\theoremstyle{plain}
\newtheorem{theorem}{Theorem}[section]
\newtheorem{lemma}{Lemma}[section]
\newtheorem{proposition}{Proposition}[section]
\newtheorem{corollary}{Corollary}[section]
\newtheorem{definition}{Definition}[section]
\theoremstyle{definition}
\newtheorem{example}{Example}[section]
\newtheorem{remark}{Remark}[section]
\newcommand{\keywords}{\textbf{Key words. }\medskip}
\newcommand{\subjclass}{\textbf{MSC 2010. }\medskip}
\renewcommand{\abstract}{\textbf{Abstract. }\medskip}
\numberwithin{equation}{section}
\begin{document}

 \title{\Large{\textbf{$p$-Hyperbolic Zolotarev Functions in \\Boundary Value Problems for a $p\,$th order Differential Operator}}}

\author{\textbf{M.F. Bessmertny\u{\i}}\thanks{Department of Physics, V. N. Karazin Kharkov National University, 4 Svobody Sq, Kharkov, 61077, Ukraine.}
\and \textbf{V.A. Zolotarev}\thanks{B. Verkin Institute for Low Temperature Physics and Engineering of the National Academy of Sciences of Ukraine, 47 Nauky Ave., Kharkiv, 61103, Ukraine; Department of Higher Mathematics and Informatics,V. N. Karazin Kharkov National University, 4 Svobody Sq, Kharkov, 61077, Ukraine.}}

%\shorttitle{Short paper title for the headers}

%\shortauthor{M.F. Bessmertny\u{\i}}, V.A. Zolotarev}

\date{}

\maketitle

\begin{abstract}
 For the self-adjoint operator of the $p$th derivative, a system of fundamental solutions is constructed. This system is analogues to the classical system of sines and cosines. The properties of such functions are studied. Classes of self-adjoint boundary conditions are described. For the operator of the third derivative, the resolvent is calculated and an orthonormal basis of eigenfunctions is given.
\end{abstract}
\medskip

\subjclass{34L10, 33B99}

\keywords{boundary value problem, fundamental  system of solutions, self-adjointness, operator resolvent.}

    \section{\large{Introduction}}\label{s:1}

  The classical harmonic Fourier analysis is largely formed and developed by the spectral theory of the second-order differential operator (\cite{uj01} -- \cite{uj04}).  Mutually enriching connections of these two directions contributed to the development of elaborate mathematical apparatus. Fourier analysis serves as an important research tool in these (and not only these) areas of analysis. For differential operators of higher orders, this technique has not been developed in its entirety, only for some particular cases \cite{uj05}.

  Standard trigonometric functions are eigenfunctions of the simplest second-order differential operator. As far as we know, analogues of trigonometric and hyperbolic functions for operators of higher orders have not been considered before.

  In this article, for the $p\,$th order operator, a system of $p$ independent functions is proposed, which coincides with the system of sine and cosine for the case $p=2$. In addition, classes of self-adjoint boundary conditions are described in terms of such functions. For the operator of the third derivative, the resolvent is calculated and the orthonormal basis of eigenfunctions is presented. Note that the use of these functions for $p=3$ makes it possible to solve both the inverse spectral problem and the inverse scattering problem for a non-local potential \cite{uj06,uj07}.

   Importance of studying differential operators of order $p>2$ is determined, for example, by the fact that the search for $L-A$ pairs for Camassa-Holm and Degasperis-Procesi non-linear equations  (\cite{uj08}--\cite{uj12}) leads to a third-order operator $L$ (cubic string).

  In construction of $p$-hyperbolic ($p$-trigonometric) functions, the group of $p\,$th roots of unity plays an essential role. This group partitions the complex plane into $p$ sectors. The value of any $p$-function at the points of any sector is completely determined by its values at the points of one of the sectors.

  The article is organized as follows.

  In Section \ref{s:1}, the classical concept of even (odd) functions is generalized to the case $p>2$. In Proposition \ref{pro2.2}, the uniqueness of the decomposition of a complex-valued function into $p$ of $k_{p}$-even components is proved.

  $p$-hyperbolic functions are introduced in Section \ref{s:3} as a fundamental system of solutions to the simplest differential equation of $p\,$th order. The main relations between $p$-hyperbolic functions are given in Section \ref{s:4}. As an example, formulas for the case $p=3$ are given.

  For the case $p=3$, it is proved in section \ref{s:5} (Proposition \ref{pro5.1}) that the zeros of $3$-hyperbolic functions are located on the bisectors of each of the three sectors.

  Section \ref{s:6} gives a general solution to the inhomogeneous Cauchy problem.

  Section \ref{s:7} describes the classes of self-adjoint boundary conditions for the $p\,$th order differential operation. Note that in the even case $(p=2k)$ there exist separated boundary conditions, and in the odd case $(p=2k+1)$ there are no such boundary conditions.

  As an example of the use of $p$-functions,  an orthonormal basis is constructed in Section \ref{s:8} from the eigenvectors of a $3$rd order self-adjoint operator. In Section \ref{s:9}, its resolvent is calculated.

    \section{\large{$p\,$th roots of unity. $k_{p}$-even functions}}\label{s:2}

  Let $p\in \mathbb{N}$ and
      \begin{equation}\label{eq2.1}
      \zeta_{0}=1,\;\zeta_{1}=e^{i\frac{2\pi}{p}},\,\ldots\,,\zeta_{p-1}=e^{i\frac{2\pi(p-1)}{p}}
      \end{equation}
  be all the $p\,$th roots of unity.

  The multiplication of every complex number $z\in\mathbb{C}$ by
  the number $\zeta_{m}$ is the rotation of the complex plane around the origin through the angle $2\pi m/p$. The set of such rotations forms a group denoted by $G_{p}$.
  The numbers (\ref{eq2.1}) form a commutative group with multiplication as a group operation.
  We will identify this group with the group $G_{p}$.

  $G_{p}$ is a cyclic group. Each element of the group $G_{p}$ is a power of the element $\zeta_{1}$. If the number $p$ is prime, the set of powers $\zeta_{m}^{k}$, $k=0,1,\ldots,p-1$ of any element $\zeta_{m}\neq \zeta_{0}=1$ of the group $G_{p}$ is a permutation of the elements of the group.
  \medskip

  \noindent
  Note that $p\,$th roots of unity  satisfy the relations
    \begin{equation}\label{eq2.2}
    \zeta_{0}^{n}+\zeta_{1}^{n}+\cdots+\zeta_{p-1}^{n}=0, \quad\text{for every}\;n\in\mathbb{Z},\; n\neq 0\, (\text{mod}\,p),
    \end{equation}
    \begin{equation}\label{eq2.3}
    \zeta_{j}\zeta_{m}=\zeta_{j+m}\quad\text{where}\quad j+m:=(j+m)(\text{mod}\,p),
    \end{equation}
    \begin{equation}\label{eq2.4}
    \overline{\zeta}_{j}=\frac{1}{\zeta_{j}}=\zeta_{p-j}.
    \end{equation}
  Moreover, from (\ref{eq2.2}), (\ref{eq2.3}) we get
    \begin{equation}\label{eq2.5}
    \zeta_{m}^{0}+\zeta_{m}^{1}+\zeta_{m}^{2}+\cdots+\zeta_{m}^{p-1}=
      \begin{cases}
      p, & \text{if $m=0\,(\text{mod}\,p)$},\\
      0, & \text{if $m\neq0\,(\text{mod}\,p)$}.
      \end{cases}
    \end{equation}
  \medskip

  Let $X$ be some set and $Y$ be a linear space. Suppose $G$ is the group of transformations of the set $X$, and
     \begin{equation}\label{eq2.6}
     g\mapsto T_{g},\quad T(g_{1}g_{2})=T(g_{1})T(g_{2}),\quad \text{for any}\quad g_{1},g_{2}\in G
     \end{equation}
  is its representation by linear operators $T_{g}:Y\rightarrow Y$.
  \medskip

  The function $f:X\rightarrow Y$ is called \emph{invariant with respect to the representation} (\ref{eq2.6}) if
         $$
         T_{g}^{-1}f(g(x))=f(x)\quad\text{or}\quad f(g(x))=T_{g}f(x)\quad \text{for every}\quad g\in G.
         $$

    \begin{example}\label{exa1.1}
    Suppose $X=Y=\mathbb{R}$, and the two-element group $G=\{g_{1},\,g_{2}\}=\{1,\,-1\}$ is generated by the mirror reflection of the real axis $\mathbb{R}$ about the origin.
    Group $G$ has two linear representations in linear space $\mathbb{R}^{1}$:

     (i)  \emph{trivial} $g_{1}\mapsto 1,\, g_{2}\mapsto 1$,

     (ii) the \emph{identity} representation $g_{1}\mapsto 1,\,g_{2}\mapsto -1$.

    \noindent Functions on $\mathbb{R}$ that are invariant with respect to the representation (i) are called \emph{even},
    and those invariant with respect to the representation (ii) are called \emph{odd}. So the same group can generate several concepts of an invariant function.
    \end{example}

  \begin{proposition}\label{pro2.1}
  Let $p\in\mathbb{N}$ and $\zeta_{m}=e^{2\pi im/p}$, $m=0,1,\ldots,p-1$, be $p\,$th roots of unity. If $k\in\mathbb{N}$, then the map $\zeta_{m}\mapsto T_{p}^{k}(\zeta_{m}):\mathbb{C}\rightarrow \mathbb{C}$, where
     \begin{equation}\label{eq2.7}
     T_{p}^{k}(\zeta_{m})z=\zeta_{m}^{k}z,\quad z\in\mathbb{C},
     \end{equation}
  is a representation of the group $G_{p}$ in the linear space $\mathbb{C}^{1}$. For $k=0$, representation \emph{(\ref{eq2.7})} is trivial: $T_{p}^{0}(\zeta_{m})z\equiv z$.
  \end{proposition}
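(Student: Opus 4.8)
The plan is to verify directly the two defining features of a representation as set out in \eqref{eq2.6}: that each $T_p^k(\zeta_m)$ is a linear operator on $\mathbb{C}^1$, and that the assignment $\zeta_m \mapsto T_p^k(\zeta_m)$ respects the group multiplication. Since $\mathbb{C}^1$ is one-dimensional and $T_p^k(\zeta_m)$ acts by multiplication by the fixed scalar $\zeta_m^k$, linearity is immediate: $T_p^k(\zeta_m)(az+bw) = \zeta_m^k(az+bw) = a\,T_p^k(\zeta_m)z + b\,T_p^k(\zeta_m)w$ for all $a,b,z,w \in \mathbb{C}$.

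Next I would establish the homomorphism identity $T_p^k(\zeta_j \zeta_m) = T_p^k(\zeta_j)\,T_p^k(\zeta_m)$. Composing the two operators gives $T_p^k(\zeta_j)\left(T_p^k(\zeta_m)z\right) = \zeta_j^k(\zeta_m^k z) = (\zeta_j\zeta_m)^k z$, where I use associativity and commutativity of multiplication in $\mathbb{C}$ to gather the scalar factors. On the other hand, by \eqref{eq2.3} the product $\zeta_j \zeta_m$ equals $\zeta_{j+m}$ (the index read mod $p$), so the operator attached to the product acts by $T_p^k(\zeta_{j+m})z = \zeta_{j+m}^k z$. The two results agree because $(\zeta_j\zeta_m)^k = \zeta_{j+m}^k$.

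The single point deserving a word of care is the consistency of the index reduction mod $p$ in \eqref{eq2.3}: I would observe that $\zeta_\ell = e^{2\pi i\ell/p}$ is periodic in $\ell$ with period $p$, so that $\zeta_{j+m}$ and $\zeta_{(j+m)\bmod p}$ denote the same root of unity and the computation above is unambiguous. Finally, for $k=0$ one has $T_p^0(\zeta_m)z = \zeta_m^0 z = z$ for every $m$, which is precisely the trivial representation. No genuine obstacle arises: the assertion amounts to the remark that $\zeta_m \mapsto \zeta_m^k$ is a homomorphism from $G_p$ into the multiplicative group $\mathbb{C}^\times$, and the verification is entirely direct.
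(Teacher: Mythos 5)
Your proposal is correct and follows the same route as the paper: the paper's proof simply notes that $T_{p}^{k}(\zeta_{m}\zeta_{n})=T_{p}^{k}(\zeta_{m})T_{p}^{k}(\zeta_{n})$ is clear, which is exactly the homomorphism identity you verify in detail. You merely spell out the scalar computation (and the linearity and $k=0$ cases) that the paper leaves implicit.
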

  \begin{proof}
  It's clear that $T_{p}^{k}(\zeta_{m}\zeta_{n})=T_{p}^{k}(\zeta_{m})T_{p}^{k}(\zeta_{n})$.
  \end{proof}

  A set $\Omega\subseteq\mathbb{C}$ is called \emph{$G_{p}$-invariant} if  $z\in\Omega$ implies $\zeta_{m}z\in\Omega$, for every $\zeta_{m}\in G_{p}$ and for every $z\in \Omega$. In particular, the complex plane $\mathbb{C}$ is a $G_{p}$-invariant set.

  \begin{definition}\label{def2.1}
     \emph{A complex-valued function $w=f(z)$ defined on a $G_{p}$-invariant set $\Omega$ is called $k_{p}$\emph{-even},\quad $k=0,1,\ldots,(p-1)$
     if it is invariant with respect to the representation $T_{p}^{k}$ (\ref{eq2.7}) of the group $G_{p}$, i. e.,}
        \begin{equation}\label{eq2.8}
           f(\zeta_{m}z)=\zeta_{m}^{k}f(z),\quad \text{\emph{for}}\quad m=0,1,\ldots,(p-1).
        \end{equation}
  \end{definition}

  \begin{remark}\label{rem2.1}
   For $p=2$, the introduced concept of a $k_{p}$-even function coincides with the standard one (see Example \ref{exa1.1}).
   \end{remark}

  We need a lemma.
  \begin{lemma}\label{lem2.1}
  Let $\zeta_{m}$ $(m=0,1,\ldots,p-1)$ be the $p\,$th roots of unity, where $p\in\mathbb{N}$. Then
    \begin{equation}\label{eq2.9}
    U=\frac{1}{\sqrt p}
      \begin{pmatrix}
          1           &     1       &        1        & \cdots &        1      \\
          1           &  \zeta_{1}  & \zeta_{1}^{2}   & \cdots & \zeta_{1}^{p-1}\\
          1           &  \zeta_{2}  & \zeta_{2}^{2}   & \cdots & \zeta_{2}^{p-1}\\
         \vdots       &   \vdots    &     \vdots      & \ddots &     \vdots     \\
          1           & \zeta_{p-1} & \zeta_{p-1}^{2} & \cdots & \zeta_{p-1}^{p-1}
      \end{pmatrix}
    \end{equation}
  is a unitary matrix, i. e., $UU^{\ast}=I$.
  \end{lemma}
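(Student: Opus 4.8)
The plan is to compute the entries of the product $UU^{\ast}$ directly and show that it equals the identity by invoking the summation formula (\ref{eq2.5}). Writing the matrix (\ref{eq2.9}) entrywise as $U_{jk}=\frac{1}{\sqrt p}\,\zeta_{j}^{\,k}$ for $j,k=0,1,\ldots,p-1$ (row index $j$, column index $k$), the $(j,l)$ entry of the Hermitian adjoint is $\bigl(U^{\ast}\bigr)_{kl}=\overline{U_{lk}}=\frac{1}{\sqrt p}\,\overline{\zeta_{l}^{\,k}}$. Hence
\begin{equation*}
\bigl(UU^{\ast}\bigr)_{jl}=\sum_{k=0}^{p-1}U_{jk}\bigl(U^{\ast}\bigr)_{kl}
=\frac{1}{p}\sum_{k=0}^{p-1}\zeta_{j}^{\,k}\,\overline{\zeta_{l}^{\,k}}.
\end{equation*}

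The key algebraic step is to collapse the two powers into a single root of unity using the group relations already recorded in the excerpt. By (\ref{eq2.4}) we have $\overline{\zeta_{l}}=\zeta_{p-l}$, so $\overline{\zeta_{l}^{\,k}}=\zeta_{p-l}^{\,k}$, and by (\ref{eq2.3}) the product satisfies $\zeta_{j}^{\,k}\zeta_{p-l}^{\,k}=(\zeta_{j}\zeta_{p-l})^{k}=\zeta_{j-l}^{\,k}$, where $j-l$ is understood modulo $p$. Therefore
\begin{equation*}
\bigl(UU^{\ast}\bigr)_{jl}=\frac{1}{p}\sum_{k=0}^{p-1}\zeta_{j-l}^{\,k}.
\end{equation*}

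Now I would apply (\ref{eq2.5}) with $m=j-l$: the geometric sum equals $p$ when $j-l\equiv 0\,(\mathrm{mod}\,p)$, that is, when $j=l$, and equals $0$ otherwise. This yields $\bigl(UU^{\ast}\bigr)_{jl}=\delta_{jl}$, which is exactly $UU^{\ast}=I$, completing the proof.

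I do not expect a genuine obstacle here: the result is the standard unitarity of the (normalized) discrete Fourier matrix, and every ingredient has been prepared in advance. The only point demanding care is bookkeeping with the indices modulo $p$ — making sure that the passage $\overline{\zeta_{l}^{\,k}}=\zeta_{p-l}^{\,k}$ and the reindexing $\zeta_{j}\zeta_{p-l}=\zeta_{j-l}$ are justified by (\ref{eq2.3}) and (\ref{eq2.4}) rather than asserted, and that the case $j=l$ (where the exponent is a multiple of $p$) is separated out correctly when (\ref{eq2.5}) is invoked. Since $U^{\ast}U=I$ follows by the identical computation with the roles of $j$ and $l$ interchanged (or simply because a square matrix with $UU^{\ast}=I$ is automatically unitary), no additional work is needed.
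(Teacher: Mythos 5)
Your proof is correct and takes essentially the same approach as the paper: both compute the entries of $UU^{\ast}$ directly, collapse $\zeta_{j}^{k}\overline{\zeta_{l}^{k}}$ into a single power $\zeta_{j-l}^{k}$ via (\ref{eq2.3}) and (\ref{eq2.4}), and then evaluate the resulting geometric sum by (\ref{eq2.5}), separating the diagonal case from the off-diagonal one. No gaps; your index bookkeeping is exactly the paper's argument written in explicit entrywise notation.
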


  \begin{proof}
  We have
    $$
    1+\zeta_{j}\overline{\zeta}_{j}+\cdots+\zeta_{j}^{p-1}\overline{\zeta}_{j}^{p-1}=p,\quad\text{for}
    \quad j=0,1,\ldots,p-1,
    $$
  Since  $\overline{\zeta}_{j}=\zeta_{p-j}$, where $p-j:=(p-j)(\text{mod}\,p)$, then from (\ref{eq2.3}), (\ref{eq2.4}) and (\ref{eq2.5}) we obtain
    \begin{multline}\nonumber
    1+\zeta_{k}\overline{\zeta}_{j}+\cdots+\zeta_{k}^{p-1}\overline{\zeta}_{j}^{p-1}=
    1+\zeta_{k}\zeta_{p-j}+\cdots+\zeta_{k}^{p-1}\zeta_{p-j}^{p-1}=\\
     1+\zeta_{k+p-j}+\cdots+\zeta_{k+p-j}^{p-1}=0\quad\text{for}\quad k\neq j.\quad
    \end{multline}
  This implies $UU^{\ast}=I$.
  \end{proof}

  \begin{proposition}\label{pro2.2}
  Let $f(z)$ be a complex-valued function defined on a $G_{p}$-invariant set $\Omega\subseteq\mathbb{C}$. Then $f(z)$ is uniquely representable as
    \begin{equation}\label{eq2.10}
    f(z)=f_{0}(z)+f_{1}(z)+\cdots+f_{p-1}(z),
    \end{equation}
   where
    \begin{equation}\label{eq2.11}
    f_{k}(z)=\frac{1}{p}\sum_{m=0}^{p-1}\frac{1}{\zeta_{m}^{k}}f(\zeta_{m}z)
    \end{equation}
   is a $k_{p}$-even function, $k=0,1,\ldots,p-1$.
  \end{proposition}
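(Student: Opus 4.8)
The plan is to establish the two halves of the statement—existence of the decomposition (\ref{eq2.10})--(\ref{eq2.11}) and its uniqueness—separately, using the orthogonality relations (\ref{eq2.2})--(\ref{eq2.5}) for existence and the unitarity of the matrix $U$ from Lemma \ref{lem2.1} for uniqueness. The entire argument is really an instance of the discrete Fourier transform attached to the cyclic group $G_p$, so the only genuine content is bookkeeping with index arithmetic modulo $p$ together with the invertibility supplied by Lemma \ref{lem2.1}.

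For existence I would first verify that each $f_k$ defined by (\ref{eq2.11}) is genuinely $k_p$-even. Replacing $z$ by $\zeta_n z$ and using the product rule (\ref{eq2.3}) gives $f_k(\zeta_n z)=\frac{1}{p}\sum_{m=0}^{p-1}\zeta_m^{-k}f(\zeta_{m+n}z)$. Reindexing the sum by $l=m+n\ (\mathrm{mod}\,p)$ and writing $\zeta_m^{-k}=\zeta_l^{-k}\zeta_n^{k}$ (which follows from $\zeta_m=\zeta_l\zeta_n^{-1}$ via (\ref{eq2.3})--(\ref{eq2.4})) pulls the factor $\zeta_n^{k}$ outside, so that $f_k(\zeta_n z)=\zeta_n^{k}f_k(z)$, which is exactly (\ref{eq2.8}). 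Next, summing (\ref{eq2.11}) over $k$ and interchanging the order of summation, I would invoke the geometric-sum identity (\ref{eq2.5}) applied to $\zeta_m^{-1}$: the inner sum $\sum_{k=0}^{p-1}\zeta_m^{-k}$ equals $p$ when $m=0\,(\mathrm{mod}\,p)$ and vanishes otherwise, so only the $m=0$ term survives and leaves $f(\zeta_0 z)=f(z)$. This proves (\ref{eq2.10}).

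The uniqueness is where Lemma \ref{lem2.1} does the real work, and it is the step I expect to be the crux. Suppose $f=\sum_{k=0}^{p-1}g_k$ with each $g_k$ being $k_p$-even. Evaluating at $\zeta_n z$ and using $g_k(\zeta_n z)=\zeta_n^{k}g_k(z)$ yields, for $n=0,1,\ldots,p-1$, the linear system $f(\zeta_n z)=\sum_{k=0}^{p-1}\zeta_n^{k}g_k(z)$. In matrix form this reads $\big(f(\zeta_n z)\big)_n=\sqrt{p}\,U\big(g_k(z)\big)_k$, since the coefficient matrix with $(n,k)$ entry $\zeta_n^{k}$ is precisely $\sqrt{p}\,U$ for the matrix $U$ of (\ref{eq2.9}). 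Because $U$ is unitary by Lemma \ref{lem2.1}, we have $U^{-1}=U^{\ast}$, so $\sqrt{p}\,U$ is invertible with inverse $\frac{1}{\sqrt p}U^{\ast}$, whose $(k,m)$ entry is $\frac{1}{p}\zeta_m^{-k}$. Solving the system therefore forces $g_k(z)=\frac{1}{p}\sum_{m=0}^{p-1}\zeta_m^{-k}f(\zeta_m z)$, i.e. $g_k=f_k$, which proves uniqueness. I would note that this last computation simultaneously gives an independent derivation of the formula (\ref{eq2.11}), so one could even organize the whole proof around inverting $\sqrt{p}\,U$ and obtain both existence and uniqueness at once.
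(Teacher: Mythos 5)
Your proof is correct and takes essentially the same route as the paper: the existence half (verifying that each $f_k$ from (\ref{eq2.11}) is $k_p$-even via the reindexing $\zeta_m\mapsto\zeta_{m+n}$, then summing over $k$ and using (\ref{eq2.5})) is identical, and the uniqueness half rests on the same key fact, the invertibility of the matrix $\sqrt{p}\,U$ supplied by Lemma \ref{lem2.1}. The only cosmetic difference is in how that invertibility is deployed: the paper subtracts two hypothetical decompositions and observes that the resulting homogeneous system with nonsingular coefficient matrix forces $h_k\equiv 0$, whereas you invert $\sqrt{p}\,U$ explicitly via $U^{-1}=U^{\ast}$ and solve for the components, which has the small bonus of re-deriving formula (\ref{eq2.11}) rather than assuming it.
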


  \begin{proof}
  We have
    $$
    f_{k}(\zeta_{j}z)=\frac{1}{p}\sum_{m=0}^{p-1}\frac{1}{\zeta_{m}^{k}}f(\zeta_{m}\zeta_{j}z)=
     \zeta_{j}^{k}\frac{1}{p}\sum_{m=0}^{p-1}\frac{1}{\zeta_{m+j}^{k}}f(\zeta_{m+j}z)=
     \zeta_{j}^{k}f_{k}(z).
    $$
  Moreover,
    \begin{equation}\nonumber
    \sum_{k=0}^{p-1}f_{k}(z)=\frac{1}{p}\sum_{k=0}^{p-1}
    \sum_{m=0}^{p-1}\frac{1}{\zeta_{m}^{k}}f(\zeta_{m}z)=
     \frac{1}{p}
    \sum_{m=0}^{p-1}\left(\sum_{k=0}^{p-1}\frac{1}{\zeta_{m}^{k}}\right)f(\zeta_{m}z).
    \end{equation}
  From (\ref{eq2.4}), (\ref{eq2.5}) we get
    $$
    \sum_{k=0}^{p-1}\frac{1}{\zeta_{m}^{k}}=\sum_{k=0}^{p-1}\overline{\zeta}_{m}^{k}=
      \begin{cases}
      p,& \text{if $m=0$,}\\
      0, & \text{if $m\neq0$.}
      \end{cases}
    $$
  Therefore,
    $$
    \sum_{k=0}^{p-1}f_{k}(z)=f(z).
    $$

  Next, prove the uniqueness. Suppose
    $$
    \sum_{k=0}^{p-1}f_{k}(z)=\sum_{k=0}^{p-1}g_{k}(z),
    $$
  where $f_{k}(z)$ and $g_{k}(z)$ are $k_{p}$-even functions. Then $k_{p}$-even functions $h_{k}(z)=f_{k}(z)-g_{k}(z)$ $\left(h_{k}(\zeta_{j}z\right)=\zeta_{j}^{k}h_{k}(z))$ satisfy the system of equations
    $$
    \begin{cases}
    h_{0}(z)+h_{1}(z)+h_{2}(z)+\cdots+h_{p-1}(z)=0\qquad\qquad\qquad\\
    h_{0}(z)+\zeta_{1}h_{1}(z)+\zeta_{1}^{2}h_{2}(z)+\cdots+\zeta_{1}^{p-1}h_{p-1}(z)=0\quad\quad\\
    \cdots\quad\cdots\quad\cdots\\
    h_{0}(z)+\zeta_{p-1}h_{1}(z)+\zeta_{p-1}^{2}h_{2}(z)+\cdots+\zeta_{p-1}^{p-1}h_{p-1}(z)=0
    \end{cases}
    $$
  By Lemma \ref{lem2.1}, its matrix of coefficients is nonsingular. Then
  $h_{k}(z)\equiv 0$, $k=0,1,\ldots,p-1$.
  \end{proof}

   \begin{corollary}\label{cor2.1}
   Let $p\in\mathbb{N}$. If a $k_{p}$-even function $f_{k}(z)$ is analytic in a neighborhood of zero, then its Taylor series has the form
               \begin{equation}\label{eq2.12}
               f_{k}(z)=\sum_{n=0}^{\infty}a_{n}z^{k+np}.
               \end{equation}
   \end{corollary}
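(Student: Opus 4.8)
The plan is to pass from the functional equation $f_k(\zeta_m z)=\zeta_m^k f_k(z)$ defining $k_p$-evenness to a constraint on the Taylor coefficients, read off index by index. First I would invoke analyticity: since $f_k$ is analytic in a neighborhood of the origin, there is a disk $\{|z|<r\}$ — automatically $G_p$-invariant, since it is centered at $0$ — on which $f_k$ is represented by a convergent power series
\begin{equation}\nonumber
f_k(z)=\sum_{j=0}^{\infty}c_j z^{j}.
\end{equation}

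Next I would substitute this series into the defining relation (\ref{eq2.8}), specialized to the generator $\zeta_1=e^{2\pi i/p}$. The left-hand side becomes $f_k(\zeta_1 z)=\sum_{j\ge 0}c_j\zeta_1^{\,j}z^{j}$, while the right-hand side is $\zeta_1^{k}f_k(z)=\sum_{j\ge 0}c_j\zeta_1^{k}z^{j}$. Both are power series converging on the same disk, so by the uniqueness of Taylor coefficients one may equate them termwise, obtaining $c_j\zeta_1^{\,j}=c_j\zeta_1^{k}$, i.e.
\begin{equation}\nonumber
c_j\bigl(\zeta_1^{\,j-k}-1\bigr)=0,\qquad j=0,1,2,\ldots.
\end{equation}

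The decisive step is then a divisibility argument. Because $\zeta_1=e^{2\pi i/p}$ is a \emph{primitive} $p$th root of unity, the equality $\zeta_1^{\,j-k}=1$ holds if and only if $p\mid (j-k)$. Hence $c_j=0$ for every index $j$ with $j\not\equiv k\,(\mathrm{mod}\,p)$, and the only possibly nonzero coefficients occur at $j\equiv k\,(\mathrm{mod}\,p)$. Since $0\le k\le p-1$ and $j\ge 0$, these surviving indices are exactly $j=k+np$ with $n=0,1,2,\ldots$. Relabelling $a_n:=c_{k+np}$ gives precisely the asserted form (\ref{eq2.12}).

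I expect no genuine obstacle here: the argument is a direct coefficient comparison, and it suffices to test (\ref{eq2.8}) on the single generator $\zeta_1$ rather than on all $\zeta_m$, since the relations for the remaining roots follow automatically. The only point requiring care is the justification that $\zeta_1^{\,j-k}=1$ forces $p\mid (j-k)$, which rests on the primitivity of $\zeta_1$ and is valid for all $p\in\mathbb{N}$, independently of whether $p$ is prime.
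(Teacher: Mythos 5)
Your proof is correct, but it takes a different route from the paper's. The paper's one-line proof reuses the machinery of Section \ref{s:2}: it applies the projection formula (\ref{eq2.11}) to the Taylor series of $f_{k}$, so that the averaging $\frac{1}{p}\sum_{m}\zeta_{m}^{j-k}$ (the root-of-unity filter from (\ref{eq2.5})) annihilates every monomial $z^{j}$ with $j\not\equiv k\ (\mathrm{mod}\,p)$, and then invokes the uniqueness of the decomposition (\ref{eq2.10}) established in Proposition \ref{pro2.2} (which itself rests on Lemma \ref{lem2.1}). You instead work directly with the defining relation (\ref{eq2.8}) tested at the single generator $\zeta_{1}$, compare Taylor coefficients termwise, and conclude $c_{j}=0$ unless $p\mid(j-k)$ from the primitivity of $\zeta_{1}$. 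Your argument is more elementary and self-contained: it needs neither Proposition \ref{pro2.2} nor the unitarity of the matrix (\ref{eq2.9}), only uniqueness of Taylor coefficients; and your observations that a disk centered at $0$ is automatically $G_{p}$-invariant and that the relations for all $\zeta_{m}$ follow from the one for $\zeta_{1}$ are both correct and worth making explicit. What the paper's approach buys in exchange is economy within its own framework — having already built the group-averaging decomposition, the corollary falls out as an immediate specialization — whereas your proof would stand on its own even if Section \ref{s:2} were absent.
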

   \begin{proof}
   Due to the uniqueness of the decomposition (\ref{eq2.10}), it is sufficient to apply the formula (\ref{eq2.11}) to the Taylor series of the function $f_{k}(z)$.
   \end{proof}

   \begin{proposition}\label{pro2.3}
   Let $f(z)$ be a differentiable $k_{p}$-even function. Then its derivative
      $$
      \frac{df}{dz}(z)
      $$
   is a $(k-1)_{p}$-even function.
   \end{proposition}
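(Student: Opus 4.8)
The plan is to differentiate the defining homogeneity relation of a $k_p$-even function directly, since the claim is just the infinitesimal version of the scaling identity (\ref{eq2.8}). Fix an index $m\in\{0,1,\ldots,p-1\}$. By Definition \ref{def2.1}, the hypothesis gives $f(\zeta_m z)=\zeta_m^k f(z)$ on the $G_p$-invariant set $\Omega$, and since $f$ is assumed differentiable, I would regard both sides as functions of $z$ and take $\frac{d}{dz}$.

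On the left-hand side the chain rule applies to the composition with the linear map $z\mapsto\zeta_m z$, whose derivative is the constant $\zeta_m$, so that $\frac{d}{dz}f(\zeta_m z)=\zeta_m\,f'(\zeta_m z)$. On the right-hand side $\zeta_m^k$ is a constant multiplier, giving $\frac{d}{dz}\bigl[\zeta_m^k f(z)\bigr]=\zeta_m^k f'(z)$. Equating the two expressions and cancelling the nonzero factor $\zeta_m$ yields
$$
f'(\zeta_m z)=\zeta_m^{k-1}f'(z),
$$
which is precisely the $(k-1)_p$-even relation (\ref{eq2.8}) for $f'$, and the argument holds verbatim for every $m$.

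The only point worth a word of care, rather than a genuine obstacle, is the bookkeeping of the index when $k=0$, where $k-1=-1$ falls outside the range $\{0,\ldots,p-1\}$. Here one reads the exponent modulo $p$: by (\ref{eq2.4}) we have $\zeta_m^{-1}=\zeta_m^{p-1}$, so the derivative of a $0_p$-even function is $(p-1)_p$-even, and with this convention the computation above requires no modification.
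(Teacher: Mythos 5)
Your proof is correct and is essentially identical to the paper's: both differentiate the defining relation $f(\zeta_m z)=\zeta_m^k f(z)$ and apply the chain rule to the linear map $z\mapsto\zeta_m z$, cancelling one factor of $\zeta_m$ to obtain $f'(\zeta_m z)=\zeta_m^{k-1}f'(z)$. Your extra remark that for $k=0$ the exponent $-1$ is read modulo $p$ as $p-1$ is a sensible clarification the paper leaves implicit.
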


   \begin{proof}
   It is given that $f(z)=(1/\zeta_{m}^{k})f(\zeta_{m}z)$. Then
     $$
     \frac{df}{dz}(z)=\frac{1}{\zeta_{m}^{k}}\frac{df}{dz}(\zeta_{m}z)\zeta_{m}=
       \frac{1}{\zeta_{m}^{k-1}}\frac{df}{dz}(\zeta_{m}z).
     $$
     \end{proof}

   \section{\large{Fundamental systems of solutions}}\label{s:3}

   We are interested in solutions of the differential equation
     \begin{equation}\label{eq3.1}
     \left(\frac{1}{i}D\right)^{p}y(x)=\lambda^{p}y(x)\quad (x\in\mathbb{R},\;D=d/dx,\;\lambda\in\mathbb{C}),
     \end{equation}
   where $p$ is a natural number.

   \begin{proposition}\label{pro3.1}
   Any solution of the equation \emph{(\ref{eq3.1})} has the form
     $$
     y(x)=s(i\lambda x),
     $$
   where $s(z)$ is the solution of the equation
     \begin{equation}\label{eq3.2}
     D^{p}s(z)=s(z),\quad (z\in\mathbb{C},\;D=d/dz).
     \end{equation}
   \end{proposition}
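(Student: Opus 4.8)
The plan is to establish a bijective correspondence between solutions of the two equations via the change of variable $z = i\lambda x$. Concretely, I would start by assuming that $s(z)$ solves \eqref{eq3.2}, define $y(x) := s(i\lambda x)$, and verify directly by the chain rule that $y$ solves \eqref{eq3.1}. Each differentiation in $x$ produces a factor of $i\lambda$, so that
\begin{equation}\nonumber
D^{p}_{x}\,y(x) = (i\lambda)^{p}\,(D^{p}s)(i\lambda x) = (i\lambda)^{p}\,s(i\lambda x),
\end{equation}
using \eqref{eq3.2} in the last step. Since $(i\lambda)^{p} = i^{p}\lambda^{p}$ and $(1/i)^{p} = i^{-p}$, applying the operator $(D/i)^{p}$ instead of $D^{p}$ cancels the factor $i^{p}$ and leaves exactly $\lambda^{p}y(x)$, which is the desired identity \eqref{eq3.1}.

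The substantive direction is the converse: I must show that \emph{every} solution of \eqref{eq3.1} arises this way, i.e. has the form $y(x) = s(i\lambda x)$ for some solution $s$ of \eqref{eq3.2}. For this I would invert the substitution: given a solution $y(x)$ of \eqref{eq3.1}, define $s(z) := y(z/(i\lambda))$ (at least for $\lambda \neq 0$), and run the same chain-rule computation in reverse to check that $s$ solves \eqref{eq3.2}. The composition $s(i\lambda x) = y(i\lambda x/(i\lambda)) = y(x)$ recovers the original solution, so the correspondence $s \mapsto s(i\lambda x)$ is onto the solution space of \eqref{eq3.1}.

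The main obstacle I anticipate is the degenerate case $\lambda = 0$, where the change of variable $z = i\lambda x$ collapses and the division by $i\lambda$ is illegitimate. When $\lambda = 0$, equation \eqref{eq3.1} reduces to $D^{p}y = 0$, whose solutions are the polynomials of degree at most $p-1$; one must check separately that these are still captured by the stated form, or else note that the proposition is implicitly read for $\lambda \neq 0$ (or interpreted as a statement about the structure of the general solution, letting $\lambda \to 0$ absorb the polynomial solutions as a limiting case). I would handle this by treating $\lambda \neq 0$ as the generic case through the explicit substitution and either remarking on $\lambda = 0$ separately or observing that both equations are linear of order $p$, so matching the $p$-dimensional solution spaces under the (invertible, for $\lambda \neq 0$) affine change of variable already forces the correspondence to be a bijection. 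Aside from this boundary case, the argument is entirely a routine application of the chain rule and requires no deeper input.
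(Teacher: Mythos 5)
Your proposal is correct and takes essentially the same route as the paper: the substitution $z=i\lambda x$ together with the chain rule, run in both directions. The paper's own proof carries out exactly this computation (starting from a solution $y$ of (\ref{eq3.1}), setting $s(z)=y(x)$, and deducing (\ref{eq3.2}), with the converse declared obvious), and it likewise implicitly assumes $\lambda\neq 0$ --- the degenerate case you flag is silently glossed over there, so your explicit handling of it is a minor improvement rather than a different argument.
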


   \begin{proof}
   Introducing a new function $s(z)=y(x)$, where $z=i\lambda x$, we obtain
     \begin{multline}\nonumber
     \lambda^{p}s(z)=\lambda^{p}y(x)=(-i)^{p}\frac{d^{p}}{dx^{p}}y(x)=(-i)^{p}\frac{d^{p}}{dx^{p}}s(z)=\\
     (-i)^{p}\frac{d^{p}}{dz^{p}}s(z)\left(\frac{dz}{dx}\right)^{p}=
       (-i)^{p}i^{p}\lambda^{p}\frac{d^{p}}{dz^{p}}s(z)=\lambda^{p}\frac{d^{p}}{dz^{p}}s(z).
     \end{multline}
   This implies (\ref{eq3.2}). It is obvious that (\ref{eq3.2}) implies (\ref{eq3.1}).
   \end{proof}

   The function $s(z)=e^{z}$ is the only solution (up to a multiplicative constant) of equations (\ref{eq3.2}) for all $p\in\mathbb{N}$. For each fixed $p\in\mathbb{N}$, the solution $s(z)=e^{z}$ is called \emph{a principal solution} of the equation (\ref{eq3.2}).

   \begin{definition}\label{def3.1}
   \emph{Let $p$ be a natural number, and $\{\zeta_{k}\}_{k=0}^{p-1}$ be $p\,$th roots of unity. $k_{p}$-even components
      \begin{equation}\label{eq3.3}
      \boxed{s_{k}(z)=\frac{1}{p}\sum_{m=0}^{p-1}\frac{1}{\zeta_{m}^{k}}e^{z\zeta_{m}},\quad k=0,1,\ldots,p-1}
      \end{equation}
   of the function $e^{z}$ are called} $p$-hyperbolic functions.
   \end{definition}

     \begin{remark}\label{rem3.1}
   The name ``$p$-hyperbolic functions" is due to the fact that, for  $p=2$ ($\zeta_{0}=1$, $\zeta_{1}=-1$), we have
       $$
       s_{0}(z)=\frac{1}{2}(e^{z}+e^{-z})=\cosh z,\quad
       s_{1}(z)=\frac{1}{2}(e^{z}-e^{-z})=\sinh z.
       $$
   By analogy to this case, we call
     \begin{equation}\label{eq3.4}
     c_{k}(z)=\frac{1}{i^{k}}s_{k}(iz),\quad k=0,1,\ldots,p-1
     \end{equation}
   \emph{a $p$-trigonometric functions}. By virtue of (\ref{eq3.4}), it suffices to study only the properties of $p$-hyperbolic functions.
     \end{remark}

   Note that according to Proposition \ref{pro2.2},
    $$
    s_{0}(z)+s_{1}(z)+\cdots+s_{p-1}(z)=e^{z}.
    $$
  This identity is a generalization of Euler's formula for the case $p\neq 2$.

     \begin{proposition}\label{pro3.2}
   $p$-hyperbolic functions $s_{k}(z)$, $k=0,1,\ldots,p-1$ are entire functions of exponential type. These functions
   \smallskip

   \noindent
   \emph{(i)}  form a fundamental system of solutions of the  equation \emph{(\ref{eq3.2})},
   \smallskip

   \noindent
   \emph{(ii)} satisfy the initial data
     \begin{equation}\label{eq3.5}
     D^{n}s_{k}(z)|_{z=0}=
       \begin{cases}
       1, & \text{if $n=k$,}\\
       0, & \text{if $n\neq k$,}
       \end{cases}\quad k,n=0,1,\ldots,p-1,
     \end{equation}

   \noindent
   \emph{(iii)} have derivatives
     \begin{equation}\label{eq3.6}
     Ds_{0}(z)=s_{p-1}(z),\qquad Ds_{k}(z)=s_{k-1}(z),\quad k=1,\ldots,p-1,
     \end{equation}

   \noindent
   \emph{(iv)} are represented by the Taylor series
     \begin{equation}\label{eq3.7}
     s_{k}(z)=\sum_{n=0}^{\infty}\frac{z^{k+pn}}{(k+pn)!},\quad k=0,1,\ldots,p-1,
     \end{equation}

   \noindent
   \emph{(v)} are real for real $z$, i. e.,
      \begin{equation}\label{eq3.7}
        \overline{s_{k}(z)}=s_{k}(\overline{z}),\quad k=0,1,\ldots,p-1,
      \end{equation}

   \noindent
   \emph{(vi)}
     \begin{equation}\label{eq3.9}
     \det W[s_{k}(z)]=
       \left|\begin{array}{cccc}
           s_{0}(z)     &       s_{1}(z)   & \cdots & s_{p-1}(z)   \\
           s'_{0}(z)    &       s'_{1}(z)  & \cdots & s'_{p-1}(z)  \\
             \vdots     &       \vdots     & \ddots & \vdots       \\
       s_{0}^{(p-1)}(z) & s_{1}^{(p-1)}(z) & \cdots & s^{(p-1)}_{p-1}(z)
       \end{array}\right|\equiv 1.
     \end{equation}
   \end{proposition}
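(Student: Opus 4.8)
The plan is to read off every assertion from the defining formula (\ref{eq3.3}) together with the orthogonality relation (\ref{eq2.5}) and the group identities (\ref{eq2.3})--(\ref{eq2.4}); no step is deep, and the only care needed is that the index $k$ is always taken modulo $p$. That each $s_k$ is entire of exponential type is immediate: it is a finite linear combination of the entire functions $e^{z\zeta_m}$, and since $|\zeta_m|=1$ one has $|e^{z\zeta_m}|=e^{\operatorname{Re}(z\zeta_m)}\le e^{|z|}$, whence $|s_k(z)|\le e^{|z|}$.

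I would treat (iii) first, since the remaining parts lean on it. Differentiating (\ref{eq3.3}) under the finite sum pulls a factor $\zeta_m$ out of each term, replacing $\zeta_m^{-k}$ by $\zeta_m^{-(k-1)}$, so $Ds_k=s_{k-1}$ for $k\ge 1$. For $k=0$ the same computation yields $\frac1p\sum_m\zeta_m e^{z\zeta_m}$, and writing $\zeta_m=\zeta_m^{\,1-p}=\zeta_m^{-(p-1)}$ (valid since $\zeta_m^p=1$) identifies this with $s_{p-1}$; this is also Proposition \ref{pro2.3} applied to the $0_p$-even function $s_0$.

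Parts (ii) and (iv) both come straight from (\ref{eq2.5}). Expanding $e^{z\zeta_m}=\sum_n \zeta_m^n z^n/n!$ and exchanging the (absolutely convergent) sums gives $s_k(z)=\frac1p\sum_n \frac{z^n}{n!}\sum_m \zeta_m^{\,n-k}$; by (\ref{eq2.5}) the inner sum equals $p$ when $n\equiv k\pmod p$ and $0$ otherwise, which leaves precisely the series (\ref{eq3.7}) and proves (iv). (Corollary \ref{cor2.1} already forces the series to have this shape, so strictly only the coefficients need checking.) Reading off the coefficients of $z^0,\dots,z^{p-1}$ --- equivalently computing $D^ns_k(0)=\frac1p\sum_m\zeta_m^{\,n-k}$ directly --- gives the initial data (ii). For (i), applying $D^p$ to (\ref{eq3.3}) replaces each $e^{z\zeta_m}$ by $\zeta_m^p e^{z\zeta_m}=e^{z\zeta_m}$, so $D^p s_k=s_k$ and each $s_k$ solves (\ref{eq3.2}); linear independence follows at once from (ii), so the $p$ functions form a fundamental system. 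Part (v) is a reindexing: conjugating (\ref{eq3.3}) and using $\overline{\zeta_m}=\zeta_{p-m}$ turns the sum over $m$ into the same sum over $p-m$, a permutation of $\{0,\dots,p-1\}$, giving $\overline{s_k(z)}=s_k(\bar z)$; it is equally visible from the real coefficients in (\ref{eq3.7}).

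For (vi) I would differentiate $\det W[s_k(z)]$ row by row. Every term in which a row other than the last is differentiated produces a determinant with two equal rows (by (iii) the derivative of each such row coincides with the row below it) and so vanishes; the one surviving term differentiates the bottom row to $s_k^{(p)}=s_k$, which reproduces the top row and again gives determinant $0$. Hence $(\det W)'\equiv 0$, and its value at $z=0$ is $\det I=1$ by (ii). The only real obstacle here is organizational --- keeping the modular index arithmetic straight, especially the wrap-around $s_0\mapsto s_{p-1}$ in (iii), on which parts (i), (ii) and (vi) all ultimately depend.
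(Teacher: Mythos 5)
Your proof is correct in all parts, and for (i)--(v) it essentially fills in the same computations the paper compresses into one line (``follow from Definition \ref{def3.1} and relations (\ref{eq2.2})--(\ref{eq2.5})''). The genuine divergence is in (vi): you prove $\det W[s_k(z)]\equiv 1$ by an Abel--Liouville argument --- differentiating the determinant row by row, noting that each differentiated row duplicates an adjacent row (with the wrap-around $D^p s_k=s_k$ closing the cycle), so $(\det W)'\equiv 0$, and then evaluating at $z=0$ where (ii) gives the identity matrix. The paper instead observes that, by (iii), the Wronskian matrix is the circulant matrix in $s_0,\ldots,s_{p-1}$ and diagonalizes it as $W[s_k(z)]=U\Lambda(z)U^{\ast}$ with the unitary DFT matrix $U$ of Lemma \ref{lem2.1} and $\Lambda(z)=\mathrm{diag}\{e^{z\zeta_0},\ldots,e^{z\zeta_{p-1}}\}$, so that $\det W=\det\Lambda(z)=e^{z(\zeta_0+\cdots+\zeta_{p-1})}=1$. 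A secondary difference: you deduce linear independence in (i) from the initial data (ii), while the paper deduces it from (vi). Your route is more elementary and self-contained (it needs neither Lemma \ref{lem2.1} nor the circulant structure), but the paper's factorization is not wasted effort: the identity $W_p(z)=U\Lambda(z)U^{\ast}$ is exactly what drives Proposition \ref{pro4.1} (the group property $W_p(z_1+z_2)=W_p(z_1)W_p(z_2)$, hence the addition formulas and Euler identities), so the paper's proof of (vi) is an investment that your argument, taken alone, would not provide.
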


   \begin{proof}
   (ii), (iii) and (iv) follow from Definition \ref{def3.1} and relations (\ref{eq2.2}) -- (\ref{eq2.5}). (v) follows from (iv).

   \noindent
   (vi). Given (iii), we get
       \begin{equation}\label{eq3.10}
        W[s_{k}(z)]=
           \begin{pmatrix}
           s_{0}(z)    &       s_{1}(z)    & \cdots & s_{p-1}(z)   \\
           s_{p-1}(z)  &       s_{0}(z)    & \cdots & s_{p-2}(z)  \\
             \vdots    &       \vdots      & \ddots & \vdots       \\
           s_{1}(z)    &       s_{2}(z)    & \cdots & s_{0}(z)
           \end{pmatrix}=U\Lambda(z)U^{\ast},\qquad
       \end{equation}
   where the unitary matrix $U$ is defined by the relation (\ref{eq2.9}) and
       $$
       \Lambda(z)=\text{diag}\{e^{z\zeta_{0}},e^{z\zeta_{1}},\ldots,e^{z\zeta_{p-1}}\}.
       $$
   Since
      $$
      \det \Lambda(z)=e^{z(\zeta_{0}+\zeta_{1}+\cdots+\zeta_{p-1})}=e^{0}=1,
      $$
   then $\det W[s_{k}(z)]\equiv 1$.
   \smallskip

   \noindent
   (i). From (iii) yields $D^{p}s_{k}(z)=s_{k}(z)$, $k=0,1,\ldots,p-1$, i. e., $s_{k}(z)$ are solutions of equation (\ref{eq3.2}). (vi) implies linear independence of functions. Then $\{s_{k}(z)\}_{k=0}^{p-1}$ is the fundamental system of solutions of the equation (\ref{eq3.2}).
   \end{proof}

   \begin{corollary}\label{cor3.1}
   The function
     \begin{equation}\label{eq3.11}
     y(z)=y_{0}s_{0}(z)+y_{1}s_{1}(z)+\cdots+y_{p-1}s_{p-1}(z)
     \end{equation}
   is solution of Cauchy problem
     \begin{equation}\label{eq3.12}
     \begin{array}{c}
     D^{p}y(z)=y(z),\quad (D=d/dz)\\
     \\y(0)=y_{0},\;y'(0)=y_{1},\;\ldots\;,y^{(p-1)}(0)=y_{p-1}.
     \end{array}
     \end{equation}
   \end{corollary}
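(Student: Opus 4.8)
The plan is to verify directly that the function $y(z)$ defined by \eqref{eq3.11} satisfies both the differential equation and the initial conditions of the Cauchy problem \eqref{eq3.12}, relying on exactly two facts already established in Proposition \ref{pro3.2}: that each $s_k$ solves \eqref{eq3.2}, namely part (i), and that the $s_k$ satisfy the normalized initial data \eqref{eq3.5}, namely part (ii). Since both the equation and the initial conditions are linear in $y$, and $y$ is a finite linear combination of the $s_k$, everything reduces to checking the claim termwise.

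First I would check the equation. The operator $D^p-I$ is linear, and by part (i) of Proposition \ref{pro3.2} each summand satisfies $D^p s_k(z)=s_k(z)$. Applying $D^p$ to \eqref{eq3.11} and interchanging it with the finite sum gives $D^p y(z)=\sum_{k=0}^{p-1}y_k D^p s_k(z)=\sum_{k=0}^{p-1}y_k s_k(z)=y(z)$, so $y$ solves the differential equation.

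Next I would verify the prescribed initial data. Differentiating \eqref{eq3.11} exactly $n$ times and evaluating at $z=0$, linearity yields $y^{(n)}(0)=\sum_{k=0}^{p-1}y_k\,D^n s_k(z)\big|_{z=0}$ for each $n=0,1,\ldots,p-1$. By the normalization \eqref{eq3.5}, the inner quantity equals the Kronecker symbol $\delta_{nk}$, so the sum collapses to the single surviving term $y_n$. Thus $y(0)=y_0,\ y'(0)=y_1,\ \ldots,\ y^{(p-1)}(0)=y_{p-1}$, which is precisely the initial data in \eqref{eq3.12}.

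There is no genuine obstacle here: the statement is a direct consequence of linearity together with the biorthogonal normalization \eqref{eq3.5}, which was the nontrivial input proved earlier. If one additionally wishes to record that $y$ is \emph{the} unique solution, it follows from part (i) of Proposition \ref{pro3.2} — that $\{s_k\}_{k=0}^{p-1}$ is a fundamental system — reinforced by the nonvanishing Wronskian \eqref{eq3.9}, so that the assignment $(y_0,\ldots,y_{p-1})\mapsto y$ is a linear bijection onto the solution space of the $p$th order equation.
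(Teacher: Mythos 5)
Your proof is correct and follows exactly the route the paper intends: the paper states Corollary \ref{cor3.1} without proof precisely because it is an immediate consequence of linearity together with parts (i) and (ii) of Proposition \ref{pro3.2}, which is the argument you spell out. Your closing remark on uniqueness via the nonvanishing Wronskian \eqref{eq3.9} is a harmless (and correct) bonus beyond what the paper claims.
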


   \begin{corollary}\label{cor3.2}
   For the solution \emph{(\ref{eq3.11})} of the Cauchy problem \emph{(\ref{eq3.12})} we have
     \begin{equation}\nonumber
     \det W[y,y',\ldots,y^{(p-1)}]=
     \left|\begin{array}{cccc}
              y(z)      &  y'(z) &  \cdots & y^{(p-1)}(z) \\
          y^{(p-1)}(z)  &  y(z)  &  \cdots & y^{(p-2)}(z) \\
            \vdots      & \vdots &  \ddots &    \vdots    \\
              y'(z)     & y''(z) &  \cdots &     y(z)
           \end{array}
           \right|=C,
     \end{equation}
   where the constant $C$ depends only on the initial data and does not depend on $z$.
   \end{corollary}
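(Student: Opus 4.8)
The plan is to recognize the displayed matrix as a classical circulant matrix and to diagonalize it exactly as was done for $W[s_{k}(z)]$ in (\ref{eq3.10}). Writing $M(z)$ for the matrix in the statement, its $(i,j)$ entry is $y^{((j-i)\,\mathrm{mod}\,p)}(z)$; this cyclic structure is precisely what the equation $D^{p}y=y$ produces, since differentiating the last derivative $y^{(p-1)}$ wraps around to $y=y^{(p)}$. Thus $M(z)=\mathrm{circ}\bigl(y(z),y'(z),\ldots,y^{(p-1)}(z)\bigr)$, and it is diagonalized by the unitary Fourier matrix $U$ of Lemma \ref{lem2.1}, giving $\det M(z)=\prod_{m=0}^{p-1}\lambda_{m}(z)$ with eigenvalue factors $\lambda_{m}(z)=\sum_{k=0}^{p-1}\zeta_{m}^{k}\,y^{(k)}(z)$.

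The heart of the argument is to show each factor $\lambda_{m}$ solves a first-order linear equation. Differentiating termwise and reindexing, and then using $y^{(p)}=y$ together with $\zeta_{m}^{p}=1$ to fold the shifted sum back onto itself, I expect to obtain $\lambda_{m}'(z)=\zeta_{m}^{-1}\lambda_{m}(z)$, hence $\lambda_{m}(z)=\lambda_{m}(0)\,e^{\overline{\zeta}_{m}z}$ (recall $\zeta_{m}^{-1}=\overline{\zeta}_{m}$ by (\ref{eq2.4})). Multiplying the factors yields
$$
\det M(z)=\Bigl(\prod_{m=0}^{p-1}\lambda_{m}(0)\Bigr)\exp\Bigl(z\sum_{m=0}^{p-1}\overline{\zeta}_{m}\Bigr).
$$
Since $\sum_{m}\overline{\zeta}_{m}=\overline{\sum_{m}\zeta_{m}}=0$ by (\ref{eq2.2}) with $n=1$, the exponential is identically $1$, so $\det M(z)\equiv\prod_{m}\lambda_{m}(0)$ is independent of $z$. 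Evaluating at $z=0$ identifies the constant as $C=\prod_{m=0}^{p-1}\bigl(\sum_{k=0}^{p-1}\zeta_{m}^{k}y_{k}\bigr)$, which depends only on the Cauchy data $y_{0},\ldots,y_{p-1}$, as claimed.

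The main obstacle is purely bookkeeping: verifying that the displayed circulant really has entries $y^{((j-i)\,\mathrm{mod}\,p)}$ and that the wrap-around is applied consistently both in the matrix and in the derivative computation of $\lambda_{m}$. Once this is pinned down, everything follows from the circulant determinant formula and the single algebraic fact $\sum_{m}\zeta_{m}=0$. An even shorter alternative avoids the eigenvalue computation altogether: the column vector $(y,y',\ldots,y^{(p-1)})^{\!\top}$ satisfies a first-order system $Y'=AY$ whose companion matrix $A$ has $\mathrm{tr}\,A=0$, so by Liouville's formula any determinant built from such solutions is constant; one then only needs to check that $\det M$ coincides, up to a sign and a permutation of rows, with such a Wronskian-type determinant. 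I would present the circulant approach as primary, since it also yields the explicit value of $C$.
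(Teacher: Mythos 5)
Your proof is correct, but it takes a genuinely different route from the paper's. The paper's argument is essentially two lines: since $y=\sum_k y_k s_k$ and differentiation permutes the $s_k$ cyclically (Proposition \ref{pro3.2} (iii)), the matrix in the statement factors as $Y_p^{T}\cdot W[s_k(z)]$, where $Y_p$ is the circulant matrix built from the initial data $y_0,\ldots,y_{p-1}$; by Proposition \ref{pro3.2} (vi), $\det W[s_k(z)]\equiv 1$, so the determinant equals $\det Y_p$, a constant depending only on the data. You never use this factorization: you diagonalize the circulant $M(z)$ directly by the unitary $U$ of Lemma \ref{lem2.1}, show each symbol value $\lambda_m(z)=\sum_k \zeta_m^k y^{(k)}(z)$ satisfies $\lambda_m'=\overline{\zeta}_m\lambda_m$ (your folding step via $y^{(p)}=y$ and $\zeta_m^p=1$ checks out), hence $\lambda_m(z)=\lambda_m(0)e^{\overline{\zeta}_m z}$, and multiply, the exponentials cancelling because $\sum_m\overline{\zeta}_m=0$. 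Both proofs ultimately rest on the same Fourier/circulant structure --- the paper simply hides it inside Proposition \ref{pro3.2} (vi), whose own proof is the diagonalization $U\Lambda(z)U^{\ast}$ --- but yours is self-contained and yields the constant in factored form $C=\prod_m\bigl(\sum_k\zeta_m^k y_k\bigr)$, which is precisely the eigenvalue-product evaluation of the paper's $\det Y_p$; the paper's version is shorter because it reuses established machinery. Your Liouville alternative is also sound, and can be made even simpler: differentiating $\det M(z)$ column by column, the derivative of each column is the cyclically next column, so every term in the sum has a repeated column and the derivative vanishes identically. One caveat shared by both arguments: the cancellation $\sum_m\zeta_m=0$ needs $p\ge 2$ (for $p=1$ the statement itself fails, as does Proposition \ref{pro3.2} (vi)), so the implicit assumption $p\ge2$ costs you nothing relative to the paper.
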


   \begin{proof}
   Since $W[y,y',\ldots,y^{(p-1)}]=Y_{p}^{T}\cdot W[s_{k}(z)]$, where
      $$
      Y_{p}=\begin{pmatrix}
                y_{0}   &  y_{1} &  \cdots &  y_{p-1} \\
               y_{p-1}  &  y_{0} &  \cdots & y_{(p-2)} \\
                \vdots  & \vdots &  \ddots &  \vdots    \\
                y_{1}   &  y_{2} &  \cdots &   y_{0}
            \end{pmatrix},
      $$
   and $\det W[s_{k}(z)]\equiv 1$, then $\det W(y,y',\ldots,y^{(p-1)})=\det Y_{p}$.
   \end{proof}

   \section{\large{Basic identities for $p$-hyperbolic functions}}\label{s:4}

   The derivation of relations between $p$-hyperbolic functions is based on the properties of $p\,$roots of unity and the following simple statement.
     \begin{proposition}\label{pro4.1}
     Let $\zeta_{m}$ $(m=0,1,\ldots,p-1)$ be $p\,$roots of unity.
      The maps
      \begin{equation}\label{eq4.1}
      z\mapsto W_{p}(z)=
            \begin{pmatrix}
               s_{0}(z)   &  s_{1}(z)  & \cdots & s_{p-1}(z)  \\
              s_{p-1}(z)  &  s_{0}(z)  & \cdots & s_{p-2}(z)  \\
                \vdots    &   \vdots   & \ddots &   \vdots    \\
               s_{1}(z)   &  s_{2}(z)  & \cdots & s_{0}(z)
            \end{pmatrix}
      \end{equation}
      is a representation of the additive group of complex numbers in the linear space $\mathbb{C}^{p}$. Moreover, by Proposition \emph{\ref{pro3.2} (vi)}, the main identity
        \begin{equation}\label{eq4.2}
        \det W_{p}(z)\equiv 1
        \end{equation}
      holds.
      \end{proposition}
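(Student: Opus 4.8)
The plan is to exploit the diagonalization already recorded in equation (\ref{eq3.10}), which expresses the circulant matrix $W_p(z)$ of (\ref{eq4.1}) (note that it is literally the same matrix as $W[s_k(z)]$) in the form
\[
W_p(z) = U\Lambda(z)U^{\ast}, \qquad \Lambda(z) = \text{diag}\{e^{z\zeta_0}, e^{z\zeta_1}, \ldots, e^{z\zeta_{p-1}}\},
\]
with $U$ the unitary matrix of Lemma \ref{lem2.1}. Once this representation is in hand, everything reduces to the elementary behaviour of the diagonal factor $\Lambda(z)$ under addition of the argument, and there is essentially no hard step beyond recognizing that $\Lambda$ is a one-parameter group of diagonal matrices.

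First I would check the image of the identity element: since $\Lambda(0) = \text{diag}\{1,\ldots,1\} = I$, the relation $W_p(0) = U I U^{\ast} = UU^{\ast} = I$ follows from the unitarity $UU^{\ast} = I$ of Lemma \ref{lem2.1}. Next I would verify the homomorphism property. For any $z_1, z_2 \in \mathbb{C}$ each diagonal entry multiplies independently, so
\[
\Lambda(z_1 + z_2) = \text{diag}\{e^{(z_1+z_2)\zeta_m}\} = \text{diag}\{e^{z_1\zeta_m}e^{z_2\zeta_m}\} = \Lambda(z_1)\Lambda(z_2).
\]
Conjugating by $U$ and inserting $U^{\ast}U = I$ between the two diagonal factors then gives
\[
W_p(z_1+z_2) = U\Lambda(z_1)\Lambda(z_2)U^{\ast} = \bigl(U\Lambda(z_1)U^{\ast}\bigr)\bigl(U\Lambda(z_2)U^{\ast}\bigr) = W_p(z_1)W_p(z_2),
\]
which is precisely the statement that $z \mapsto W_p(z)$ is a representation of the additive group $(\mathbb{C}, +)$ in $\mathbb{C}^p$.

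The only point requiring a little care is that the cancellation $U^{\ast}U = I$ used in the middle step is the companion unitarity relation to the $UU^{\ast} = I$ proved in Lemma \ref{lem2.1}; since $U$ is a square unitary matrix both hold, but it is $U^{\ast}U = I$ that is actually invoked here. Finally, the main identity $\det W_p(z) \equiv 1$ is nothing other than Proposition \ref{pro3.2}(vi), so I would simply cite it; alternatively one reads it straight off the diagonalization as $\det W_p(z) = \det \Lambda(z) = e^{z(\zeta_0 + \cdots + \zeta_{p-1})} = e^0 = 1$, using (\ref{eq2.2}).
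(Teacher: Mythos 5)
Your proposal is correct and follows exactly the paper's own argument: both use the diagonalization $W_{p}(z)=U\Lambda(z)U^{\ast}$ from (\ref{eq3.10}) together with the obvious group law $\Lambda(z_{1}+z_{2})=\Lambda(z_{1})\Lambda(z_{2})$, $\Lambda(0)=I$, and both obtain $\det W_{p}(z)\equiv 1$ from Proposition \ref{pro3.2}(vi) (equivalently, from $\det\Lambda(z)=e^{z(\zeta_{0}+\cdots+\zeta_{p-1})}=1$). Your only addition is to make explicit the cancellation $U^{\ast}U=I$, which the paper leaves implicit.
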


     \begin{proof}
     Recall (see (\ref{eq3.10})) that
       \begin{equation}\label{eq4.3}
       W_{p}(z)=U\Lambda(z)U^{\ast},\quad
         \Lambda(z)=\text{diag}\{e^{z\zeta_{0}},e^{z\zeta_{1}},\ldots,e^{z\zeta_{p-1}}\},
       \end{equation}
     where $U$ is a unitary matrix from Lemma \ref{lem2.1}. Since
         $$
         \Lambda(z_{1}+z_{2})=\Lambda(z_{1})\cdot\Lambda(z_{2}),\quad \Lambda(0)=I,
         $$
     then $W_{p}(z_{1}+z_{2})=W_{p}(z_{1})\cdot W_{p}(z_{2})$,\; $W_{p}(0)=I$.
     \end{proof}

     \begin{remark}\label{rem3.1}
     Since the elements of the matrix $W_{p}(z_{1}+z_{2})$ are functions $s_{k}(z_{1}+z_{2})$, then from the identity $W_{p}(z_{1}+z_{2})=W_{p}(z_{1})\cdot W_{p}(z_{2})$ we obtain formulas for functions of the sum of the arguments. The identity $W_{p}(z)W_{p}(-z)=I$ binds the values of functions on opposite values of the arguments.
     \end{remark}

     \begin{remark}\label{rem3.2}
     From (\ref{eq4.3}) follows $\Lambda(z)=U^{\ast}W_{p}(z)U$. Comparing the diagonal elements, we obtain \emph{the Euler identities} for $p$-hyperbolic functions
        \begin{equation}\label{eq4.4}
        e^{z\zeta_{k}}=  \zeta_{0}^{k}s_{0}(z)+\zeta_{1}^{k}s_{1}(z)+\cdots+\zeta_{p-1}^{k}s_{p-1}(z),\quad k=0,1,\ldots,p-1.
        \end{equation}
     \end{remark}

   To conclude this section, we present the corresponding formulas for the case $p=3$. We obtain
   \medskip

     (i) \emph{the main identity} $(\det W_{3}(z)=1)$
       \begin{equation}\label{eq4.5}
       s_{0}(z)^{3}+s_{1}(z)^{3}+s_{2}(z)^{3}-3s_{0}(z)s_{1}(z)s_{2}(z)=1,
       \end{equation}

     (ii) \emph{addition formulas} $(W_{3}(z+w)=W_{3}(z)\cdot W_{3}(w))$
       \begin{equation}\label{eq4.6}
         \begin{array}{ccc}
         s_{0}(z+w) & = & s_{0}(z)s_{0}(w)+s_{1}(z)s_{2}(w)+s_{2}(z)s_{1}(w),\\
         s_{1}(z+w) & = & s_{0}(z)s_{1}(w)+s_{1}(z)s_{0}(w)+s_{2}(z)s_{2}(w),\\
         s_{2}(z+w) & = & s_{0}(z)s_{2}(w)+s_{1}(z)s_{1}(w)+s_{2}(z)s_{0}(w),
         \end{array}
       \end{equation}

     (iii) \emph{double argument}
       \begin{equation}\label{eq4.7}
         \begin{array}{ccc}
         s_{0}(2z) & = & s_{0}(z)^{2}+2s_{1}(z)s_{2}(z),\\
         s_{1}(2z) & = & 2s_{0}(z)s_{1}(z)+s_{2}(z)^{2},\\
         s_{2}(2z) & = & 2s_{0}(z)s_{2}(z)+s_{1}(z)^{2},
         \end{array}
       \end{equation}

     (iv) \emph{triple argument}
       \begin{equation}\label{eq4.8}
         \begin{array}{ccc}
         s_{0}(3z) & = & 1+9s_{0}(z)s_{1}(z)s_{2}(z),\\
         s_{1}(3z) & = & 3s_{0}(z)^{2}s_{1}(z)+3s_{0}(z)^{2}s_{2}(z)+3s_{1}(z)^{2}s_{2}(z),\\
         s_{2}(3z) & = & 3s_{0}(z)s_{1}(z)^{2}+3s_{0}(z)s_{2}(z)^{2}+3s_{1}(z)s_{2}(z)^{2},
         \end{array}
       \end{equation}

     (v) \emph{convert product to sum}
       \begin{equation}\label{eq4.9}
     \begin{array}{ccc}
     3s_{0}(z)s_{0}(w) &=& s_{0}(z+w)+s_{0}(z+\zeta_{1}w)+s_{0}(z+\zeta_{2}w), \\
     3s_{1}(z)s_{2}(w) &=& s_{0}(z+w)+\zeta_{1}s_{0}(z+\zeta_{1}w)+\zeta_{2}s_{0}(z+\zeta_{2}w), \\
     3s_{1}(z)s_{0}(w) &=& s_{1}(z+w)+s_{1}(z+\zeta_{1}w)+s_{1}(z+\zeta_{2}w), \\
     3s_{2}(z)s_{2}(w) &=& s_{1}(z+w)+\zeta_{1}s_{1}(z+\zeta_{1}w)+\zeta_{2}s_{1}(z+\zeta_{2}w), \\
     3s_{2}(z)s_{0}(w) &=& s_{2}(z+w)+s_{2}(z+\zeta_{1}w)+s_{2}(z+\zeta_{2}w), \\
     3s_{1}(z)s_{1}(w) &=& s_{2}(z+w)+\zeta_{2}s_{2}(z+\zeta_{1}w)+\zeta_{1}s_{1}(z+\zeta_{2}w).
     \end{array}
       \end{equation}

     (vi) \emph{connection of direct and opposite arguments} $(W_{3}(z)W_{3}(-z)=I)$
       \begin{equation}\label{eq4.10}
         \begin{array}{ccc}
          s_{0}(z)s_{0}(-z)+s_{1}(z)s_{2}(-z)+s_{2}(z)s_{1}(-z) & = & 1,\\ s_{0}(z)s_{1}(-z)+s_{1}(z)s_{0}(-z)+s_{2}(z)s_{2}(-z) & = & 0,\\
          s_{0}(z)s_{2}(-z)+s_{1}(z)s_{1}(-z)+s_{2}(z)s_{0}(-z) & = & 0,
         \end{array}
       \end{equation}

     (vii) \emph{opposite argument}
       \begin{equation}\label{eq4.11}
         \begin{array}{ccc}
          s_{0}(-z) & = & s_{0}(z)^{2}-s_{1}(z)s_{2}(z)\\
          s_{1}(-z) & = & s_{2}(z)^{2}-s_{0}(z)s_{1}(z)\\
          s_{2}(-z) & = & s_{1}(z)^{2}-s_{0}(z)s_{2}(z).
         \end{array}
       \end{equation}
     From here it is not difficult to obtain formulas for $p$-trigonometric functions $c_{k}(z)=s_{k}(iz)/i^{k}$.

   \section{\large{Zeros of functions $s_{k}(z)$ for $p=3$}}\label{s:5}
   Let $\zeta_{\nu}$, $\nu=0,1,2$ be $3$-roots of unity. Introduce the notation
     $$
     l_{\nu}^{+}=\{z\in\mathbb{C}\mid z=t\cdot\zeta_{\nu},\;t>0\},\quad
     l_{\nu}^{-}=\{z\in\mathbb{C}\mid z=t\cdot\zeta_{\nu},\;t\leq0\}.
     $$
   Rays $l_{\nu}^{+}$, $\nu=0,1,2$ divide the complex plane $\mathbb{C}$ into three sectors
     $$
     S_{_{\nu}}=\{z\in\mathbb{C}\mid \arg\zeta_{\nu}<\arg z<\arg\zeta_{(\nu+1)\,(\text{mod}\,3)}\},\quad \nu=0,1,2.
     $$
   The ray $l_{\nu}^{-}$ is a bisectrix of sector $S_{(\nu+1)\,(\text{mod}\,3)}$.

   \begin{proposition}\label{pro5.1}
   $3$-hyperbolic functions $s_{k}(z)$ $(k=0,1,2)$ do not vanish on the set
     \begin{equation}\label{eq5.1}
     \Omega=\mathbb{C}\setminus\bigcup_{\nu=0}^{2}l_{\nu}^{-}.
     \end{equation}
   In other words, the zeros of the functions $s_{k}(z)$ lie only on the bisectrices of each sector.
   \end{proposition}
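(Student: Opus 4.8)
The plan is to reduce the statement to a single sector by the built-in symmetry, and then to locate the zeros by a phase (winding-number) argument, since, as I explain below, magnitude estimates alone provably cannot suffice. First I would exploit the $k_{3}$-even property $s_{k}(\zeta_{m}z)=\zeta_{m}^{k}s_{k}(z)$ together with the reality relation $\overline{s_{k}(z)}=s_{k}(\overline{z})$ of Proposition \ref{pro3.2}(v). Since $\zeta_{m}^{k}\neq 0$, the zero set of each $s_{k}$ is invariant under the rotations of $G_{3}$; these rotations permute the three bisectrices $l_{\nu}^{-}$ cyclically, and likewise permute the three connected components of $\Omega$. Hence it suffices to prove $s_{k}(z)\neq 0$ on one component, the open sector $\Omega_{0}=\{z\neq 0:\ |\arg z|<\pi/3\}$ about the positive real axis (and, by the reality relation, only on its upper half $0\le\arg z<\pi/3$). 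On the positive real axis itself nonvanishing is immediate from the Taylor series $s_{k}(z)=\sum_{n\ge 0}z^{k+3n}/(k+3n)!$ of Proposition \ref{pro3.2}(iv), since $s_{k}(x)=x^{k}/k!+\cdots>0$ for $x>0$.

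Next I would isolate the dominant exponential. Writing $3s_{k}(z)=e^{z}\bigl(\zeta_{0}^{-k}+\zeta_{1}^{-k}e^{(\zeta_{1}-1)z}+\zeta_{2}^{-k}e^{(\zeta_{2}-1)z}\bigr)$ and using $\zeta_{1}-1=\sqrt{3}\,e^{i5\pi/6}$, $\zeta_{2}-1=\sqrt{3}\,e^{-i5\pi/6}$, one computes $\operatorname{Re}\bigl((\zeta_{1}-1)z\bigr)=-\sqrt{3}\,|z|\sin(\arg z+\pi/3)$ and $\operatorname{Re}\bigl((\zeta_{2}-1)z\bigr)=-\sqrt{3}\,|z|\sin(\pi/3-\arg z)$, both strictly negative on $\Omega_{0}$. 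Thus both subdominant exponentials have modulus $<1$ there, so $e^{z}$ is the unique term of maximal modulus. It is tempting to finish by the triangle inequality, but that fails: for small $|z|$ the two subdominant moduli can sum to more than $1$, so $e^{z}$ does not dominate the \emph{sum} of the other two. What saves the statement is phase, not size. On the positive real axis the subdominant sum equals $2e^{-|z|/2}\cos(\tfrac{\sqrt{3}}{2}|z|)$, which is real and aligned \emph{with} $e^{z}>0$ precisely in the regime (small $|z|$) where it is large, and opposes $e^{z}$ only once $|z|>\pi/\sqrt{3}$, by which point its modulus has decayed well below $e^{|z|}$. This is the mechanism that must be made global.

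To globalize it I would invoke the argument principle on $\Sigma_{R}=\Omega_{0}\cap\{|z|<R\}$, tracking the variation of $\arg s_{k}$ along the suitably indented boundary. On the large circular arc the factor $e^{z}$ dominates, so $\arg s_{k}\approx\operatorname{Im}z$ increases by about $\sqrt{3}\,R$ as $\arg z$ runs from $-\pi/3$ to $\pi/3$. On each of the two bounding bisectrices the symmetry relation $s_{k}(\zeta_{\nu}(-s))=\zeta_{\nu}^{k}s_{k}(-s)$ forces $s_{k}$ to equal a fixed unit factor times the \emph{real} function $s_{k}(-s)$, whose dominant term is $\sim e^{s/2}\cos(\tfrac{\sqrt{3}}{2}s-\varphi_{k})$; its argument is therefore locally constant and jumps by $\pm\pi$ only at the real zeros, which are asymptotically spaced by $2\pi/\sqrt{3}$. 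Through the indentations the two edges then contribute a compensating change of about $-\sqrt{3}\,R$, and the expected conclusion is that the net winding number is $0$, so $s_{k}$ has no zeros in the interior of $\Omega_{0}$; by the symmetry reduction this proves the proposition. The hard part will be exactly this phase bookkeeping: making the arc estimate uniform up to the sector edges (where a second exponential becomes comparable), pinning down the signs and the precise count of the boundary zeros on the two bisectrices so that they cancel the arc contribution, and dealing with the corner at the origin and the small indentations around the boundary zeros. The magnitude estimates of the second step are indispensable inputs, but, as noted, they are insufficient by themselves, so this winding analysis is where the real content of the result lies.
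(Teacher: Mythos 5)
Your symmetry reduction (rotation invariance of the zero sets plus the reflection relation) is sound and coincides with the paper's first step, but everything after that is a plan rather than a proof: the entire content of the proposition is delegated to the winding-number computation, and that computation is never performed. You yourself list the missing pieces --- uniform dominance of $e^{z}$ on the arc up to the corners (where $|e^{(\zeta_{2}-1)z}|\rightarrow 1$, so the second exponential is genuinely comparable and the naive arc estimate breaks down), the exact number and placement of the boundary zeros relative to $R$, the contributions of the indentation semicircles, and the corner at the origin. These are not routine details. The argument principle returns a non-negative integer (the number of zeros inside the indented contour), so to conclude that it vanishes you must control the \emph{sum} of all boundary contributions to within an error strictly less than $2\pi$, along a sequence of radii $R$ threaded between the infinitely many edge zeros. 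Your estimates are leading-order heuristics (``about $\sqrt{3}\,R$'', ``about $-\sqrt{3}\,R$'', ``the expected conclusion is that the net winding number is $0$'') and give no handle on the $O(1)$ terms that decide the answer. As it stands, the proposal identifies a viable strategy but establishes nothing.

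Moreover, your meta-claim that magnitude estimates ``provably cannot suffice'' is refuted by the paper itself, whose proof is elementary and entirely about size. The paper writes $3s_{k}(z)=e^{-z/2}\left(e^{3z/2}+2\cos\left(\frac{\sqrt{3}z}{2}-\frac{2\pi k}{3}\right)\right)$, so zeros solve $\cos\left(\frac{\sqrt{3}z}{2}-\frac{2\pi k}{3}\right)=-\frac{1}{2}e^{3z/2}$ (Lemma \ref{lem5.1}); working in the closed sector about the negative real axis (so $x\leq 0$, $y\neq 0$), separating real and imaginary parts, eliminating the unknown quantity $\cos\left(\frac{\sqrt{3}x}{2}-\frac{2\pi k}{3}\right)$ and squaring yields (\ref{eq5.9}), which contradicts the real inequality $\sqrt{3}\tanh\left(\frac{\sqrt{3}y}{2}\right)\geq\sin\left(\frac{3y}{2}\right)$ (Lemma \ref{lem5.2}, strict for $y\neq 0$) combined with $e^{3x}\leq 1$. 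The failure of the triangle inequality on the three separate exponentials, which you correctly observe, only shows that one must first merge the two subdominant exponentials into a single cosine and exploit reality; it does not force a phase argument. So your route is genuinely different from the paper's, but unlike the paper's it is incomplete at precisely the point you yourself call ``the hard part,'' and its guiding premise --- that no size-based argument can work --- is false.
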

  We need two lemmas

  \begin{lemma}\label{lem5.1}
  Let $z=x+iy$ $(x,y\in\mathbb{R})$ and $s_{k}(z)$, $k=0,1,2$ be $3$-hyperbolic functions. The zeros of the function $s_{k}(x+iy)$ are the roots of the system of equations
    \begin{equation}\label{eq5.2}
     \begin{cases}
     \cosh\frac{y\sqrt 3}{2}\cos\left(\frac{x\sqrt 3}{2}-\frac{2\pi k}{3}\right)=
       -\frac{1}{2}e^{\frac{3x}{2}}\cos\frac{3y}{2}\\
     \sinh\frac{y\sqrt 3}{2}\sin\left(\frac{x\sqrt 3}{2}-\frac{2\pi k}{3}\right)=
       \frac{1}{2}e^{\frac{3x}{2}}\sin\frac{3y}{2}
     \end{cases}
     \end{equation}
  \end{lemma}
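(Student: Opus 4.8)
The plan is to reduce the vanishing condition $s_k(x+iy)=0$ to a single complex equation and only then separate real and imaginary parts. For $p=3$ the definition \eqref{eq3.3} reads
\begin{equation}\nonumber
3s_k(z)=e^{z}+\zeta_{1}^{-k}e^{z\zeta_{1}}+\zeta_{2}^{-k}e^{z\zeta_{2}},
\end{equation}
where $\zeta_{1}=-\tfrac12+i\tfrac{\sqrt3}{2}$, $\zeta_{2}=\overline{\zeta_{1}}$, and, by \eqref{eq2.4}, $\zeta_{1}^{-k}=e^{-2\pi ik/3}$, $\zeta_{2}^{-k}=e^{2\pi ik/3}$. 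First I would record the elementary identities $z\zeta_{0}=-\tfrac{z}{2}+\tfrac{3z}{2}$ and $z\zeta_{1,2}=-\tfrac{z}{2}\pm i\tfrac{\sqrt3}{2}z$, which let me pull the common factor $e^{-z/2}$ out of all three summands.

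Next I would combine the two terms carrying conjugate phases. After factoring,
\begin{equation}\nonumber
3s_k(z)=e^{-z/2}\left[e^{3z/2}+e^{-2\pi ik/3}e^{i\frac{\sqrt3}{2}z}+e^{2\pi ik/3}e^{-i\frac{\sqrt3}{2}z}\right],
\end{equation}
and the last two terms collapse, via $e^{i\varphi}+e^{-i\varphi}=2\cos\varphi$, to $2\cos\!\left(\tfrac{\sqrt3}{2}z-\tfrac{2\pi k}{3}\right)$. Since $e^{-z/2}$ never vanishes, $s_k(z)=0$ is equivalent to the single complex equation
\begin{equation}\nonumber
\cos\!\left(\frac{\sqrt3}{2}z-\frac{2\pi k}{3}\right)=-\frac12\,e^{3z/2}.
\end{equation}
This is the conceptual heart of the argument: recognizing that the three-term sum factors through $e^{-z/2}$ into an elementary cosine is exactly what produces the clean right-hand side $e^{3x/2}\cos\frac{3y}{2}$, $e^{3x/2}\sin\frac{3y}{2}$ of the system, since these are the real and imaginary parts of $e^{3z/2}$.

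Finally I would substitute $z=x+iy$ and split the last identity into real and imaginary parts. Writing $A=\tfrac{\sqrt3}{2}x-\tfrac{2\pi k}{3}$ and $u=\tfrac{\sqrt3}{2}y$, so that $\tfrac{\sqrt3}{2}z-\tfrac{2\pi k}{3}=A+iu$, the standard expansion $\cos(A+iu)=\cos A\cosh u-i\sin A\sinh u$ handles the left-hand side, while $e^{3z/2}=e^{3x/2}\bigl(\cos\tfrac{3y}{2}+i\sin\tfrac{3y}{2}\bigr)$ handles the right-hand side. Matching real parts yields the first equation of \eqref{eq5.2}, and matching imaginary parts (after multiplying by $-1$) yields the second. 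The only delicate point is the bookkeeping of the powers $\zeta_{1}^{-k}$, $\zeta_{2}^{-k}$ and of the signs in the hyperbolic–trigonometric expansion; I do not expect any genuine obstacle once the factorization above is in hand.
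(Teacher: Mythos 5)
Your proposal is correct and follows essentially the same route as the paper: factor $e^{-z/2}$ out of $3s_k(z)$, collapse the two conjugate exponentials into $2\cos\bigl(\tfrac{\sqrt3}{2}z-\tfrac{2\pi k}{3}\bigr)$, reduce $s_k(z)=0$ to the single complex equation $\cos\bigl(\tfrac{\sqrt3}{2}z-\tfrac{2\pi k}{3}\bigr)+\tfrac12 e^{3z/2}=0$, and then separate real and imaginary parts via $\cos(\alpha+i\beta)=\cosh\beta\cos\alpha-i\sinh\beta\sin\alpha$. The sign bookkeeping in your final step matches the system \eqref{eq5.2} exactly.
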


  \begin{proof}
  We have
      $$
      \zeta_{k}=\zeta_{1}^{k}=e^{i2\pi k/3}\;\Rightarrow\; \zeta_{0}=1,\quad\zeta_{1}=-\frac{1}{2}+i\frac{\sqrt 3}{2},\quad
      \zeta_{2}=-\frac{1}{2}-i\frac{\sqrt 3}{2}.
      $$
  Then from (\ref{eq3.3}) we obtain
      \begin{multline}\nonumber
      3s_{k}(z)=e^{z}+\frac{1}{\zeta_{1}^{k}}e^{z\zeta_{1}}+\frac{1}{\zeta_{2}^{k}}e^{z\zeta_{2}}=
        e^{z}+\overline{\zeta}_{1}^{k}e^{z\zeta_{1}}+\zeta_{1}^{k}e^{z\zeta_{2}}=\\
      e^{-\frac{z}{2}}\left(e^{\frac{3z}{2}}+e^{-i\frac{2\pi k}{3}}e^{i\frac{z\sqrt 3}{2}}+
      e^{i\frac{2\pi k}{3}}e^{-i\frac{z\sqrt 3}{2}}\right)=\qquad\qquad\\
      e^{-\frac{z}{2}}\left(e^{\frac{3z}{2}}+e^{i\left(\frac{z\sqrt 3}{2}-\frac{2\pi k}{3}\right)}+
      e^{-i\left(\frac{z\sqrt 3}{2}-\frac{2\pi k}{3}\right)}\right)=\\
        e^{-\frac{z}{2}}\left(e^{\frac{3z}{2}}+2\cos\left(\frac{z\sqrt 3}{2}-\frac{2\pi k}{3}\right)\right).
      \end{multline}
  Therefore, the zeros of the function $s_{k}(z)$ are the roots of the equation
     \begin{equation}\label{eq5.3}
     \cos\left(\frac{z\sqrt 3}{2}-\frac{2\pi k}{3}\right)+\frac{1}{2}e^{\frac{3z}{2}}=0.
     \end{equation}
  It is enough to apply the identity
     $$
     \cos(\alpha+i\beta)=\cosh\beta\cos\alpha-i\sinh\beta\sin\alpha
     $$
  to the left side (\ref{eq5.3}) and separate the real and imaginary parts.
  \end{proof}

  \begin{lemma}\label{lem5.2}
  For $y\geq0$ the inequality
     \begin{equation}\label{eq5.4}
     \sqrt 3\cdot\tanh\left(\frac{y\sqrt 3}{2}\right)\geq\sin\left(\frac{3y}{2}\right)
     \end{equation}
  is true. Equality in \emph{(\ref{eq5.4})} holds only for $y=0$.
  \end{lemma}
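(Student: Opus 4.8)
The plan is to pass to the variable $t=\tfrac{3y}{2}\ge 0$, under which $\tfrac{y\sqrt3}{2}=\tfrac{t}{\sqrt3}$, so that (\ref{eq5.4}) becomes
\[
h(t):=\sqrt3\,\tanh\!\Big(\tfrac{t}{\sqrt3}\Big)-\sin t\ge 0,\qquad t\ge 0,
\]
with equality only at $t=0$. The difficulty is that $h$ vanishes to third order at the origin ($h(0)=h'(0)=h''(0)=0$) while $\sin t$ keeps oscillating, so neither a single monotonicity statement for $h$ nor the crude bound $\sin t\le t$ can settle all of $[0,\infty)$. I would therefore split the half-line into a bounded initial interval, where sharp polynomial bounds decide the matter, and a tail, where the saturation $\tanh\to 1$ trivializes it.

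For the tail, let $t_{1}>0$ be the unique solution of $\tanh(t_{1}/\sqrt3)=1/\sqrt3$ (numerically $t_{1}\approx1.14$). Since $\tanh$ is increasing, for every $t\ge t_{1}$ one has $\sqrt3\,\tanh(t/\sqrt3)\ge 1\ge\sin t$, and this is strict because $\sqrt3\,\tanh(t/\sqrt3)=1$ occurs only at $t=t_{1}$, where $\sin t_{1}<1$. This disposes of $[t_{1},\infty)$.

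For the initial interval I would use two elementary one-sided estimates. First, $\tanh u\ge u-\tfrac{u^{3}}{3}$ for $u\ge 0$: setting $\psi(u)=\tanh u-u+\tfrac{u^{3}}{3}$ gives $\psi(0)=0$ and $\psi'(u)=u^{2}-\tanh^{2}u\ge 0$ (because $0\le\tanh u\le u$), whence $\psi\ge 0$; with $u=t/\sqrt3$ this yields $\sqrt3\,\tanh(t/\sqrt3)\ge t-\tfrac{t^{3}}{9}$. Second, the standard truncated-series bound $\sin t\le t-\tfrac{t^{3}}{6}+\tfrac{t^{5}}{120}$ for $t\ge 0$, obtained by integrating $\cos t\le 1$ four times. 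Subtracting,
\[
h(t)\ge\Big(t-\tfrac{t^{3}}{9}\Big)-\Big(t-\tfrac{t^{3}}{6}+\tfrac{t^{5}}{120}\Big)
=\frac{t^{3}}{18}-\frac{t^{5}}{120}=\frac{t^{3}}{360}\,(20-3t^{2}),
\]
which is $>0$ for $0<t<\sqrt{20/3}\ (\approx 2.58)$.

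Finally I would observe that the two ranges overlap: since $t_{1}\approx 1.14<\sqrt{20/3}$, the intervals $[0,\sqrt{20/3}\,]$ and $[t_{1},\infty)$ cover $[0,\infty)$, and on each of them $h(t)>0$ for $t>0$ (on the overlap both arguments apply). Hence $h(t)>0$ for all $t>0$ with equality only at $t=0$, i.e.\ (\ref{eq5.4}) holds with equality only at $y=0$. The only point requiring care is the overlap condition $t_{1}<\sqrt{20/3}$; the real obstacle is the global oscillation of $\sin t$, which is defeated not by any single global argument but precisely by combining the saturation bound on the tail with the cubic/quintic Taylor bounds on the bounded piece.
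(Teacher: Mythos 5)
Your proof is correct and follows essentially the same route as the paper: a cubic lower bound for $\tanh$ plus a quintic upper bound for $\sin$ settle a bounded initial interval, the saturation $\sqrt 3\,\tanh(\cdot)>1\geq\sin(\cdot)$ settles the tail, and your overlap condition $t_{1}<\sqrt{20/3}$ reduces to the same numerical fact, $\tanh(2\sqrt 5/3)>1/\sqrt 3$, that the paper verifies at its matching point $y_{0}=4\sqrt 5/(3\sqrt 3)$. The one refinement on your side is proving $\tanh u\geq u-\tfrac{u^{3}}{3}$ for all $u\geq 0$ by differentiation, whereas the paper invokes the alternating Taylor series of $\tanh$, which converges only for $|u|<\pi/2$ and therefore forces the additional check $y_{0}<\pi/\sqrt 3$.
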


  \begin{proof}
  The hyperbolic tangent is represented by an alternating series
    $$
    \tanh x=x-\frac{1}{3}x^{3}+\frac{2}{15}x^{5}-\frac{17}{315}x^{7}+\cdots\quad\text{for}\quad
    |x|<\frac{\pi}{2}.
    $$
  Therefore,
    $$
    \tanh x=x-\frac{1}{3}x^{3}+\Delta,\quad\text{where}\quad \Delta>0.
    $$
  From this we obtain
    \begin{equation}\label{eq5.5}
    \sqrt 3\cdot\tanh\left(\frac{y\sqrt 3}{2}\right)\geq\frac{3y}{2}-\frac{3y^{3}}{8}\quad
    \text{for}\quad 0\leq y\leq\frac{\pi}{\sqrt 3}.
    \end{equation}
  Similarly,
   $$
   \sin x=x-\frac{1}{3!}x^{3}+\frac{1}{5!}x^{5}-\Delta_{1}\quad\text{where}\quad \Delta_{1}\geq 0
   \quad\text{for}\quad x\geq 0.
   $$
  Then
    \begin{equation}\label{eq5.6}
    \frac{3y}{2}-\frac{9y^{3}}{16}+\frac{81y^{5}}{1280}\geq\sin\left(\frac{3y}{2}\right)
    \quad\text{for}\quad y\geq 0.
    \end{equation}
  Given (\ref{eq5.5}) and (\ref{eq5.6}), the inequality (\ref{eq5.4}) is true if
    \begin{equation}\label{eq5.7}
    \frac{3y}{2}-\frac{3y^{3}}{8}>\frac{3y}{2}-\frac{9y^{3}}{16}+\frac{81y^{5}}{1280}
    \end{equation}
  is true. Find a solution to the inequality (\ref{eq5.7}).
   We have
     $$
     y^{3}(80-27y^{2})>0\quad\Rightarrow\quad 0<y<y_{0}=\frac{4\sqrt 5}{3\sqrt 3}\approx 1,72<\frac{\pi}{\sqrt 3}\approx 1,81.
     $$
  Therefore, the strict inequality (\ref{eq5.4}) is true for $0<y<y_{0}$. Since
     $$
     \sqrt 3\tanh\left(\frac{y\sqrt 3}{2}\right)\geq
       \sqrt 3\tanh\left(\frac{y_{0}\sqrt 3}{2}\right)=
       \sqrt 3\tanh\left(\frac{2\sqrt 5}{3}\right)\approx 1,56>1
     $$
  for $y\geq y_{0}$, the strict inequality (\ref{eq5.4}) is true for all $y>0$.
  \end{proof}
  \medskip

  \noindent
  \emph{Proof of Proposition} \ref{pro5.1}.\quad
  Since $s_{k}(\zeta_{\nu}z)=\zeta_{\nu}^{k}s_{k}(z)$, it suffices to prove that in some closed sector $\overline{S}_{\nu}$ (minus the bisectrix) the function $s_{k}(z)$ does not vanish.

  If the point $z=x+iy$ belongs to $\overline{S}_{1}\setminus l_{0}^{-}$, then $x\leq 0$ and $y\neq 0$, $y\in\mathbb{R}$. Taking Lemma \ref{lem5.1} into account, we prove that such points do not satisfy the system of equations (\ref{eq5.2}). From the first equation of the system, we obtain
     $$
     \cos\left(\frac{x\sqrt 3}{2}-\frac{2\pi k}{3}\right)=
         -\frac{e^{3x/2}}{2}\frac{\cos(3y/2)}{\cosh(y\sqrt 3/2)}.
     $$
  Thus
    \begin{equation}\label{eq5.8}
    \sin^{2}\left(\frac{x\sqrt 3}{2}-\frac{2\pi k}{3}\right)=
      \frac{4\cosh^{2}(y\sqrt 3/2)-e^{3x}\cos^{2}(3y/2)}{4\cosh^{2}(y\sqrt 3/2)}.
    \end{equation}
  Show that (\ref{eq5.8}) does not satisfy the second equation from (\ref{eq5.2}) for
  $x\leq 0$ and $y\neq 0$. Squaring both sides of the second equation, we get
    $$
    \sinh^{2}\frac{y\sqrt 3}{2}\sin^{2}\left(\frac{x\sqrt 3}{2}-\frac{2\pi k}{3}\right)=
        \frac{1}{4}e^{3x}\sin^{2}\frac{3y}{2}.
    $$
  Substituting (\ref{eq5.8}) into the last equation, we obtain
    \begin{equation}\label{eq5.9}
    \tanh^{2}\frac{y\sqrt 3}{2}
    \left(4\cosh^{2}\frac{y\sqrt 3}{2}-e^{3x}\cos^{2}\frac{3y}{2}\right)=e^{3x}\sin^{2}\frac{3y}{2}.
    \end{equation}
  By Lemma \ref{lem5.2} we have the strict inequality
     $$
     3\cdot\tanh^{2}\frac{y\sqrt 3}{2}>\sin^{2}\frac{3y}{2}\quad\text{for}\quad y\neq 0.
     $$
  Then from (\ref{eq5.9}) we obtain the  strict inequality (for $x\leq 0$)
    \begin{multline}\nonumber
    \tanh^{2}\frac{y\sqrt 3}{2}
    \left(4\cosh^{2}\frac{y\sqrt 3}{2}-e^{3x}\cos^{2}\frac{3y}{2}\right)\geq\\
         3\cdot\tanh^{2}\frac{y\sqrt 3}{2}>\sin^{2}\frac{3y}{2}\geq
            e^{3x}\sin^{2}\frac{3y}{2}.\qquad
    \end{multline}
  Therefore, $s_{k}(z)\neq 0$ for $z\in \overline{S}_{1}\setminus l_{0}^{-}$.
  \qed
  \medskip

  By Proposition \ref{pro5.1}, the zeros of the $s_{0}(z),s_{1}(z)$ and $s_{2}(z)$  are located on $l_{\nu}^{-}$, $\nu=0,1,2$.
  Since $s_{k}(\zeta_{m}z)=\zeta_{m}^{k}s_{k}(z)$ are satisfied, it suffices to find the zeros lying only on $l_{0}^{-}=\{x\in\mathbb{R}\mid x\leq 0\}$.

  Taking into account (\ref{eq5.3}), we obtain the following assertion.

    \begin{proposition}\label{pro5.2}
  An infinite sequence of zeros $x_{j}^{(k)}$ $(j=1,2,\,\ldots)$ of the function $s_{k}(x)$ are the roots of the equation
       \begin{equation}\label{eq5.10}
       \cos\left(\frac{x\sqrt 3}{2}-\frac{2\pi k}{3}\right)=-\frac{1}{2}e^{\frac{3x}{2}},\quad
       k=0,1,2.
       \end{equation}
    \end{proposition}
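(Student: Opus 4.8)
The plan is to pass from the full complex zero set down to the real axis, and then to argue that equation (\ref{eq5.10}) has infinitely many real roots by the intermediate value theorem. First I would invoke Proposition \ref{pro5.1}: all zeros of $s_k$ lie on $\bigcup_{\nu}l_\nu^-$, and the homogeneity relation $s_k(\zeta_m z)=\zeta_m^k s_k(z)$ carries the zeros on $l_0^-$ bijectively onto those on each $l_\nu^-$. Hence it suffices to describe the zeros lying on $l_0^-=\{x\in\mathbb{R}\mid x\le 0\}$, that is, the real zeros of $s_k$.

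Next I would specialize the zero equation (\ref{eq5.3}) of Lemma \ref{lem5.1} to a real argument $z=x$. Then $e^{3x/2}$ is real and $\cos\bigl(\tfrac{x\sqrt3}{2}-\tfrac{2\pi k}{3}\bigr)$ is the cosine of a real number, so (\ref{eq5.3}) collapses to precisely (\ref{eq5.10}); conversely, every real solution of (\ref{eq5.10}) forces $s_k(x)=0$. Thus the real zeros of $s_k$ coincide exactly with the real roots of (\ref{eq5.10}).

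It then remains to show that (\ref{eq5.10}) has infinitely many real roots. Put
$$F_k(x)=\cos\Bigl(\frac{x\sqrt3}{2}-\frac{2\pi k}{3}\Bigr)+\frac12 e^{3x/2}.$$
As $x\to-\infty$ the term $\tfrac12 e^{3x/2}$ tends to $0^+$ and stays strictly below $1$ for every $x<\tfrac23\ln 2$, while $\cos\bigl(\tfrac{x\sqrt3}{2}-\tfrac{2\pi k}{3}\bigr)$ attains both the values $+1$ and $-1$ at arbitrarily large negative $x$. At the former points $F_k>0$ and at the latter $F_k<0$, so by continuity $F_k$ vanishes between each consecutive pair. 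This produces an infinite decreasing sequence $x_1^{(k)}>x_2^{(k)}>\cdots\to-\infty$ of negative roots, in agreement with Proposition \ref{pro5.1}.

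The argument is a matter of bookkeeping rather than depth; the one point requiring care is to confirm that the exponentially small perturbation $\tfrac12 e^{3x/2}$ never cancels a sign change of the oscillating cosine, which is guaranteed by the bound $\tfrac12 e^{3x/2}<1$ on the half-line $x<\tfrac23\ln 2$, together with the observation that Proposition \ref{pro5.1} already excludes any real root with $x>0$.
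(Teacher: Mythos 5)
Your proposal is correct and follows essentially the same route as the paper: restrict to $l_{0}^{-}$ via Proposition \ref{pro5.1} and the homogeneity $s_{k}(\zeta_{m}z)=\zeta_{m}^{k}s_{k}(z)$, then specialize the zero equation (\ref{eq5.3}) to real $z=x$ to obtain (\ref{eq5.10}). Your additional intermediate-value argument showing the root sequence is genuinely infinite supplies a detail the paper asserts (the word ``infinite'' in the statement) but never proves, so it is a welcome refinement rather than a different approach.
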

  The sequence $x_{j}^{(2)}$ interlaces with the sequence $x_{j}^{(0)}$ which, in its turn, interlaces with the sequence $x_{j}^{(1)}$. Note that $x_{1}^{(2)}=x_{1}^{(1)}=0$ and $x_{1}^{(0)}<0$.

  \begin{corollary}\label{cor5.1}
  If $x\rightarrow -\infty$, then asymptotically the zeros of the function $s_{k}(x)$ tend to the roots of the equation
      \begin{equation}\label{eq5.11}
      \cos\left(\frac{x\sqrt 3}{2}-\frac{2\pi k}{3}\right)=0.
      \end{equation}
  \end{corollary}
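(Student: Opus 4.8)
The plan is to read off the asymptotics directly from the transcendental equation (\ref{eq5.10}) of Proposition \ref{pro5.2}, treating the term $-\frac12 e^{3x/2}$ as an exponentially small perturbation of the pure cosine equation (\ref{eq5.11}). Set
$$\varphi_k(x)=\frac{x\sqrt3}{2}-\frac{2\pi k}{3},\qquad g_k(x)=\cos\varphi_k(x)+\frac12 e^{3x/2},$$
so that the zeros $x_j^{(k)}$ of $s_k$ are precisely the zeros of $g_k$. The roots of the limiting equation (\ref{eq5.11}) are the points $\widetilde x_n$ at which $\varphi_k(\widetilde x_n)=\pi/2+\pi n$, that is, the arithmetic progression
$$\widetilde x_n=\frac{2}{\sqrt3}\Bigl(\frac{\pi}{2}+\pi n+\frac{2\pi k}{3}\Bigr),$$
with constant spacing $2\pi/\sqrt3$. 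First I would observe that at each such point $\sin\varphi_k(\widetilde x_n)=\pm1$, so $\varphi_k$ crosses the zeros of $\cos$ transversally, which is exactly what makes the perturbation argument work.

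Next I would localize a genuine zero of $g_k$ near each $\widetilde x_n$ with $n$ large negative. Since $g_k(\widetilde x_n)=\tfrac12 e^{3\widetilde x_n/2}$ is positive but exponentially small, while
$$g_k'(x)=-\frac{\sqrt3}{2}\sin\varphi_k(x)+\frac34 e^{3x/2}$$
satisfies $|g_k'(x)|\ge \sqrt3/2 - O(e^{3x/2})$ on a fixed-length interval about $\widetilde x_n$ (because $\sin\varphi_k$ stays close to $\pm1$ there and the exponential term is negligible), the function $g_k$ is strictly monotone on that interval and changes sign across it. The intermediate value theorem then produces a unique zero $x_j^{(k)}$ of $g_k$ in the interval, and the mean value theorem gives the quantitative bound
$$\bigl|x_j^{(k)}-\widetilde x_n\bigr|=\frac{|g_k(\widetilde x_n)|}{|g_k'(\xi)|}\le \frac{\tfrac12 e^{3\widetilde x_n/2}}{\sqrt3/2 - O(e^{3\widetilde x_n/2})}=\frac{1}{\sqrt3}\,e^{3\widetilde x_n/2}\bigl(1+o(1)\bigr),$$
which tends to $0$ as $n\to-\infty$. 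Because the limiting roots $\widetilde x_n$ are uniformly separated, this pairing is a bijection for $n$ sufficiently negative, and it identifies each far-left zero of $s_k$ with exactly one root of (\ref{eq5.11}).

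Finally I would conclude that $x_j^{(k)}-\widetilde x_{n(j)}\to0$ as $j\to\infty$, which is the asserted asymptotic coincidence of the zeros of $s_k(x)$ with the roots of (\ref{eq5.11}). The only point requiring a little care — the main (mild) obstacle — is the uniform lower bound on $|g_k'|$ near the limiting roots: one must choose the interval on which monotonicity is claimed short enough that $\sin\varphi_k$ does not come near zero, yet long enough to straddle the perturbed root. The uniform spacing $2\pi/\sqrt3$ of the $\widetilde x_n$, together with the exponential smallness of both $g_k(\widetilde x_n)$ and the correction term in $g_k'$, makes this entirely elementary.
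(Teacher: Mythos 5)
Your proof is correct and takes essentially the same route as the paper: the paper obtains Corollary \ref{cor5.1} directly from equation (\ref{eq5.10}) of Proposition \ref{pro5.2}, observing that the right-hand side $-\frac{1}{2}e^{3x/2}$ is exponentially small as $x\rightarrow-\infty$, and offers no further detail. Your IVT/mean-value localization of the zeros near the points where $\cos\left(\frac{x\sqrt 3}{2}-\frac{2\pi k}{3}\right)=0$ is simply a rigorous elaboration of that same perturbation idea.
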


   \section{\large{General solution of the inhomogeneous equation}}\label{s:6}

   Consider the Cauchy problem
     \begin{equation}\label{eq6.1}
     \left(\frac{1}{i}D\right)^{p}y(\lambda,x)=\lambda^{p}y(\lambda,x)+f(x),\quad x\in(0,l)
     \end{equation}
   with initial data at zero
      \begin{equation}\label{eq6.2}
      y(\lambda,0)=y_{0},\quad y'(\lambda,0)=y_{1},\quad\ldots\quad,\; y^{(p-1)}(\lambda,0)=y_{p-1},
      \end{equation}
   where $f(x)$ is the function of $x\in\mathbb{R}$, $y_{s}\in\mathbb{C}$ $(0\leq s\leq p-1)$ and $0<l<\infty$.

   It is easy to see that the solution $y_{0}(\lambda,x)$ of the problem (\ref{eq6.1}), (\ref{eq6.2}) with $f(x)\equiv 0$ has the form
     \begin{equation}\label{eq6.3}
     y_{0}(\lambda,x)=\sum_{k=0}^{p-1}y_{k}\frac{1}{(i\lambda)^{k}}s_{k}(i\lambda x)=
         \sum_{k=0}^{p-1}C_{k}e^{i\lambda\zeta_{k} x}
     \end{equation}
   For $f(x)\neq 0$ we find the solution $y_{1}(\lambda,x)$ of equation (\ref{eq6.1}) with zero initial data $y_{0}=y_{1}=\cdots=y_{p-1}=0$. For the method of variation of arbitrary constants, we set
     \begin{equation}\label{eq6.4}
     y_{1}(\lambda,x)=\sum_{k=0}^{p-1}C_{k}(x)e^{i\lambda\zeta_{k} x},
     \end{equation}
   where $C_{k}(x)$ are smooth functions. Easy to see that
     $$
     \left(\frac{1}{i}D\right)y_{1}(\lambda,x)=
     \lambda\sum_{k=0}^{p-1}C_{k}(x)\zeta_{k}e^{i\lambda\zeta_{k} x},
     $$
   under the additional condition
     \begin{equation}\label{eq6.5}
     \sum_{k=0}^{p-1}C'_{k}(x)e^{i\lambda\zeta_{k} x}\equiv 0.
     \end{equation}
   After the second differentiation, under the additional condition
     \begin{equation}\label{eq6.6}
     \sum_{k=0}^{p-1}C'_{k}(x)\zeta_{k}e^{i\lambda\zeta_{k} x}\equiv 0,
     \end{equation}
   we get
     $$
     \left(\frac{1}{i}D\right)^{2}y_{1}(\lambda,x)=
     \lambda^{2}\sum_{k=0}^{p-1}C_{k}(x)\zeta_{k}^{2}e^{i\lambda\zeta_{k} x}.
     $$
   Repeating this trick, at the $p$\,th step we get
     $$
     \left(\frac{1}{i}D\right)^{p}y_{1}(\lambda,x)=\lambda^{p}y_{1}(\lambda,x)+
     \frac{1}{i}\lambda^{p-1}\sum_{k=0}^{p-1}C'_{k}(x)\zeta_{k}^{p-1}e^{i\lambda\zeta_{k} x}.
     $$

   The function $y_{1}(\lambda,x)$ (\ref{eq6.4}) will be a solution to equation (\ref{eq6.1}) if $C_{k}(x)$ satisfy additional conditions (of the form (\ref{eq6.5}), (\ref{eq6.6}) accepted at the first $p-1$ steps) and the condition
     \begin{equation}\label{eq6.7}
     \sum_{k=0}^{p-1}C'_{k}(x)\zeta_{k}^{p-1}e^{i\lambda\zeta_{k} x}=if(x)/\lambda^{p-1}.
     \end{equation}
   Taking into account (\ref{eq6.5}), (\ref{eq6.6}), and (\ref{eq6.7}), we obtain a system of equations for the $\{C'_{k}(x)\}_{k=0}^{p-1}$:
     \begin{equation}\label{eq6.8}
     W(\lambda, x)
       \begin{pmatrix}
       C'_{0}(x)   \\
       C'_{1}(x)   \\
       \vdots      \\
       C'_{p-2}(x)   \\
       C'_{p-1}(x)
       \end{pmatrix}=
          \begin{pmatrix}
          0   \\
          0   \\
          \vdots \\
          0   \\
          if(x)/\lambda^{p-1}
          \end{pmatrix},
     \end{equation}
   where
     $$
     W(\lambda, x)=
        \begin{pmatrix}
         e^{i\lambda\zeta_{0}x}         &     e^{i\lambda\zeta_{1}x}      & \cdots
         & e^{i\lambda\zeta_{p-1}x} \\
        \zeta_{0}e^{i\lambda\zeta_{0}x} & \zeta_{1}e^{i\lambda\zeta_{1}x} & \cdots
         & \zeta_{p-1}e^{i\lambda\zeta_{p-1}x} \\
          \vdots &  \vdots &  \ddots & \vdots \\
        \zeta_{0}^{p-1}e^{i\lambda\zeta_{0}x} & \zeta_{1}^{p-1}e^{i\lambda\zeta_{1}x} & \cdots
         & \zeta_{p-1}^{p-1}e^{i\lambda\zeta_{p-1}x}
        \end{pmatrix}.
     $$
   Since
     $$
     W(\lambda, x)^{-1}=\frac{1}{p}
        \begin{pmatrix}
         e^{-i\lambda\zeta_{0}x}   & e^{-i\lambda\zeta_{0}x}/\zeta_{0}      & \cdots
         &e^{-i\lambda\zeta_{0}x}/\zeta_{0}^{p-1}  \\
        e^{-i\lambda\zeta_{1}x} &e^{-i\lambda\zeta_{1}x}/\zeta_{1} & \cdots
         &e^{-i\lambda\zeta_{1}x}/\zeta_{1}^{p-1} \\
          \vdots &  \vdots &  \ddots & \vdots \\
        e^{-i\lambda\zeta_{p-1}x} & e^{-i\lambda\zeta_{p-1}x}/\zeta_{p-1} & \cdots
         &e^{-i\lambda\zeta_{p-1}x}/\zeta_{p-1}^{p-1}
        \end{pmatrix},
     $$
   then
     $$
     \begin{pmatrix}
       C'_{0}(x)   \\
       C'_{1}(x)   \\
       \vdots      \\
       C'_{p-2}(x)   \\
       C'_{p-1}(x)
       \end{pmatrix}=W(\lambda, x)^{-1}
         \begin{pmatrix}
          0   \\
          0   \\
          \vdots \\
          0   \\
          if(x)/\lambda^{p-1}
          \end{pmatrix}=i\frac{f(x)}{p\lambda^{p-1}}
             \begin{pmatrix}
              e^{-i\lambda\zeta_{0}x}/\zeta_{0}^{p-1} \\
              e^{-i\lambda\zeta_{1}x}/\zeta_{1}^{p-1} \\
              \vdots      \\
              e^{-i\lambda\zeta_{p-2}x}/\zeta_{p-2}^{p-1} \\
              e^{-i\lambda\zeta_{p-1}x}/\zeta_{p-1}^{p-1}
             \end{pmatrix}.
     $$
   Therefore, for $k=0,1,\ldots,p-1$ we have
     $$ C_{k}(x)=\frac{i}{\lambda^{p-1}}\int_{0}^{x}\frac{e^{-i\lambda\zeta_{k}t}}{p\zeta_{k}^{p-1}}
     f(t)dt +D_{k},\quad (D_{k}=\text{const}).
     $$
   Then
     $$
     y_{1}(\lambda,x)=\frac{i}{\lambda^{p-1}}\int_{0}^{x}\frac{1}{p}\sum_{k=0}^{p-1}
       \frac{1}{\zeta_{k}^{p-1}}e^{i\lambda\zeta_{k}(x-t)}f(t)dt+\sum_{k=0}^{p-1}
       D_{k}e^{i\lambda\zeta_{k}x}.
     $$
   From conditions $y_{1}(\lambda,0)=y'_{1}(\lambda,0)=\cdots=y^{(p-1)}_{1}(\lambda,0)=0$ we obtain $D_{k}=0$ for all $k$. Since
     $$
     \frac{1}{p}\sum_{k=0}^{p-1}
       \frac{1}{\zeta_{k}^{p-1}}e^{i\lambda\zeta_{k}(x-t)}=s_{p-1}(i\lambda(x-t)),
     $$
   then
     \begin{equation}\label{eq6.9}
     y_{1}(\lambda,x)=\frac{i}{\lambda^{p-1}}\int_{0}^{x}s_{p-1}(i\lambda(x-t))f(t)dt.
     \end{equation}
   As a result we have
   \begin{lemma}\label{lem6.1}
   The solution of the Cauchy problem \emph{(\ref{eq6.1}), (\ref{eq6.2})} is the function
     \begin{equation}\label{eq6.10}
     y(\lambda,x)=\sum_{k=0}^{p-1}y_{k}\frac{1}{(i\lambda)^{k}}s_{k}(i\lambda x)+\frac{i}{\lambda^{p-1}}\int_{0}^{x}s_{p-1}(i\lambda(x-t))f(t)dt.
     \end{equation}
   \end{lemma}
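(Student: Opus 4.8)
The plan is to exploit linearity of the operator $(D/i)^p - \lambda^p$ and write the solution as $y = y_0 + y_1$, where $y_0$ solves the homogeneous equation with the prescribed initial data (\ref{eq6.2}) and $y_1$ solves the inhomogeneous equation with zero initial data; uniqueness for the $p$th order Cauchy problem then guarantees this sum is the full solution, so it suffices to construct and check each piece separately.

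For $y_0$, Proposition \ref{pro3.1} shows that the substitution $z = i\lambda x$ reduces the homogeneous equation to $D^p s = s$, whose fundamental system is $\{s_k\}$ by Proposition \ref{pro3.2}(i); hence each $u_k(x) := (i\lambda)^{-k} s_k(i\lambda x)$ is a homogeneous solution, and I set $y_0 = \sum_k y_k u_k$. The normalizing factor $(i\lambda)^{-k}$ is exactly what makes the initial data come out right: the chain rule gives $u_k^{(n)}(0) = (i\lambda)^{n-k} s_k^{(n)}(0)$, and by (\ref{eq3.5}) this equals $\delta_{nk}$, so $y_0^{(n)}(0) = y_n$ for $n = 0, \ldots, p-1$.

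For $y_1$ I would verify the Duhamel term $y_1 = \tfrac{i}{\lambda^{p-1}} \int_0^x s_{p-1}(i\lambda(x-t)) f(t)\,dt$ directly. Writing $I_m(x) = \int_0^x s_m(i\lambda(x-t)) f(t)\,dt$ and differentiating under the integral, the boundary term is $s_m(0) f(x)$, which vanishes for $1 \le m \le p-1$ by (\ref{eq3.5}), while the interior term invokes $D s_m = s_{m-1}$ from (\ref{eq3.6}). This gives the recursion $I_m' = i\lambda\, I_{m-1}$ for $m \ge 1$, together with the wrap-around $I_0' = f(x) + i\lambda\, I_{p-1}$ coming from $D s_0 = s_{p-1}$ and $s_0(0) = 1$. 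Iterating from $m = p-1$ yields $I_{p-1}^{(n)} = (i\lambda)^n I_{p-1-n}$ for $n \le p-1$ (so every derivative of $y_1$ up to order $p-1$ vanishes at $x = 0$), and one further step gives $I_{p-1}^{(p)} = (i\lambda)^{p-1} f(x) + (i\lambda)^p I_{p-1}$; substituting into $(D/i)^p y_1$ and tracking the powers of $i$ then collapses to $\lambda^p y_1 + f(x)$, as required. Adding $y_0 + y_1$ produces (\ref{eq6.10}).

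I expect the only genuine difficulty to be bookkeeping: keeping the cyclic shift $D s_0 = s_{p-1}$ at the bottom of the chain (\ref{eq3.6}) and the compensating factors of $i\lambda$ and $1/i$ straight through the $p$-fold differentiation. The variation-of-constants derivation carried out just above meets the same issue in another guise, where the labor is the inversion of $W(\lambda, x)$; that inversion is tractable only because the matrix is geometric in the roots of unity $\zeta_k$, so its inverse is the rescaled conjugate matrix supplied by the unitarity of Lemma \ref{lem2.1}.
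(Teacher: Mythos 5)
Your proposal is correct, and it genuinely differs from the paper's argument in the treatment of the inhomogeneous term. Both you and the paper use the same splitting $y=y_{0}+y_{1}$ and handle the homogeneous part identically (the normalization $(i\lambda)^{-k}$ against the initial data (\ref{eq3.5})). The difference is that the paper \emph{derives} the Duhamel term by variation of constants: it writes $y_{1}(\lambda,x)=\sum_{k}C_{k}(x)e^{i\lambda\zeta_{k}x}$, imposes the auxiliary conditions (\ref{eq6.5})--(\ref{eq6.7}), and solves the resulting linear system (\ref{eq6.8}) by explicitly inverting the matrix $W(\lambda,x)$ --- an inversion that is tractable precisely because $W(\lambda,x)$ is the roots-of-unity Vandermonde matrix times a diagonal exponential, so the inverse is read off from the unitarity of $U$ in Lemma \ref{lem2.1}, exactly as you surmised in your closing remark; the kernel $s_{p-1}(i\lambda(x-t))$ then appears from the identity $\frac{1}{p}\sum_{k}\zeta_{k}^{-(p-1)}e^{i\lambda\zeta_{k}(x-t)}=s_{p-1}(i\lambda(x-t))$, i.e.\ directly from Definition \ref{def3.1}. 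Your route instead \emph{verifies} the formula: differentiating $I_{m}(x)=\int_{0}^{x}s_{m}(i\lambda(x-t))f(t)\,dt$ under the integral and using $s_{m}(0)=\delta_{m0}$ together with the cyclic relations (\ref{eq3.6}) gives $I_{m}'=i\lambda I_{m-1}$ for $m\geq1$ and $I_{0}'=f+i\lambda I_{p-1}$, whence $I_{p-1}^{(n)}=(i\lambda)^{n}I_{p-1-n}$ for $n\leq p-1$ (so the zero initial data hold, since $I_{m}(0)=0$) and $I_{p-1}^{(p)}=(i\lambda)^{p-1}f+(i\lambda)^{p}I_{p-1}$, which after dividing by $i^{p}\lambda^{p-1}/i$ collapses to $\lambda^{p}y_{1}+f$; I checked the powers of $i\lambda$ and they are right. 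Your argument is shorter and more elementary --- Leibniz rule plus (\ref{eq3.5}), (\ref{eq3.6}), no matrix inversion --- at the cost of pulling the kernel from the statement rather than discovering it; the paper's computation explains where $s_{p-1}$ comes from. One point worth making explicit in your version (the paper is equally silent about it): the word ``the'' in the lemma rests on uniqueness of solutions of the linear Cauchy problem, which is standard but should be cited once you argue by constructing $y_{0}$ and $y_{1}$ separately.
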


  \section{Self-adjointness}\label{s:7}

  Let $M$ be the set of $p$ times differentiable functions from $L^{2}(0,l)$ $(0<l<\infty)$. Use the Lagrange formula for differential operation $(-iD)^{p}$ acting on $M$.
    \begin{multline}\label{eq7.1}
    \int_{0}^{l}(-iD)^{p}y(x)\overline{z(x)}dx=\frac{1}{i^{p}}[Q_{l}(y,z)-Q_{0}(y,z)]+\\
      \int_{0}^{l}y(x)\overline{(-iD)^{p}z(x)}dx,\quad (y,z\in M),
    \end{multline}
  where
    \begin{multline}\label{eq7.2}
    Q_{x}(y,z)=\\
    y^{(p-1)}(x)\overline{z(x)}-y^{(p-2)}(x)\overline{z(x)}'+\cdots+(-1)^{p-1}y(x)
    \overline{z(x)}^{(p-1)},
    \end{multline}
  here $Q_{l}(y,z)=Q_{x}(y,z)|_{x=l}$,\; $Q_{0}(y,z)=Q_{x}(y,z)|_{x=0}$.

  The matrix $J_{p}$ of the bilinear form (\ref{eq7.2}) has the size $p\times p$
    \begin{equation}\label{eq7.3}
    J_{p}=
       \begin{pmatrix}
            0     &      0     & \cdots &    0   & 1  \\
            0     &      0     & \cdots &   -1   & 0  \\
         \vdots   &   \vdots   & \ddots & \vdots & \vdots \\
            0     & (-1)^{p-2} & \cdots &   0    &    0   \\
       (-1)^{p-1} &      0     & \cdots &   0    &    0
       \end{pmatrix},
    \end{equation}
  and satisfies the conditions
    \begin{equation}\label{eq7.4}
    J_{p}^{\ast}=(-1)^{p-1}J_{p},\quad J_{p}^{2}=I\;\,(p=2k+1),\quad
       J_{p}^{2}=-I\;\,(p=2k).
    \end{equation}

  \textbf{Case} $p=2k$.\;
  Since $(iJ_{p})^{\ast}=iJ_{p}$ and $(iJ_{p})^{2}=I$, the eigenvalues of the matrix $iJ_{p}$ are $\pm 1$. Therefore, the eigenvalues of  $J_{p}$ are $\pm i$.

  Orthonormal eigenvectors
    \begin{equation}\label{eq7.5}
      \begin{array}{c}
      e_{1}^{+}=\frac{1}{\sqrt 2}\text{col}\,(1,0,\ldots,0,0,\ldots,0,i) \\
      e_{2}^{+}=\frac{1}{\sqrt 2}\text{col}\,(0,1,\ldots,0,0,\ldots,-i,0) \\
      \cdots\\
      e_{k}^{+}=\frac{1}{\sqrt 2}\text{col}\,(0,0,\ldots,1,(-1)^{k-1}i,\ldots,0,0)
      \end{array}
    \end{equation}
   form  eigen subspace $E_{+}^{(k)}$ $(\dim E_{+}^{(k)}=k)$ of the operator $J_{p}=J_{2k}$ corresponding to the eigenvalue $\lambda=+i$, $J_{2k}e_{s}^{{+}}=ie_{s}^{{+}}$ $(1\leq s\leq k)$.

   Likewise, the orthonormal eigenvectors
    \begin{equation}\label{eq7.6}
      \begin{array}{c}
      e_{1}^{-}=\frac{1}{\sqrt 2}\text{col}\,(-1,0,\ldots,0,0,\ldots,0,i) \\
      e_{2}^{-}=\frac{1}{\sqrt 2}\text{col}\,(0,-1,\ldots,0,0,\ldots,-i,0) \\
      \cdots\\
      e_{k}^{-}=\frac{1}{\sqrt 2}\text{col}\,(0,0,\ldots,-1,(-1)^{k-1}i,\ldots,0,0)
      \end{array}
    \end{equation}
   form  eigen subspace $E_{-}^{(k)}$ $(\dim E_{-}^{(k)}=k)$ corresponding to the eigenvalue $\lambda=-i$.

   Therefore,
    \begin{equation}\label{eq7.7}
      E^{p}=E^{p}_{+}\oplus E^{p}_{-}.
    \end{equation}
   Any vector $h\in E^{p}$ has a unique decomposition
      \begin{equation}\label{eq7.8}
      h=h_{+}+h_{-}, \quad h_{\pm}=P_{\pm}h,
      \end{equation}
   where $P_{\pm}$ are orthoprojectors on $E_{\pm}$. Therefore,
      $$
      P_{+}h=\sum_{s=1}^{k}h_{s}^{+}e_{s}^{+},\quad P_{-}h=\sum_{s=1}^{k}h_{s}^{-}e_{s}^{-},
      $$
   where $e_{s}^{\pm}$ are the vectors (\ref{eq7.5}), (\ref{eq7.6}), and
     \begin{equation}\label{eq7.9}
     h_{s}^{\pm}=\langle h,e_{s}^{\pm}\rangle=\frac{\pm h_{s}+(-1)^{p-s}ih_{p-s+1}}{\sqrt 2},
     \quad l=\text{col}\,(h_{1},\ldots,h_{p}).
     \end{equation}
   From (\ref{eq7.8}) it follows that the quadratic form (\ref{eq7.8}) has the form
     \begin{equation}\label{eq7.10}
     \langle J_{p}h,h\rangle=i\{\langle h_{+},h_{+}\rangle-\langle h_{-},h_{-}\rangle\}.
     \end{equation}
   Equality (\ref{eq7.10}) allows us to distinguish two types of boundary conditions  which ensure the self-adjointness of the operator $(-iD)^{p}$ for an even $p=2k$.

   The first type of boundary conditions is determined by those $h$ that annul the quadratic form
   $\langle J_{p}h,h\rangle$. Let
     \begin{equation}\label{eq7.11}
     h_{V}=h_{+}+Vh_{+},\quad (h_{-}=Vh_{+}),
     \end{equation}
   where $V: E_{+}\rightarrow E_{-}$ is an arbitrary unitary operator. Given (\ref{eq7.10}), we get $\langle J_{p}h_{V},h_{V}\rangle=0$. The operator $V$ has the form $V=V_{+}^{\ast}V_{0}V_{-}$, where $V_{\pm}: E_{\pm}^{(k)}\rightarrow E^{(k)}$ are unitary mappings transforming  $\{e_{s}^{\pm}\}_{s=1}^{k}$ into the standard basis $\{e_{s}\}_{s=1}^{k}$ of the space $E^{(k)}$ $(\dim E^{(k)}=k)$ and $V_{0}$ is a unitary operator in $E^{(k)}$.
   Since $h_{-}=Vh_{+}$, then from (\ref{eq7.9}) it follows
     \begin{equation}\label{eq7.12}
     \widetilde{h}_{1}+iJ_{k}\widehat{h}_{1}=V_{0}(-\widetilde{h}_{1}+iJ_{k}\widehat{h}_{1}),
     \end{equation}
   where $J_{k}$ is (\ref{eq7.3})  for $p=k$ and $\widetilde{h}_{1},\widehat{h}_{1}\in E^{(k)}$ are vectors
     \begin{equation}\label{eq7.13}
     \widetilde{h}_{1}=\text{col}\,(h_{1},\ldots,h_{k}),\;
     \widehat{h}_{1}=\text{col}\,(h_{k+1},\ldots,h_{2k}),\;
     (h=\widetilde{h}_{1}\oplus\widehat{h}_{1}).
     \end{equation}
   Rewrite (\ref{eq7.12}) as
     $$
     i(I-V_{0})J_{k}\widehat{h}_{1}=-(I+V_{0})\widetilde{h}_{1},
     $$
   or
     \begin{equation}\label{eq7,14}
     J_{k}\widehat{h}_{1}=B_{0}\widetilde{h}_{1},
     \end{equation}
   where
     \begin{equation}\label{eq7,15}
     B_{0}=i(I-V_{0})^{-1}(I+V_{0})
     \end{equation}
   Since $V_{0}V_{0}^{\ast}=I$, then $B_{0}^{\ast}=B_{0}$.
   \medskip

   Introduce the notations
     \begin{equation}\label{eq7.16}
     \begin{array}{c}
     \;Y(x)=\text{col}\,(y(x),\ldots,y^{(k-1)}(x),y^{(k)}(x)\ldots,y^{(p-1)}(x)),\\
     Y_{0}(x)=\text{col}\,(y(x),y'(x),\ldots,y^{(k-1)}(x))\in E^{(k)},\qquad\qquad\\
     Y_{1}(x)=\text{col}\,(y^{(k)}(x),y^{(k+1)}(x),\ldots,y^{(p-1)}(x))\in E^{(k)},\quad\\
     Y(x)=Y_{0}(x)\oplus Y_{1}(x)\in E^{(2k)}.\qquad
     \end{array}
     \end{equation}
   \begin{proposition}\label{pro7.1}
   Let $p=2k$, $Y_{0}(x)$, $Y_{1}(x)$ and $J_{k}$ be determined by the relations \emph{(\ref{eq7.16})} and \emph{(\ref{eq7.3})}. If $B_{0}$, $B_{l}$ are arbitrary self-adjoint $k\times k$-matrices, then
   on the set of functions from $L^{2}(0,l)$, $0<l< \infty$ that satisfy the boundary conditions
      \begin{equation}\label{eq7.17}
      J_{k}Y_{1}(0)=B_{0}Y_{0}(0),\quad J_{k}Y_{1}(l)=B_{l}Y_{0}(l),
      \end{equation}
   the operator $(-iD)^{p}$ is self-adjoint.
   \end{proposition}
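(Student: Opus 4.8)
The plan is to extract self-adjointness from the Lagrange formula \eqref{eq7.1}. Writing $L=(-iD)^p$ and regarding the boundary form \eqref{eq7.2} as $Q_x(y,z)=\langle J_pY(x),Z(x)\rangle$, where $Y(x)=\text{col}(y(x),\ldots,y^{(p-1)}(x))$ and $\langle\cdot,\cdot\rangle$ is the standard Hermitian product of $\mathbb{C}^p$ (conjugate-linear in the second slot), formula \eqref{eq7.1} reads $\langle Ly,z\rangle-\langle y,Lz\rangle=i^{-p}[Q_l(y,z)-Q_0(y,z)]$. Hence $L$ is symmetric on the set $\mathcal{D}$ of $W_2^p$-functions obeying \eqref{eq7.17} exactly when $Q_0(y,z)=Q_l(y,z)=0$ on $\mathcal{D}$, and I would verify this vanishing separately at each endpoint. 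For $p=2k$ the matrix \eqref{eq7.3} has, with respect to the splitting \eqref{eq7.16}, the $k\times k$ block form $J_{2k}=\bigl(\begin{smallmatrix}0&J_k\\(-1)^kJ_k&0\end{smallmatrix}\bigr)$, which is read off directly from the anti-diagonal pattern of \eqref{eq7.3} and is consistent with $J_{2k}^{\ast}=-J_{2k}$ through $J_k^{\ast}=(-1)^{k-1}J_k$ from \eqref{eq7.4}.

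Using this block form, the first step yields $Q_0(y,z)=\langle J_kY_1(0),Z_0(0)\rangle+(-1)^k\langle J_kY_0(0),Z_1(0)\rangle$. I would substitute the boundary condition $J_kY_1(0)=B_0Y_0(0)$ in the first term, and in the second term move $J_k$ onto the other factor via $J_k^{\ast}=(-1)^{k-1}J_k$ before applying $J_kZ_1(0)=B_0Z_0(0)$. The two terms then equal $\langle B_0Y_0(0),Z_0(0)\rangle$ and $(-1)^{2k-1}\langle Y_0(0),B_0Z_0(0)\rangle=-\langle B_0Y_0(0),Z_0(0)\rangle$, the last equality using $B_0=B_0^{\ast}$; they cancel. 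The identical computation with $B_l$ gives $Q_l(y,z)=0$, so $L$ is symmetric on $\mathcal{D}$. Since functions in the minimal domain have vanishing boundary data, they satisfy \eqref{eq7.17} trivially, so $L_{\min}\subseteq L|_{\mathcal{D}}\subseteq L_{\max}$.

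It remains to upgrade symmetry to self-adjointness, which is the real content. The endpoints being regular, the minimal operator has deficiency indices $(p,p)$ with $L_{\min}^{\ast}=L_{\max}$ acting on $W_2^p(0,l)$, and by the Glazman--Krein--Naimark extension theory a restriction $L_{\min}\subseteq A\subseteq L_{\max}$ is self-adjoint iff the image of its domain under $y\mapsto(Y(0),Y(l))$ is a maximal isotropic (Lagrangian) subspace of the boundary form $Q_l-Q_0$. After multiplication by $i$ this form is a nondegenerate Hermitian form of signature $(p,p)$ on $\mathbb{C}^{2p}$, because $iJ_p$ has eigenvalues $\pm1$ each of multiplicity $k$; consequently its maximal isotropic subspaces have dimension $p$. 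Isotropy was shown above. For maximality I would observe that $J_k$ is invertible, so \eqref{eq7.17} determines $Y_1$ from $Y_0$ at each endpoint and the admissible pairs $(Y(0),Y(l))$ form a subspace of dimension $k+k=p$. A $p$-dimensional isotropic subspace in signature $(p,p)$ is automatically Lagrangian, whence $\operatorname{dom} A=\operatorname{dom}A^{\ast}$ and $L|_{\mathcal{D}}$ is self-adjoint. The main obstacle is precisely this maximality argument, together with correctly pinning down the deficiency indices and the signature of the boundary form; the isotropy computation itself is routine once the block decomposition of $J_{2k}$ is available.
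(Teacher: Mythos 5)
Your proposal is correct, but it follows a genuinely different route from the paper's. The paper never states a formal proof of Proposition \ref{pro7.1}; instead, the derivation preceding it \emph{constructs} the conditions (\ref{eq7.17}): it diagonalizes $J_{p}$ ($p=2k$) using the eigenvectors (\ref{eq7.5})--(\ref{eq7.6}) for the eigenvalues $\pm i$, writes the boundary form as $\langle J_{p}h,h\rangle=i\{\|h_{+}\|^{2}-\|h_{-}\|^{2}\}$ (formula (\ref{eq7.10})), observes that this form vanishes exactly on graphs $h_{-}=Vh_{+}$ of unitaries $V:E_{+}\rightarrow E_{-}$, and then converts that unitary parametrization into the form $J_{k}\widehat{h}_{1}=B_{0}\widetilde{h}_{1}$ via the Cayley transform $B_{0}=i(I-V_{0})^{-1}(I+V_{0})$, whose self-adjointness follows from the unitarity of $V_{0}$. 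You use none of this machinery: you read off the block decomposition $J_{2k}=\bigl(\begin{smallmatrix}0&J_{k}\\(-1)^{k}J_{k}&0\end{smallmatrix}\bigr)$ (which the paper never writes down, but which is correct and consistent with (\ref{eq7.4})), verify by a direct two-line computation that $Q_{0}=Q_{l}=0$ on the domain, and then handle maximality through deficiency indices $(p,p)$ and the Glazman--Krein--Naimark criterion, using invertibility of $J_{k}$ for the dimension count. Each approach buys something the other does not. Your direction of argument (arbitrary self-adjoint $B_{0},B_{l}$ $\Rightarrow$ self-adjoint operator) matches the proposition as stated, whereas the paper's Cayley-transform derivation runs the other way and tacitly needs surjectivity of $V_{0}\mapsto i(I-V_{0})^{-1}(I+V_{0})$ onto all self-adjoint matrices; moreover, you make explicit the upgrade from symmetry to self-adjointness, which the paper leaves entirely implicit and which, as you note, is the real content. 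Conversely, the paper's spectral/unitary parametrization is a classification: it exhibits where the conditions (\ref{eq7.17}) come from and also covers the degenerate conditions (e.g., $Y_{0}(0)=0$, corresponding to $I-V_{0}$ singular) that the matrices $B_{0},B_{l}$ cannot reach.
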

   Boundary conditions (\ref{eq7.17}) refer to \emph{separated boundary conditions}.
   \medskip

   The second type of self-adjoint boundary conditions is obtained as follows. The non-integral terms of the Lagrange formula (\ref{eq7.1}1) have the form
     \begin{multline}\nonumber
     Q_{l}(y,z)-Q_{0}(y,z)=\langle J_{p}Y(l),Z(l)\rangle-\langle J_{p}Y(0),Z(0)\rangle=\\
       \langle Y_{+}(l),Z_{+}(l)\rangle-\langle Y_{-}(l),Z_{-}(l)\rangle-
       \langle Y_{+}(0),Z_{+}(0)\rangle+\langle Y_{-}(0),Z_{-}(0)\rangle.
     \end{multline}
   This expression is zero if
    \begin{equation}\label{eq7.18}
      \begin{array}{c}
      (1)\quad \langle Y_{+}(l),Z_{+}(l)\rangle=\langle Y_{+}(0),Z_{+}(0)\rangle,\\
      (2)\quad \langle Y_{-}(l),Z_{-}(l)\rangle=\langle Y_{-}(0),Z_{-}(0)\rangle.
      \end{array}
    \end{equation}
   Equalities (\ref{eq7.18}) are satisfied when
      \begin{equation}\nonumber
      \begin{array}{c}
       Y_{+}(l)=VY_{+}(0)\quad (Z_{+}(l)=VZ_{+}(0)),\\
       Y_{-}(l)=\widetilde{V}Y_{-}(0)\quad (Z_{-}(l)=\widetilde{V}Z_{-}(0)),
      \end{array}
      \end{equation}
   where $V$ and $\widetilde{V}$ are unitary operators. Using (\ref{eq7.12}) and (\ref{eq7.9}), we rewrite these equalities in the form
     \begin{equation}\nonumber
      \begin{array}{c}
       Y_{0}(l)+iJ_{k}Y_{1}(j)=V(Y_{0}(0)+iJ_{k}Y_{1}(0)),\\
       -Y_{0}(l)+iJ_{k}Y_{1}(j)=\widetilde{V}(-Y_{0}(0)+iJ_{k}Y_{1}(0)).
      \end{array}
      \end{equation}

   \begin{proposition}\label{pro7.2}
   Let $p=2k$, $Y_{0}(x)$, $Y_{1}(x)$ and $J_{k}$ be determined by the relations \emph{(\ref{eq7.16})} and \emph{(\ref{eq7.3})}. If $V$, $\widetilde{V}$ are arbitrary unitary operators in $E^{(k)}$, then on the set of functions from $L^{2}(0,l)$, $0<l< \infty$ that satisfy the boundary conditions
      \begin{equation}\label{eq7.19}
       \begin{array}{c}
       2Y_{0}(l)=(V+\widetilde{V})Y_{0}(0)+i(V-\widetilde{V})J_{k}Y_{1}(0),\\
       2iJ_{k}Y_{1}(l)=(V-\widetilde{V})Y_{0}(0)+i(V+\widetilde{V})J_{k}Y_{1}(0),
       \end{array}
      \end{equation}
   the operator $(-iD)^{p}$ is self-adjoint.
   \end{proposition}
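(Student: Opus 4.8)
The plan is to follow the same route that produced Proposition \ref{pro7.1}, but now retaining the full boundary form rather than annihilating it. By the Lagrange formula (\ref{eq7.1}), the operator $(-iD)^{p}$ is symmetric on a prescribed domain exactly when the non-integral term $Q_{l}(y,z)-Q_{0}(y,z)=\langle J_{p}Y(l),Z(l)\rangle-\langle J_{p}Y(0),Z(0)\rangle$ vanishes for all admissible $y,z$, and self-adjoint when in addition the domain is maximal with this property. Since $J_{p}=i(P_{+}-P_{-})$ with $P_{\pm}$ the orthoprojectors onto $E_{\pm}^{(k)}$, the boundary form decouples into the two independent pieces displayed in (\ref{eq7.18}). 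Thus I would first observe that it suffices to secure $\langle Y_{+}(l),Z_{+}(l)\rangle=\langle Y_{+}(0),Z_{+}(0)\rangle$ and $\langle Y_{-}(l),Z_{-}(l)\rangle=\langle Y_{-}(0),Z_{-}(0)\rangle$ separately.

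Next I would impose the ansatz $Y_{+}(l)=VY_{+}(0)$ and $Y_{-}(l)=\widetilde{V}Y_{-}(0)$, together with the identical relations for $z$, where $V,\widetilde{V}$ are unitary on $E^{(k)}$. Unitarity leaves each inner product in (\ref{eq7.18}) invariant, so the boundary form vanishes and symmetry is immediate. Using the coordinate identification (\ref{eq7.9}), which matches the components $Y_{+}$ and $Y_{-}$ with the vectors $Y_{0}+iJ_{k}Y_{1}$ and $-Y_{0}+iJ_{k}Y_{1}$ in $E^{(k)}$, the two ansatz relations become
\begin{equation}\nonumber
\begin{array}{c}
Y_{0}(l)+iJ_{k}Y_{1}(l)=V\bigl(Y_{0}(0)+iJ_{k}Y_{1}(0)\bigr),\\
-Y_{0}(l)+iJ_{k}Y_{1}(l)=\widetilde{V}\bigl(-Y_{0}(0)+iJ_{k}Y_{1}(0)\bigr).
\end{array}
\end{equation}
Adding and subtracting these two equations cancels the cross terms and reproduces precisely the pair (\ref{eq7.19}): the sum gives $2iJ_{k}Y_{1}(l)=(V-\widetilde{V})Y_{0}(0)+i(V+\widetilde{V})J_{k}Y_{1}(0)$, and the difference gives $2Y_{0}(l)=(V+\widetilde{V})Y_{0}(0)+i(V-\widetilde{V})J_{k}Y_{1}(0)$. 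This is a routine rearrangement.

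The substantive point, and the step I expect to be the main obstacle, is upgrading symmetry to genuine self-adjointness, that is, proving the domain cut out by (\ref{eq7.19}) is maximal. I would argue by dimension. The boundary data $(Y(0),Y(l))$ fill a $2p$-dimensional space equipped with the nondegenerate Hermitian form $\langle J_{p}\cdot,\cdot\rangle\ominus\langle J_{p}\cdot,\cdot\rangle$, and self-adjoint boundary conditions correspond bijectively to its maximal isotropic (Lagrangian) subspaces, of dimension $p$. The relations (\ref{eq7.19}) amount to $2k=p$ independent scalar constraints, hence carve out a subspace of exactly the required dimension $p$, while the computation above shows it is isotropic. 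It is precisely the unitarity of $V$ and $\widetilde{V}$ that guarantees both isotropy and that the subspace is the graph of a norm-preserving map on each $E_{\pm}$ block, so that no collapse of dimension can occur. Invoking the standard correspondence between self-adjoint extensions and maximal isotropic subspaces of the boundary form then promotes the symmetric operator to a self-adjoint one, which completes the proof.
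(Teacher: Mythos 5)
Your proposal follows essentially the same route as the paper: decompose the boundary form $Q_{l}-Q_{0}$ via the $\pm i$ eigenspaces of $J_{p}$, impose $Y_{+}(l)=VY_{+}(0)$, $Y_{-}(l)=\widetilde{V}Y_{-}(0)$ with $V,\widetilde{V}$ unitary so that both equalities in (\ref{eq7.18}) hold, translate through (\ref{eq7.9}) and (\ref{eq7.12}), and add/subtract to obtain (\ref{eq7.19}) --- your algebra here matches the paper's exactly. The one genuine addition is your final paragraph: the paper stops at the vanishing of the boundary form (i.e., symmetry) and simply asserts self-adjointness, whereas you correctly supply the missing maximality step, noting that (\ref{eq7.19}) cuts out a graph of dimension $p$ inside the $2p$-dimensional boundary-data space carrying a form of signature $(p,p)$, so the isotropic subspace is maximal and the Glazman--Krein--Naimark correspondence yields a self-adjoint operator.
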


   \begin{remark}\label{rem7.1}
   If $V=\widetilde{V}$, then equalities (\ref{eq7.19}) have the form
      \begin{equation}\label{eq7.20}
      Y_{0}(l)=VY_{0}(0),\quad J_{k}Y_{1}(l)=VJ_{k}Y_{1}(0),
      \end{equation}
   and if $V=-\widetilde{V}$, then
      \begin{equation}\label{eq7.21}
      Y_{0}(l)=iVJ_{k}Y_{1}(0),\quad J_{k}Y_{1}(l)=-iVY_{0}(0).
      \end{equation}
   \end{remark}

   Boundary conditions (\ref{eq7.19}) (as well as (\ref{eq7.20}) and (\ref{eq7.21})) refer to the so-called \emph{unseparated self-adjoint boundary conditions}.
   \medskip

   \textbf{Case} $p=2k+1$.\; From (\ref{eq7.3}), it follows that $J_{p}^{\ast}=J_{p}$ and $J_{p}^{2}=I$. Therefore, the eigenvalues of $J_{p}$ are equal to $\pm 1$.   Orthonormal eigenvectors
    \begin{equation}\label{eq7.22}
      \begin{array}{c}
      \widetilde{e_{1}}^{+}=\frac{1}{\sqrt 2}\text{col}\,(1,0,\ldots,0,0,\ldots,0,1) \\
      \widetilde{e_{2}}^{+}=\frac{1}{\sqrt 2}\text{col}\,(0,1,\ldots,0,0,\ldots,-1,0) \\
      \cdots\\
      \widetilde{e_{k}}^{+}=\frac{1}{\sqrt 2}\text{col}\,(0,0,\ldots,1,0,(-1)^{k-1},\ldots,0,0)
      \end{array}
    \end{equation}
   form  eigen subspace $\widetilde{E}_{+}^{(k)}$ $(\dim \widetilde{E}_{+}^{(k)}=k)$ of the operator $J_{p}=J_{2k+1}$ corresponding to the eigenvalue $\lambda=+1$, $J_{2k+1}\widetilde{e_{s}}^{{+}}=1\cdot \widetilde{e_{s}}^{{+}}$ $(1\leq s\leq k)$. Likewise, the orthonormal eigenvectors
      \begin{equation}\label{eq7.23}
      \begin{array}{c}
      \widetilde{e_{1}}^{-}=\frac{1}{\sqrt 2}\text{col}\,(-1,0,\ldots,0,0,\ldots,0,1) \\
      \widetilde{e_{2}}^{-}=\frac{1}{\sqrt 2}\text{col}\,(0,-1,\ldots,0,0,\ldots,-1,0) \\
      \cdots\\
      \widetilde{e_{k}}^{-}=\frac{1}{\sqrt 2}\text{col}\,(0,0,\ldots,-1,0,(-1)^{k-1},\ldots,0,0)
      \end{array}
    \end{equation}
   form  eigen subspace $\widetilde{E}_{-}^{(k)}$ $(\dim \widetilde{E}_{-}^{(k)}=k)$ of the operator $J_{p}=J_{2k+1}$ corresponding to the eigenvalue $\lambda=-1$, $J_{2k+1}\widetilde{e_{s}}^{-}=-1\cdot \widetilde{e_{s}}^{-}$ $(1\leq s\leq k)$. In addition, the operator $J_{2k+1}$ has an eigenvector
      \begin{equation}\label{eq7.24}
      \widetilde{e}_{k+1}=\frac{1}{\sqrt 2}\text{col}\,(0,0,\ldots,0,1,0,\ldots,0,0)
      \end{equation}
   and $J_{p}\widetilde{e}_{k+1}=(-1)^{k}\widetilde{e}_{k+1}$.

   The space $E^{(p)}=E^{(2k+1)}$ is the sum of three orthogonal subspaces
      $$
      E^{(p)}=\widetilde{E}_{+}^{(k)}\oplus\widetilde{E}_{-}^{(k)}\oplus\{\mu\widetilde{e}_{k+1}\}.
      $$
   Therefore, any vector $h\in E^{(p)}$ has unique decomposition
      \begin{equation}\label{eq7.25}
      h=h_{+}+h_{-}+\widetilde{h}\quad (h_{\pm}=P_{\pm}h,\;\widetilde{h}\in\{\mu\widetilde{e}_{k+1}\}),
      \end{equation}
   where $P_{\pm}$ are orthoprojectors onto subspaces $\widetilde{E}_{\pm}$ and
       \begin{equation}\label{eq7.26}
       h_{\pm}=\sum_{s=1}^{k}\widetilde{h_{s}}^{\pm}\widetilde{e_{s}}^{\pm},\quad
          \widetilde{h}=\mu\widetilde{e}_{k+1},
       \end{equation}
   where
       \begin{equation}\label{eq7.27}
       \widetilde{h_{s}}^{\pm}=\frac{\pm h_{s}+(-1)^{p-s}h_{p-s+1}}{\sqrt 2}\quad (1\leq s\leq k).
       \end{equation}
   Taking into account (\ref{eq7.25}), we obtain
       \begin{equation}\label{eq7.28}
       \langle J_{p}h,h\rangle=\langle \widetilde{h}_{+},\widetilde{h}_{+}\rangle-
            \langle \widetilde{h}_{-},\widetilde{h}_{-}\rangle+(-1)^{k}|h_{k+1}|^{2}.
       \end{equation}

   Similarly to (\ref{eq7.16}), we introduce the notation
     \begin{equation}\label{eq7.29}
     \begin{array}{c}
     Y(x)=\text{col}\,(y(x),\ldots,y^{(k-1)}(x),y^{(k)}(x),y^{(k+1)}(x),\ldots,y^{(p-1)}(x)),\\
     Y_{0}(x)=\text{col}\,(y(x),y'(x),\ldots,y^{(k-1)}(x))\in E^{(k)},\qquad\qquad\\
     Y_{1}(x)=\text{col}\,(y^{(k+1)}(x),y^{(k+2)}(x),\ldots,y^{(p-1)}(x))\in E^{(k)},\,\\
     \end{array}
     \end{equation}
    \begin{proposition}\label{pro7.3}
    Let $p=2k+1$, $Y_{0}(x)$, $Y_{1}(x)$ and $J_{k}$ be determined by the relations \emph{(\ref{eq7.29})} and \emph{(\ref{eq7.3})}. If $\theta\in \mathbb{T}=\{z\in\mathbb{C}\mid |z|=1\}$ and $B_{0}$, $B_{l}$ are arbitrary self-adjoint $k\times k$-matrices, then
   on the set of functions from $L^{2}(0,l)$, $0<l< \infty$ that satisfy the boundary conditions
     \begin{equation}\label{eq7.30}
     y^{(k)}(l)=\theta y^{(k)}(0),\quad J_{k}Y_{1}(0)=iB_{0}Y_{0}(0),\quad
          J_{k}Y_{1}(l)=iB_{1}Y_{0}(l),
     \end{equation}
   the operator $(-iD)^{p}$ is self-adjoint.
    \end{proposition}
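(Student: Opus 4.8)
The plan is to reduce self-adjointness to two independent facts. First, the non-integral terms in the Lagrange identity (\ref{eq7.1}) must vanish on the domain cut out by (\ref{eq7.30}); this gives symmetry. Second, these boundary conditions must single out a subspace of the boundary data of the exact dimension $p$, which forces the symmetric restriction to be maximal, hence self-adjoint. Throughout I identify $Q_x(y,z)=\langle J_pY(x),Z(x)\rangle$, where $Y(x),Z(x)\in E^{(p)}$ are the vectors of derivatives as in (\ref{eq7.29}), so that the object to be annihilated is $\langle J_pY(l),Z(l)\rangle-\langle J_pY(0),Z(0)\rangle$.

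The first step is to polarize the quadratic form (\ref{eq7.28}). Writing $Y=Y_{+}+Y_{-}+\widetilde{Y}$ as in (\ref{eq7.25}) and using (\ref{eq7.27}), one gets
\[
\langle J_pY(x),Z(x)\rangle=\langle Y_{+}(x),Z_{+}(x)\rangle-\langle Y_{-}(x),Z_{-}(x)\rangle+(-1)^{k}y^{(k)}(x)\overline{z^{(k)}(x)}.
\]
The crucial computational step is to re-express the $(\pm)$-part through $Y_{0},Y_{1}$ and the small matrix $J_k$. Using $(J_k)_{s,k-s+1}=(-1)^{s-1}$ together with the reflection $s\mapsto p-s+1$ relating the components of $Y_{0}$ and $Y_{1}$, and noting that $(-1)^{p-s}=(-1)^{s-1}$ because $p$ is odd, the double sum should collapse to
\[
\langle Y_{+}(x),Z_{+}(x)\rangle-\langle Y_{-}(x),Z_{-}(x)\rangle=\langle Y_{0}(x),J_kZ_{1}(x)\rangle+\langle J_kY_{1}(x),Z_{0}(x)\rangle.
\]
This identity is the heart of the argument; everything afterwards is substitution.

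Next I substitute (\ref{eq7.30}). At each endpoint $J_kY_{1}=iBY_{0}$ and $J_kZ_{1}=iBZ_{0}$ with $B=B^{\ast}$, so that
\[
\langle Y_{0},J_kZ_{1}\rangle+\langle J_kY_{1},Z_{0}\rangle=\langle Y_{0},iBZ_{0}\rangle+\langle iBY_{0},Z_{0}\rangle=-i\langle BY_{0},Z_{0}\rangle+i\langle BY_{0},Z_{0}\rangle=0,
\]
so the $(\pm)$-part vanishes at $x=0$ and $x=l$ separately. For the middle term, $y^{(k)}(l)=\theta y^{(k)}(0)$, $z^{(k)}(l)=\theta z^{(k)}(0)$ and $|\theta|=1$ give $y^{(k)}(l)\overline{z^{(k)}(l)}=|\theta|^{2}y^{(k)}(0)\overline{z^{(k)}(0)}=y^{(k)}(0)\overline{z^{(k)}(0)}$, so the endpoint contributions cancel. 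Hence $Q_l(y,z)-Q_0(y,z)=0$ and the operator is symmetric on the indicated domain.

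For genuine self-adjointness I would count dimensions. Since $J_k$ is invertible, $J_kY_{1}=iBY_{0}$ expresses $Y_{1}$ through $Y_{0}$ at each endpoint, and $y^{(k)}(l)=\theta y^{(k)}(0)$ ties the two middle values together; the free parameters are $Y_{0}(0)$ and $y^{(k)}(0)$ ($k+1$ of them) together with $Y_{0}(l)$ ($k$ of them), for a total of $2k+1=p$. Thus (\ref{eq7.30}) defines a $p$-dimensional subspace of the $2p$-dimensional boundary-data space on which the boundary form vanishes, i.e.\ a maximal isotropic subspace. Invoking the standard correspondence between such subspaces and self-adjoint extensions of $(-iD)^{p}$ (whose deficiency indices are $(p,p)$) completes the proof. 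I expect the main obstacle to be the bookkeeping in the re-indexing that produces the $J_k$-form — tracking the signs $(-1)^{p-s}$, $(-1)^{s-1}$ and the index reflection between $Y_{0}$ and $Y_{1}$ — together with checking that the $p$ conditions are genuinely independent, so that the counted dimension is exactly $p$ and not smaller.
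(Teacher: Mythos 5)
Your proof is correct, and while it lives in the same framework as the paper's (decomposing the boundary form $Q_{l}-Q_{0}$ through the structure of $J_{p}$), the mechanics are genuinely different, and in one respect your argument is more complete. The paper proves the vanishing of the boundary form by \emph{parametrizing} the annihilating vectors: it takes $h=h_{+}+Vh_{+}$ with $V$ unitary and recovers the conditions $J_{k}\widehat{h}_{1}=iB_{0}\widetilde{h}_{1}$ through the Cayley transform $B_{0}=i(I-V_{0})^{-1}(I+V_{0})$; read in the direction actually required by the proposition (arbitrary self-adjoint $B_{0}$ given in advance), this silently uses the inverse Cayley transform $V_{0}=(B_{0}-iI)(B_{0}+iI)^{-1}$. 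You instead verify the vanishing \emph{directly}, via the sesquilinear identity
\[
\langle J_{p}Y,Z\rangle=\langle Y_{0},J_{k}Z_{1}\rangle+\langle J_{k}Y_{1},Z_{0}\rangle
+(-1)^{k}y^{(k)}\,\overline{z^{(k)}},
\]
which is indeed valid: writing $(J_{p}Y)_{s}=(-1)^{s-1}y^{(p-s)}$, splitting the sum at $s=k+1$, and using $(-1)^{p-s}=(-1)^{s-1}$ for odd $p$, both sides equal $\sum_{s=1}^{k}(-1)^{s-1}\bigl(y^{(s-1)}\overline{z^{(p-s)}}+y^{(p-s)}\overline{z^{(s-1)}}\bigr)+(-1)^{k}y^{(k)}\overline{z^{(k)}}$; then $B^{\ast}=B$ kills the endpoint contributions exactly as you compute (your version also has the conjugations and signs right, where the paper's (\ref{eq7.31}) contains slips). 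The second difference is more substantive: the paper stops once the boundary form vanishes, i.e.\ at symmetry, leaving maximality implicit in the unitary parametrization, whereas you close the gap explicitly by checking that (\ref{eq7.30}) consists of $p=2k+1$ independent conditions, so the admissible boundary data form a $p$-dimensional neutral subspace for a form of signature $(p,p)$, hence a maximal neutral one, which by the Glazman--Krein--Naimark theory (the deficiency indices of the minimal operator on a finite interval being $(p,p)$) yields self-adjointness rather than mere symmetry. What each approach buys: yours is logically self-contained and matched to the proposition's hypotheses; the paper's Cayley-transform route carries the classification message of Section \ref{s:7} --- that conditions of this shape are precisely those induced by unitaries --- at the cost of leaving both the inverse direction and the maximality step to the reader.
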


   \begin{proof}
   From (\ref{eq7.28}) it follows that
     \begin{multline}\label{eq7.31}
     Q_{l}(y,z)-Q_{0}(y,z)=\langle J_{p}Y(l),Z(l)\rangle-\langle J_{p}Y(0),Z(0)\rangle=\\
     \langle Y_{+}(l),Z_{+}(l)\rangle-\langle Y_{-}(l),Z_{-}(l)\rangle+(-1)^{k}y^{(k)}(l)z^{(k)}(l)-\quad\\
      \langle Y_{+}(0),Z_{+}(0)\rangle-\langle Y_{-}(0),Z_{-}(0)\rangle+(-1)^{k}y^{(k)}(0)z^{(k)}(0).
     \end{multline}
   This expression vanishes if
     $$
     \langle Y_{+}(l),Z_{+}(l)\rangle=\langle Y_{-}(l),Z_{-}(l)\rangle,
     \quad y^{(k)}(l)=\theta y^{(k)}(0),\quad (\theta\in\mathbb{T})
     $$
   and $\langle Y_{+}(0),Z_{+}(0)\rangle=\langle Y_{-}(0),Z_{-}(0)\rangle$.

   Transform, for example, the first relation. We have
     \begin{equation}\label{eq7.32}
     \langle h_{+},h_{+}\rangle-\langle h_{-},h_{-}\rangle=0.
     \end{equation}
   Obviously, this relation is satisfied for vectors of the form
     \begin{equation}\label{eq7.33}
     h=h_{+}+Vh_{+},
     \end{equation}
   where $V:\widetilde{E}_{+}^{(k)}\rightarrow \widetilde{E}_{-}^{(k)}$ is an arbitrary unitary operator. The equality $h_{-}=Vh_{+}$ means that (see (\ref{eq7.12}))
     $$
     \widetilde{h}_{1}+J_{k}\widehat{h}_{1}=V_{0}(-\widetilde{h}_{1}+J_{k}\widehat{h}_{1}),
     $$
   where $J_{k}$ has the form (\ref{eq7.3}) and
     $$
     \widetilde{h}_{1}=\text{col}\,(h_{1},\ldots,h_{k}),\quad
     \widehat{h}_{1}=\text{col}\,(h_{k+2},\ldots,h_{2k+1}).
     $$
   This implies
     $$
     (I-V_{0})J_{k}\widehat{h}_{1}=-(I+V_{0})\widetilde{h}_{1}
     $$
   or
     $$
     J_{k}\widehat{h}_{1}=iB_{0}\widetilde{h}_{1},
     $$
   where $B_{0}=i(I-V_{0})^{-1}(I+V_{0})$ is a self-adjoint operator.
   \end{proof}

   Relations (\ref{eq7.30}) are analogous to the separated boundary conditions for $p=2k+1$.
   \medskip

   If in equality (\ref{eq7.31}) we put
     $$
     \langle Y_{+}(l),Z_{+}(l)\rangle=\langle Y_{+}(0),Z_{+}(0)\rangle,
     \quad y^{(k)}(l)=\theta y^{(k)}(0),\quad (\theta\in\mathbb{T})
     $$
   and $\langle Y_{-}(l),Z_{-}(l)\rangle=\langle Y_{-}(0),Z_{-}(0)\rangle$, then $Q_{l}(y,z)-Q_{0}(y,z)=0$. Then, similarly to Proposition \ref{pro7.2}, we obtain the following statement.

     \begin{proposition}\label{eq7.4}
      Let $p=2k+1$, $Y_{0}(x)$, $Y_{1}(x)$ and $J_{k}$ be determined by the relations \emph{(\ref{eq7.29})} and \emph{(\ref{eq7.3})}. If $V$ and $\widetilde{V}$ are arbitrary unitary operators in $E^{(k)}$, then on the set of functions from $L^{2}(0,l)$, $0<l< \infty$ that satisfy the boundary conditions
     \begin{equation}\label{eq7.34}
       \begin{array}{c}
       y^{(k)}(l)=\theta y^{(k)}(0)\quad (\theta\in \mathbb{T}=\{z\in\mathbb{C}\mid |z|=1\}),\\
       2Y_{0}(l)=(V+\widetilde{V})Y_{0}(0)+(V-\widetilde{V})J_{k}Y_{1}(0),\\
       2J_{k}Y_{1}(l)=(V-\widetilde{V})Y_{0}(0)+(V+\widetilde{V})J_{k}Y_{1}(0),
       \end{array}
     \end{equation}
   the operator $(-iD)^{p}$ is self-adjoint.
     \end{proposition}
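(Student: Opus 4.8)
The plan is to follow the same route as in Propositions \ref{pro7.2} and \ref{pro7.3}: reduce the assertion to the vanishing of the non-integral terms $Q_l(y,z)-Q_0(y,z)$ in the Lagrange formula (\ref{eq7.1}) on the domain cut out by (\ref{eq7.34}), and then verify that (\ref{eq7.34}) imposes exactly $p=2k+1$ independent boundary conditions, so that the resulting symmetric restriction is in fact a self-adjoint extension of the minimal operator.

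First I would rewrite the boundary form in the block-diagonalized shape (\ref{eq7.31}) furnished by the eigendecomposition $E^{(p)}=\widetilde E_+^{(k)}\oplus\widetilde E_-^{(k)}\oplus\{\mu\widetilde e_{k+1}\}$ of $J_p$. As indicated in the text preceding the statement, $Q_l-Q_0$ vanishes as soon as
\begin{equation}\nonumber
\langle Y_+(l),Z_+(l)\rangle=\langle Y_+(0),Z_+(0)\rangle,\quad \langle Y_-(l),Z_-(l)\rangle=\langle Y_-(0),Z_-(0)\rangle,
\end{equation}
together with $y^{(k)}(l)=\theta y^{(k)}(0)$, $|\theta|=1$; this last condition is precisely what cancels the two isolated terms $(-1)^k|y^{(k)}(\cdot)|^2$ coming from the one-dimensional definite summand $\{\mu\widetilde e_{k+1}\}$. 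The two inner-product equalities are secured by imposing the unitary relations $Y_+(l)=VY_+(0)$ and $Y_-(l)=\widetilde V Y_-(0)$, since unitaries preserve the norm.

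The next step is purely computational: translate these $\pm$-relations into the stated matrix form. Using the analogue of (\ref{eq7.12}) for the odd case (the factor $i$ now absent because the eigenvalues of $J_p$ are $\pm1$), one has $Y_\pm=\tfrac{1}{\sqrt2}(\pm Y_0+J_kY_1)$, so that $Y_+(l)=VY_+(0)$ and $Y_-(l)=\widetilde VY_-(0)$ become
\begin{equation}\nonumber
Y_0(l)+J_kY_1(l)=V\bigl(Y_0(0)+J_kY_1(0)\bigr),\quad -Y_0(l)+J_kY_1(l)=\widetilde V\bigl(-Y_0(0)+J_kY_1(0)\bigr).
\end{equation}
Subtracting and adding these two identities reproduces exactly the last two lines of (\ref{eq7.34}), just as the passage from the $\pm$-form to (\ref{eq7.19}) in the even case. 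This already establishes symmetry of $(-iD)^p$ on the domain defined by (\ref{eq7.34}).

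Finally, to upgrade symmetry to self-adjointness I would count the constraints. The scalar relation on $y^{(k)}$ is one condition, and each of the two vector relations is $k$-dimensional, giving $1+k+k=2k+1=p$ independent boundary conditions; combined with the vanishing of $Q_l-Q_0$, this identifies the domain with a Lagrangian (maximal isotropic) subspace of the boundary-data space $\mathbb C^{2p}$ equipped with the form $\langle J_p\,\cdot\,,\cdot\rangle$, hence a self-adjoint extension of the minimal operator, whose deficiency indices are $(p,p)$ on the finite interval. The main obstacle I anticipate is exactly this last maximality check in the \emph{odd} case: because $J_{2k+1}$ carries the extra definite direction $\widetilde e_{k+1}$ with eigenvalue $(-1)^k$, one must verify that the single unimodular condition $y^{(k)}(l)=\theta y^{(k)}(0)$ neither over- nor under-determines that one-dimensional summand, so that the total count stays equal to $p$ and the isotropic subspace is genuinely Lagrangian rather than merely neutral.
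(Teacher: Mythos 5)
Your proposal is correct and follows essentially the same route as the paper: the paper likewise makes the boundary form (\ref{eq7.31}) vanish by requiring $\langle Y_{+}(l),Z_{+}(l)\rangle=\langle Y_{+}(0),Z_{+}(0)\rangle$, $\langle Y_{-}(l),Z_{-}(l)\rangle=\langle Y_{-}(0),Z_{-}(0)\rangle$ and $y^{(k)}(l)=\theta y^{(k)}(0)$, realizes these via unitaries $Y_{+}(l)=VY_{+}(0)$, $Y_{-}(l)=\widetilde{V}Y_{-}(0)$, and then converts to (\ref{eq7.34}) by exactly the adding/subtracting algebra of Proposition \ref{pro7.2}. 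Your final count of $1+k+k=p$ independent conditions and the maximal-isotropic (Lagrangian) argument is a welcome addition: it supplies the passage from symmetry to self-adjointness that the paper leaves implicit in its phrase ``similarly to Proposition \ref{pro7.2}''.
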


     \begin{remark}\label{rem7.2}
     If $\widetilde{V}=V$, then conditions (\ref{eq7.34}) take the form
       $$
       Y_{0}(l)=VY_{0}(0), \quad J_{k}Y_{1}(l)=VJ_{k}Y_{1}(0),\quad y^{(k)}(l)=\theta y^{(k)}(0)\; (\theta\in \mathbb{T}).
       $$
     For $\widetilde{V}=-V$ we obtain
       $$
       Y_{0}(l)=VJ_{k}Y_{1}(0), \quad J_{k}Y_{1}(l)=-VY_{0}(0),\quad y^{(k)}(l)=\theta y^{(k)}(0)\; (\theta\in \mathbb{T}).
       $$
     \end{remark}

   \section{\large{Orthonormal bases}}\label{s:8}

   Let $W_{2}^{3}(0,l)$ be a linear manifold of thrice differentiable functions $y(x)$ from $L^{2}(0,l)$ $(0<l<\infty)$ satisfying the boundary conditions
      \begin{equation}\label{eq8.1}
      y(0)=0,\quad y'(0)=\theta y'(l),\quad y(l)=0,\quad (\theta\in \mathbb{T}).
      \end{equation}
   By Proposition \ref{pro7.3}, $L_{\theta}=(-iD)^{3}$ on $W_{2}^{3}(0,l)$ is a self-adjoint operator.

   Find the eigenfunctions of the operator $L_{\theta}$. The solution of the equation
      \begin{equation}\label{eq8.2}
      L_{\theta}y(\lambda,x)=\lambda^{3}y(\lambda,x)
      \end{equation}
   that satisfies the first boundary condition (\ref{eq8.1}) $(y(0)=0)$ has the form
      \begin{equation}\label{eq8.3}
       y(\lambda,x)=a_{1}\frac{s_{1}(i\lambda x)}{i\lambda}+
            a_{2}\frac{s_{2}(i\lambda x)}{(i\lambda)^{2}}.
      \end{equation}
   Since
     $$
     y'(\lambda,x)=a_{1}s_{0}(i\lambda x)+
            a_{2}\frac{s_{1}(i\lambda x)}{i\lambda},
     $$
   it follows from the boundary condition $y'(0)=\theta y'(l)$ that
     \begin{equation}\label{eq8.4}
     a_{1}=a_{1}\theta s_{0}(i\lambda l)+
            a_{2}\theta\frac{s_{1}(i\lambda l)}{i\lambda}.
     \end{equation}
   \begin{remark}\label{rem8.1}
   Note that $\lambda=0$ is not an eigenvalue of $L_{\theta}$ for $\theta\neq -1$. Indeed, if $\lambda=0$, then $y(0,x)=y_{0}+y_{1}x+y_{2}x^{2}$ follows from (\ref{eq8.2}). Since $y(0)=0$, then $y_{0}=0$. Considering $y(l)=0$, we get $l(y_{1}+y_{1}l)=0$. Therefore, $y(0,x)=y_{2}(x^{2}-xl)$. Since $y'(0)=\theta y'(l)$, then $-y_{2}l=\theta y_{2}l$. From here $\theta=-1$. If $\theta=-1$, then $y(\lambda,x)=C(x^{2}-xl)$ $(C\neq 0)$ is the eigenfunction of the operator $L_{-1}$, corresponding to the eigenvalue $\lambda=0$.
    \end{remark}

   \emph{In the future, we assume that $\theta\neq -1$}.
   \medskip

   From (\ref{eq8.4}) we get
     $$
     a_{1}=a_{2}\frac{\theta s_{1}(i\lambda l)}{i\lambda (1-\theta s_{0}(i\lambda l))}.
     $$
   Therefore, from (\ref{eq8.3}) we obtain
     \begin{equation}\label{eq8.5}
     y(\lambda,x)=\frac{a_{2}\left[\theta s_{1}(i\lambda x)s_{1}(i\lambda l)+s_{2}(i\lambda x)(1-\theta s_{0}(i\lambda l))\right]}{(i\lambda)^{2}(1-\theta s_{0}(i\lambda l))}
     \end{equation}
   The third boundary condition $y(l)=0$ takes the form
     \begin{equation}\label{eq8.6}
     \frac{\theta s_{1}(i\lambda l)^{2}+s_{2}(i\lambda l)(1-\theta s_{0}(i\lambda l))}{1-\theta s_{0}(i\lambda l)}=0
     \end{equation}

   \begin{lemma}\label{lem8.1}
   If\; $1-\theta s_{0}(i\lambda l)=0$\; $(|\theta|=1)$, then the boundary condition \emph{(\ref{eq8.6})} is not satisfied.
   \end{lemma}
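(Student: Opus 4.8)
The plan is to show that, under the hypothesis $1-\theta s_{0}(i\lambda l)=0$, the numerator of the left-hand side of (\ref{eq8.6}) is nonzero while its denominator vanishes, so that (\ref{eq8.6}) cannot hold. First I would substitute $1-\theta s_{0}(i\lambda l)=0$ into the numerator $\theta s_{1}(i\lambda l)^{2}+s_{2}(i\lambda l)(1-\theta s_{0}(i\lambda l))$: the second summand disappears and the numerator collapses to $\theta s_{1}(i\lambda l)^{2}$. Since $|\theta|=1$, the whole statement reduces to the single claim that $s_{1}(i\lambda l)\neq 0$; granting this, the left-hand side of (\ref{eq8.6}) is a nonzero number divided by $0$ and is therefore not equal to $0$.

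To pin down the relevant $\lambda$ I would use that $L_{\theta}$ is self-adjoint (Proposition \ref{pro7.3}), so every eigenvalue $\lambda^{3}$ is real, while $\lambda\neq 0$ by Remark \ref{rem8.1} (recall $\theta\neq -1$). The equation $L_{\theta}y=\lambda^{3}y$, the quantity $1-\theta s_{0}(i\lambda l)$ (here $s_{0}(\zeta_{\nu}z)=s_{0}(z)$, as $s_{0}$ is $0_{3}$-even), and the vanishing of $s_{1}(i\lambda l)$ (which only acquires the unimodular factor $\zeta_{\nu}$ under $\lambda\mapsto\zeta_{\nu}\lambda$, since $s_{1}(\zeta_{\nu}z)=\zeta_{\nu}s_{1}(z)$) all depend on $\lambda$ solely through $\lambda^{3}$. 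Hence I may replace $\lambda$ by the real cube root of the eigenvalue and assume $\lambda\in\mathbb{R}$, $\lambda\neq 0$, so that $w=i\lambda l$ lies on the imaginary axis. As the imaginary axis meets none of the bisectrices $l_{\nu}^{-}$, Proposition \ref{pro5.1} already yields $s_{1}(i\lambda l)\neq 0$.

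For a self-contained finish I would instead invoke Lemma \ref{lem5.1} with $k=1$ and $x=0$: with $y=\lambda l$ its first equation reads $\cosh\frac{y\sqrt 3}{2}\cos\!\left(-\frac{2\pi}{3}\right)=-\frac{1}{2}\cos\frac{3y}{2}$, and since $\cos(-2\pi/3)=-\frac{1}{2}$ this simplifies to $\cosh\frac{y\sqrt 3}{2}=\cos\frac{3y}{2}$. Because $\cosh\frac{y\sqrt 3}{2}\geq 1\geq\cos\frac{3y}{2}$ with equality only at $y=0$, this forces $\lambda l=0$, contradicting $\lambda\neq 0$; hence $s_{1}(i\lambda l)\neq 0$ and (\ref{eq8.6}) is not satisfied. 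I expect the only genuinely delicate point to be the reduction to real $\lambda$: one must observe that the entire configuration is invariant under $\lambda\mapsto\zeta_{\nu}\lambda$ (equivalently, use reality of the spectrum), because for a complex $\lambda$ whose direction hits a bisectrix the function $s_{1}$ can vanish and the assertion would break down.
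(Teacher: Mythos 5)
Your opening reduction is exactly the paper's: when $1-\theta s_{0}(i\lambda l)=0$, the numerator of (\ref{eq8.6}) collapses to $\theta s_{1}(i\lambda l)^{2}$, so the lemma is equivalent to the claim $s_{1}(i\lambda l)\neq 0$. After that you genuinely diverge. The paper stays algebraic: assuming $s_{1}(i\lambda l)=0$, the main identity (\ref{eq4.5}) gives $s_{2}(i\lambda l)=(1-\overline{\theta}^{3})^{1/3}$, the Euler identity (\ref{eq4.4}) gives $e^{i\lambda l}=\overline{\theta}+(1-\overline{\theta}^{3})^{1/3}$, and combining this with its complex conjugate yields the relation (\ref{eq8.7}), which is then shown to have no solution with $|\theta|=1$. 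Note that the conjugation step there ($\overline{e^{i\lambda l}}=e^{-i\lambda l}$) tacitly treats $\lambda$ as real; the paper confronts non-real $\lambda$ only later, at the end of the proof of Lemma \ref{lem8.2}, by exactly the self-adjointness argument you propose. So your route --- restrict to real $\lambda$ via reality of the spectrum and the $\zeta_{\nu}$-equivariance of $s_{0},s_{1}$, then settle the real case by the zero localization of Section \ref{s:5} --- is a legitimately different proof, and it is more forthright about the complex-$\lambda$ issue; your finish via Lemma \ref{lem5.1} with $x=0$, $k=1$ (the first equation becomes $\cosh(y\sqrt{3}/2)=\cos(3y/2)$, forcing $y=0$) is correct and self-contained. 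What the paper's method buys is independence from Sections \ref{s:5} and \ref{s:7}: it uses only the identities of Section \ref{s:4}.

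One step must be added before your argument is sound: self-adjointness has nothing to act on until $\lambda^{3}$ is known to be an eigenvalue, and the hypothesis $1-\theta s_{0}(i\lambda l)=0$ alone does not produce an eigenfunction. The bridge is: if, toward a contradiction, also $s_{1}(i\lambda l)=0$, then $y(x)=s_{1}(i\lambda x)$ is a nontrivial ($\lambda\neq0$) solution of $(-iD)^{3}y=\lambda^{3}y$ satisfying all three conditions (\ref{eq8.1}), since $y(0)=s_{1}(0)=0$, $y'(0)=i\lambda s_{0}(0)=i\lambda=\theta\cdot i\lambda\overline{\theta}=\theta y'(l)$, and $y(l)=s_{1}(i\lambda l)=0$; hence $\lambda^{3}$ is an eigenvalue of $L_{\theta}$ and is therefore real, and it is nonzero because $\lambda=0$ is not an eigenvalue for $\theta\neq-1$ (the paper's remark after (\ref{eq8.4})). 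This is precisely the ``nontrivial solution of (\ref{eq8.9}) gives an eigenfunction'' observation the paper itself makes inside Lemma \ref{lem8.2}. Only with it can you pass to the real cube root. (Alternatively, you can avoid operator theory entirely: by Proposition \ref{pro5.1} a zero of $s_{1}$ lies on some $l_{\nu}^{-}$, say $i\lambda l=-t\zeta_{\nu}$ with $t>0$; then $s_{1}(-t)=0$ and $\overline{\theta}=s_{0}(i\lambda l)=s_{0}(-t)\in\mathbb{R}$, so $\theta=\pm1$; $\theta=-1$ is excluded by the standing assumption, while $\theta=1$ forces $s_{2}(-t)=0$ by (\ref{eq4.5}) and then $e^{-t}=1$ by (\ref{eq4.4}), i.e.\ $t=0$, a contradiction.) With that one addition, your proof is complete.
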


   \begin{proof}
   If  (\ref{eq8.6}) is true for $s_{0}(i\lambda l)=\overline{\theta}$, then $s_{1}(i\lambda l)=0$. From the main identity (\ref{eq4.5}) we get
     $$
     \overline{\theta}^{3}+s_{2}(i\lambda l)^{3}=1\quad\Rightarrow\quad
     s_{2}(i\lambda l)=(1-\overline{\theta}^{3})^{1/3}.
     $$
   According to the Euler formula (\ref{eq4.4}) we get
     $$
     e^{i\lambda l}=\overline{\theta}+(1-\overline{\theta}^{3})^{1/3}.
     $$
   Since $e^{i\lambda l}e^{-i\lambda l}=1$, then
     $$
     \frac{\theta}{(1-\theta^{3})^{1/3}}+\frac{\overline{\theta}}{(1-\overline{\theta}^{3})^{1/3}}=-1.
     $$
   Therefore, there exists $b\in\mathbb{R}$ such that
     \begin{equation}\label{eq8.7}
     \frac{\theta}{(1-\theta^{3})^{1/3}}=-\frac{1}{2}+i\frac{b}{2}.
     \end{equation}
   Show that (\ref{eq8.7}) is impossible for $|\theta|=1$. Introducing the notation
      $$
      \left(-\frac{1}{2}+i\frac{b}{2}\right)^{3}=\frac{-1+3b^{2}}{8}+i\frac{3b-b^{3}}{8}=z=x+iy,
      $$
   from (\ref{eq8.7}) we obtain
      $$
      \theta^{3}=z(1-\theta^{3})\quad\Rightarrow\quad \theta^{-3}=
         \left(1+\frac{x}{x^{2}+y^{2}}\right)-i\frac{y}{x^{2}+y^{2}}.
      $$
   Condition $|\theta^{-3}|=1$ is satisfied only when $x=-\frac{1}{2}$. Therefore, for $b$ we obtain the equation
      $$
      \frac{-1+3b^{2}}{8}=-\frac{1}{2}\quad\Leftrightarrow\quad b^{2}+1=0,
      $$
   which has no real solutions. Contradiction.
   \end{proof}

   We call the function
      \begin{equation}\label{eq8.8}
      \Delta_{\theta}(\lambda)=\frac{1}{\lambda^{2}}
         \left[\theta s_{1}(i\lambda l)^{2}+s_{2}(i\lambda l)(1-\theta s_{0}(i\lambda l))\right]
      \end{equation}
   the characteristic function of the operator $L_{\theta}$.

   \begin{remark}\label{rem8.1}
   From the second and third boundary conditions (\ref{eq8.1}) for the function $y(\lambda,x)$ (\ref{eq8.3}) we obtain a system of equations
     \begin{equation}\label{eq8.9}
       \begin{cases}
       a_{1}(1-\theta s_{0}(i\lambda l))-a_{2}\theta s_{1}(i\lambda l)/i\lambda=0\\
       a_{1}s_{1}(i\lambda l)/i\lambda+a_{2}s_{2}(i\lambda l)/(i\lambda)^{2}=0
       \end{cases}
     \end{equation}
   for $a_{1}$ and $a_{2}$. The existence of nontrivial solutions for this system is equivalent to the vanishing of its determinant, that is, the characteristic function (\ref{eq8.8}).
   \end{remark}

   Taking into account (\ref{eq4.11}), we rewrite the characteristic function in the form
     \begin{equation}\label{eq8.10}
     \Delta_{\theta}(\lambda)=\frac{1}{\lambda^{2}}\left[s_{2}(i\lambda l)+\theta s_{2}(-i\lambda l)\right].
     \end{equation}

   \begin{lemma}\label{lem8.2}
   The characteristic function \emph{(\ref{eq8.10})} is an entire function of exponential type that satisfies the conditions
    \begin{equation}\label{eq8.11}
     \begin{array}{ccc}
     (\emph{i}) & &\overline{\Delta_{\theta}(\lambda})=\Delta_{\overline{\theta}}(-\overline{\lambda})=
          \overline{\theta}\Delta_{\theta}(\overline{\lambda}),\qquad\\
     (\emph{ii}) & & \Delta_{\theta}(-\lambda)=\theta\Delta_{\overline{\theta}}(\lambda),\qquad\qquad\qquad\\
     (\emph{iii}) & & \Delta_{\theta}(\lambda\zeta_{j})=\Delta_{\theta}(\lambda),\qquad
     j=0,1,2.
     \end{array}
    \end{equation}
   If $\theta\neq -1$, then zeros of the function $\Delta_{\theta}(i\lambda l)$ are simple and have the form
     \begin{equation}\label{eq8.12}
     \mathcal{Z}_{0}=\{\;\zeta_{1}^{s}\lambda_{n}(\varphi),\quad -\zeta_{1}^{s}\lambda_{n}(-\varphi),\quad s=0,1,2,\;\;n\in\mathbb{N}\},
     \end{equation}
   where $\varphi=\arg\theta$ $(|\varphi|<\pi)$, $\{\lambda_{n}(\varphi)\}_{0}^{\infty}$
   $\left(\{-\lambda_{n}(-\varphi)\}_{0}^{\infty}\right)$ are the positive \emph{(}negative\emph{)} zeros of $\Delta_{\theta}(\lambda)$, numbered in ascending \emph{(}descending\emph{)} order.

   Zeros $\lambda_{n}(\varphi)$ and $-\lambda_{n}(-\varphi)$ belong to the interval
   $$
   \left(\frac{\pi}{2}(2n-1)-\varphi,\frac{\pi}{2}(2n+1)-\varphi\right),
   \left(-\frac{\pi}{2}(2n+1)+\varphi,-\frac{\pi}{2}(2n-1)+\varphi\right).
   $$
   The following asymptotics exist,
    \begin{multline}\label{eq8.13}
    \quad\;\lambda_{n}(\varphi)=\frac{2\pi n}{l}-\varphi-\frac{\pi}{3l}+o\left(\frac{1}{n}\right)\quad(n\rightarrow\infty)\\
     -\lambda_{n}(-\varphi)=-\frac{2\pi n}{l}+\varphi+\frac{\pi}{3l}+o\left(\frac{1}{n}\right)\quad(n\rightarrow\infty).\qquad\qquad
     \quad
    \end{multline}
   \end{lemma}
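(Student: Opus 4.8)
The plan is to dispatch the three analytic assertions first, then to localize the zeros onto the six rays through the origin via self-adjointness, and finally to analyse the resulting real equation on one ray. For the structural part, note that by Proposition~\ref{pro3.2} each $s_k$ is entire of exponential type, so $s_2(\pm i\lambda l)$ are entire of exponential type in $\lambda$; from the Taylor series of Proposition~\ref{pro3.2} one has $s_2(\pm i\lambda l)=-\tfrac12 l^{2}\lambda^{2}+O(\lambda^{5})$, whence the bracket in \eqref{eq8.10} vanishes to order $2$ at $\lambda=0$ and $\Delta_\theta$ is entire of exponential type with $\Delta_\theta(0)=-\tfrac12 l^{2}(1+\theta)$, nonzero precisely when $\theta\neq-1$. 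The three identities \eqref{eq8.11} are then routine: (i) uses the reality relation $\overline{s_2(z)}=s_2(\bar z)$ of Proposition~\ref{pro3.2} together with $\theta\bar\theta=1$; (ii) follows from \eqref{eq8.10} under $\lambda\mapsto-\lambda$ and $\theta\bar\theta=1$; and (iii) uses that $s_2$ is $2_3$-even, i.e. $s_2(\pm i\lambda\zeta_j l)=\zeta_j^{2}s_2(\pm i\lambda l)$, the factor $\zeta_j^{2}$ cancelling against $(\lambda\zeta_j)^{-2}$.

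To localize the zeros I would exploit self-adjointness. Identity (iii) shows the zero set is invariant under $\lambda\mapsto\zeta_1\lambda$, hence a union of $\zeta_1$-orbits, so it suffices to treat one ray. The key point is that $\Delta_\theta(\lambda_0)=0$ with $\lambda_0\neq0$ is, through the system \eqref{eq8.9} and Lemma~\ref{lem8.1} (which removes the apparent exceptional set $1-\theta s_0(i\lambda_0 l)=0$), equivalent to the existence of a nonzero solution \eqref{eq8.3} of \eqref{eq8.2} meeting all of \eqref{eq8.1}; that is, $\lambda_0^{3}$ is an eigenvalue of the self-adjoint operator $L_\theta$ of Proposition~\ref{pro7.3}. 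Therefore $\lambda_0^{3}\in\mathbb R$, which forces $\arg\lambda_0\in\{k\pi/3\}_{k=0}^{5}$: every zero lies on one of the six rays $\zeta_1^{s}\mathbb R_{\pm}$. Identity (ii) identifies the negative real zeros of $\Delta_\theta$ with the negatives of the positive real zeros of $\Delta_{\bar\theta}$, producing the two families $\zeta_1^{s}\lambda_n(\varphi)$ and $-\zeta_1^{s}\lambda_n(-\varphi)$ of \eqref{eq8.12}; everything thus reduces to the positive real axis.

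On $\mathbb R$, substituting the representation $3s_2(z)=e^{-z/2}\big(e^{3z/2}+2\cos(\tfrac{z\sqrt3}{2}-\tfrac{4\pi}{3})\big)$ from the proof of Lemma~\ref{lem5.1} into \eqref{eq8.10} and simplifying gives the exact, manifestly real expression
\begin{equation}\nonumber
3\lambda^{2}e^{-i\varphi/2}\Delta_\theta(\lambda)=2\cos\big(\lambda l-\tfrac{\varphi}{2}\big)-2\cosh\beta\,\cos\big(\tfrac{\lambda l+\varphi}{2}\big)-2\sqrt3\,\sinh\beta\,\sin\big(\tfrac{\lambda l+\varphi}{2}\big),\quad\beta=\tfrac{\sqrt3}{2}\lambda l,
\end{equation}
in accordance with identity (i). As $\lambda\to+\infty$ the two hyperbolic terms dominate and combine into $-2e^{\beta}\cos\big(\tfrac{\lambda l+\varphi}{2}-\tfrac{\pi}{3}\big)+O(1)$, so the real zeros cluster at the zeros $\lambda l=2\pi n-\varphi-\tfrac{\pi}{3}$ of this cosine, which yields the interval localization and the asymptotics \eqref{eq8.13}. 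Simplicity follows because the dominant term has nonvanishing derivative at its zeros (there $\cos=0$ and $|\sin|=1$), so $\Delta_\theta'(\lambda_n)\neq0$ for large $n$; a Rouch\'e comparison of $\lambda^{2}\Delta_\theta$ with its dominant term on contours separating consecutive cosine zeros confirms that each such interval contains exactly one zero and that none lies off the real axis, while the finitely many low-index zeros are read off from the sign changes of the real function above.

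I expect the completeness claim---that the zeros found exhaust the zero set with the correct multiplicity---to be the crux, rather than the asymptotics. Self-adjointness removes genuinely complex zeros at a stroke, but making the Rouch\'e count uniform in $n$ (so that no zero on the ray is missed or doubled) and checking simplicity and interval membership for the smallest indices requires the explicit real formula above and some care with the constant phase $\tfrac{2\pi}{3}$ and with the bounded remainder, which is what produces the $o(1/n)$ error in \eqref{eq8.13}.
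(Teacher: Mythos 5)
Your skeleton coincides with the paper's: identities (i)--(iii) come from Proposition \ref{pro3.2}(v) and the $2_{3}$-evenness of $s_{2}$, the value $\Delta_{\theta}(0)=-l^{2}(1+\theta)/2$ is read off the Taylor series, self-adjointness of $L_{\theta}$ via the system \eqref{eq8.9} forces any zero $\lambda_{0}$ to satisfy $\lambda_{0}^{3}\in\mathbb{R}$ and hence to lie on the six rays (the paper places this step at the \emph{end} of its proof, you at the beginning; the ordering is immaterial), and (ii), (iii) reduce everything to $\mathbb{R}_{+}$. You even arrive at the same real trigonometric expression for $3\lambda^{2}e^{-i\varphi/2}\Delta_{\theta}(\lambda)$ that the paper derives. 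The genuinely different step is how you extract the zero count on $\mathbb{R}_{+}$ from that expression, and that is where your proof has a real gap.

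The lemma asserts that \emph{all} zeros are simple and locates \emph{every} one of them: exactly one in each stated interval for every $n\in\mathbb{N}$, plus the low-lying zero $\lambda_{0}(\varphi)$. Your dominant-term/Rouch\'{e} argument is intrinsically asymptotic: the remainder is $O(1)$ against a main term of size $e^{\sqrt{3}\lambda l/2}$, so it gives one simple zero per interval only for $n$ beyond an unspecified threshold, and your fallback for the finitely many remaining intervals --- ``sign changes of the real function'' --- proves existence of a zero of odd multiplicity but neither uniqueness within the interval nor simplicity. That is precisely the part you yourself flag as the crux, and it is left unproved; in particular the zero $\lambda_{0}(\varphi)$, which for $\varphi\in(-\pi,0)$ the paper extracts from the vanishing of $\Phi(x)=\sin(x-\varphi)+\sqrt{3}\sinh(x\sqrt{3})$, is invisible to your scheme. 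The paper closes all of this with a uniform, non-asymptotic device: for $\lambda>\lambda_{0}(\varphi)$, equation \eqref{eq8.14} is \emph{equivalent} to $\tan\bigl(\tfrac{\lambda l+\varphi}{2}\bigr)=f(\lambda,\varphi)$ with $f$ as in \eqref{eq8.16}, and a derivative estimate shows $f$ is strictly decreasing with $f\rightarrow-1/\sqrt{3}$; since the tangent increases strictly on each branch, every interval carries exactly one transversal (hence simple) root --- no threshold, no contour, and the asymptotics \eqref{eq8.13} follow at once from the limit of $f$. Note also that your use of Rouch\'{e} to conclude that ``none lies off the real axis'' is both redundant (self-adjointness already gave this) and shaky, since the dominant-term comparison you write down is valid only near the positive real ray, not on complex contours. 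To complete your proof, replace the large-$n$ argument by the paper's monotonicity reduction, or by any other argument that treats all $n$ at once.
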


   \begin{proof}
   Identities (\ref{eq8.11}) follow from Definition \ref{def3.1}, Proposition \ref{pro3.2} (v).

   Using the Taylor series (\ref{eq3.7}) it is easy to show that
      $$
      \Delta_{\theta}(\lambda)|_{\lambda=0}=-l^{2}(1+\theta)/2\neq 0\quad\text{for}\quad \theta\neq -1.
      $$
   First, find the zeros lying on the lines $l_{k}=\{z\in\mathbb{C}\mid z=x\zeta_{k},\;x\in\mathbb{R}\}$, $k=0,1,2$. Later we will show that there are no other zeros.

   It follows from (\ref{eq8.11}) (ii), (iii) that it suffices to find the zeros $\lambda_{n}(\varphi)$ of the function $\Delta_{\theta}(\lambda)$ that lie on $\mathbb{R_{+}}=\{x\in\mathbb{R}\mid x>0\}$.

   Using $\theta=e^{i\varphi}$, we represent the characteristic function in the form
     $$
     \Delta_{\theta}(\lambda)=\frac{2e^{i\varphi/2}}{3\lambda^{2}}\sum_{k=0}^{2}\zeta_{k}
        \cos\left(\lambda\zeta_{k}l-\frac{\varphi}{2}\right).
     $$

   Since $\theta\neq -1$, then the equation $\Delta_{\theta}(\lambda)=0$ is equivalent to the equation
     $$
     \sum_{k=0}^{2}\zeta_{k}
        \cos\left(\lambda\zeta_{k}l-\frac{\varphi}{2}\right)=0.
     $$
   From this we get
     \begin{multline}\nonumber
     \cos\left(\lambda l-\frac{\varphi}{2}\right)-
     \cos\left(\frac{\lambda l+\varphi}{2}\right)\cosh\frac{\lambda l\sqrt 3}{2}-\\
     \sqrt 3\sin\left(\frac{\lambda l+\varphi}{2}\right)\sinh\frac{\lambda l\sqrt 3}{2}=0,\qquad\qquad
     \end{multline}
   or
     \begin{multline}\label{eq8.14}
     \cos\left(\frac{\lambda l+\varphi}{2}\right)\left(\cos\left(\frac{\lambda l}{2}-\varphi\right)-\cosh\frac{\lambda l\sqrt 3}{2}\right)=\\
        \sin\left(\frac{\lambda l+\varphi}{2}\right)\left(\sin\left(\frac{\lambda l}{2}-\varphi\right)+\sqrt 3\sinh\frac{\lambda l\sqrt 3}{2}\right).
     \end{multline}
   Note that if $\varphi\in(0,\pi)$, then
     $$
     \Phi(x):=\sin(x-\varphi)+\sqrt 3 \sinh x\sqrt 3>0\quad\text{for}\quad x>0.
     $$
   If $\varphi\in(-\pi,0)$, then $\Phi(x)$ has a simple zero $x_{0}$ lying in $(0,\pi)$. In this case $\Phi(x)<0$, for $x\in(0,x_{0})$, and  $\Phi(x)>0$, for $x\in(x_{0},\infty)$.

   Let $x_{0}$ be zero of $\Phi(x)$ $(\varphi\in(-\pi,0))$. Then (\ref{eq8.14}) implies $\cos(x_{0}+\varphi/2)=0$. Therefore $\lambda_{0}(\varphi)=(\pi-\varphi)/2$ is the zero of the characteristic function $\Delta_{\theta}(\lambda)$. For $\lambda>\lambda_{0}(\varphi)$, the inequality
   $\sin\left(\frac{\lambda l}{2}-\varphi\right)+\sqrt 3 \sinh \frac{\lambda l\sqrt 3}{2}>0$ holds, and (\ref{eq8.14}) is equivalent to the equation
      \begin{equation}\label{eq8.15}
      \tan\left(\frac{\lambda l+\varphi}{2}\right)=f(\lambda,\varphi),
      \end{equation}
   where
      \begin{equation}\label{eq8.16}
      f(\lambda,\varphi)=\frac{\cos\left(\frac{\lambda l}{2}-\varphi\right)-\cosh\left(\frac{\lambda l\sqrt 3}{2}\right)}{\sin\left(\frac{\lambda l}{2}-\varphi\right)+\sqrt 3\sinh\left(\frac{\lambda l\sqrt 3}{2}\right)}\quad (\lambda>\lambda_{0}(\varphi)).
      \end{equation}
   The derivative with respect to $\lambda$ of this function is
     $$
     f'(\lambda,\varphi)=l
       \frac{1-\cos\left(\frac{\lambda l}{2}-\varphi\right)\cosh\frac{\lambda l\sqrt 3}{2}-\sqrt 3\sin\left(\frac{\lambda l}{2}-\varphi\right)\sinh\frac{\lambda l\sqrt 3}{2}}
       {\left(\sin\left(\frac{\lambda l}{2}-\varphi\right)+\sqrt 3\sinh\left(\frac{\lambda l\sqrt 3}{2}\right)\right)^{2}}
     $$
   Since the inequalities
     $$
     \cos\left(\frac{\lambda l}{2}-\varphi\right)<\cosh\frac{\lambda l\sqrt 3}{2},\quad
     \sin\left(\frac{\lambda l}{2}-\varphi\right)<\sinh\frac{\lambda l\sqrt 3}{2}
     $$
   hold for $\lambda>\lambda_{0}(\varphi)$, then
     $$
     f'(\lambda,\varphi)<l
       \frac{1-\cos^{2}\left(\frac{\lambda l}{2}-\varphi\right)-\sqrt 3\sin^{2}\left(\frac{\lambda l}{2}-\varphi\right)}
       {\left(\sin\left(\frac{\lambda l}{2}-\varphi\right)+\sqrt 3\sinh\left(\frac{\lambda l\sqrt 3}{2}\right)\right)^{2}}<0.
     $$
   Therefore, $f(\lambda,\varphi)$ monotonically decreases on the semi-axis $(\lambda_{0}(\varphi),\infty)$ and $f(\lambda,\varphi)\rightarrow -\frac{1}{\sqrt 3}$ as $\lambda\rightarrow\infty$. Then the equation (\ref{eq8.15}) for $\lambda>\lambda_{0}(\varphi)$ on each of the intervals
      $$
      \left(\frac{\pi}{l}(2n-1)-\varphi,\,\frac{\pi}{l}(2n+1)-\varphi\right)\quad (n\in\mathbb{N})
      $$
   has a single simple root
      $$
      \lambda_{n}(\varphi)=\frac{2\pi n}{l}-\varphi-\varepsilon_{n}\quad\left(0<\varepsilon_{n}<\frac{\pi}{l}\right).
      $$
   Asymptotics (\ref{eq8.13}) follows from (\ref{eq8.15}) and the relation
      $$
      f(\lambda,\varphi)\rightarrow -\frac{1}{\sqrt 3}\quad (\lambda\rightarrow\infty).
      $$

   Finally, note that $\Delta_{\theta}(\lambda)$ has no zeros outside the lines $l_{s}=\{z\in\mathbb{C}\mid z=x\zeta_{1}^{s}, x\in\mathbb{R}\}$ $(s=0,1,2)$. Indeed, if $w\not\in\bigcup l_{s}$ is zero, then the equation (\ref{eq8.9}) has a non-zero solution. Then there is an eigenfunction with complex eigenvalue $w^{3}\not\in \mathbb{R}$, which is impossible, since the operator $L_{\theta}$ is self-adjoint.
   \end{proof}

   The eigenfunctions of the operator $L_{\theta}$ have the form
     \begin{equation}\label{eq8.17}
     u(\mu_{n},x)=\frac{\theta s_{1}(i\mu_{n} x)s_{1}(i\mu_{n} l)+s_{2}(i\mu_{n} x)(1-\theta s_{0}(i\mu_{n} l))}{a_{n}(\mu_{n})},
     \end{equation}
   where $\mu_{n}\in\mathcal{Z}_{0}$ (see (\ref{eq8.12})) and the numbers $a_{n}(\mu_{n})$ are chosen from the normalization condition $\|u(\mu_{n},x)\|_{L^{2}(0,l)=1}$.
    \begin{corollary}\label{cor8.1}
    On the domain $D\subset L^{2}(0,l)$ of the operator $L_{\theta}$, the functions \emph{(\ref{eq8.17})} form an orthonormal basis.
    \end{corollary}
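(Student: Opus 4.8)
The plan is to establish the two defining properties of an orthonormal basis separately: orthonormality of the system $\{u(\mu_{n},x)\}$, and its completeness in $L^{2}(0,l)$. The orthonormality is essentially a consequence of self-adjointness, while the completeness is the substantive part and will be obtained from compactness of the resolvent of $L_{\theta}$ together with the spectral theorem.

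First I would treat orthonormality. Each $u(\mu_{n},x)$ is normalized by the very choice of the constant $a_{n}(\mu_{n})$, so it remains to verify orthogonality. By Proposition \ref{pro7.3} the operator $L_{\theta}$ is self-adjoint, hence eigenfunctions belonging to distinct eigenvalues are automatically orthogonal. The eigenvalue attached to a zero $\mu\in\mathcal{Z}_{0}$ is $\mu^{3}$; by Lemma \ref{lem8.2} the real zeros are $\lambda_{n}(\varphi)$ and $-\lambda_{n}(-\varphi)$, so the corresponding eigenvalues $\lambda_{n}(\varphi)^{3}>0$ and $-\lambda_{n}(-\varphi)^{3}<0$ are pairwise distinct (using $\theta\neq-1$, cf.\ Remark \ref{rem8.1}, so that $0$ is not an eigenvalue). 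The apparent triple redundancy $\mu$, $\zeta_{1}\mu$, $\zeta_{1}^{2}\mu$ in $\mathcal{Z}_{0}$ yields the single eigenvalue $\mu^{3}$, and using $s_{k}(\zeta_{1}z)=\zeta_{1}^{k}s_{k}(z)$ one checks that the numerator of (\ref{eq8.17}) is merely multiplied by $\zeta_{1}^{2}$ when $\mu$ is replaced by $\zeta_{1}\mu$; thus these give the same normalized eigenfunction and no genuine repetition occurs. Finally each eigenvalue is simple: the solution space of (\ref{eq8.2}) is three-dimensional, and once $\Delta_{\theta}(\lambda)=0$ the boundary conditions (\ref{eq8.1}) leave exactly the one-parameter family (\ref{eq8.5}), so the eigenspace is one-dimensional, consistently with the simplicity of the zeros asserted in Lemma \ref{lem8.2}.

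For completeness I would argue abstractly. Since $L_{\theta}$ is a regular self-adjoint differential operator of order three on the finite interval $(0,l)$, for any $\Lambda$ in its resolvent set the resolvent $(L_{\theta}-\Lambda)^{-1}$ is an integral operator whose kernel is the Green's function built from the fundamental system $\{s_{k}\}$ and the boundary conditions (\ref{eq8.1}); this kernel is bounded on $[0,l]\times[0,l]$, so the resolvent is Hilbert--Schmidt and in particular compact. By the spectral theorem for self-adjoint operators with compact resolvent, $L^{2}(0,l)$ then admits an orthonormal basis of eigenvectors of $L_{\theta}$. It remains only to identify this basis with (\ref{eq8.17}): any eigenvalue forces the problem (\ref{eq8.2}), (\ref{eq8.1}) to have a nontrivial solution, which by the derivation leading to (\ref{eq8.6}) is possible precisely when $\Delta_{\theta}(\lambda)=0$, and the corresponding eigenfunction is then, up to a scalar, the function (\ref{eq8.17}); moreover Lemma \ref{lem8.2} guarantees that $\Delta_{\theta}$ has no zeros off the lines $l_{s}$, so no complex eigenvalue is overlooked. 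Hence the complete eigenbasis coincides with $\{u(\mu_{n},x)\}$.

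The main obstacle is the completeness step, i.e.\ proving that the eigenfunctions exhaust $L^{2}(0,l)$. The abstract route above reduces it to compactness of the resolvent, which hinges on exhibiting the Green's function and verifying the boundedness of its kernel. One may alternatively derive completeness directly from the explicit resolvent kernel of $L_{\theta}$ by the classical contour-integration expansion theorem, estimating the resolvent on expanding contours with the help of the eigenvalue asymptotics (\ref{eq8.13}); in either form, controlling the kernel and ruling out spectral gaps is the delicate part, whereas the orthonormality is comparatively routine.
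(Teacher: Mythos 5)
Your proposal is correct, but it cannot be compared line-by-line with the paper's argument for a simple reason: the paper states Corollary \ref{cor8.1} with no proof at all, treating it as an immediate consequence of the preceding construction. What you have written is essentially the missing proof, and it is the standard one: orthogonality from self-adjointness (Proposition \ref{pro7.3}) plus your correct observation that the triple $\mu,\zeta_{1}\mu,\zeta_{1}^{2}\mu$ of zeros produces a single eigenvalue $\mu^{3}$ and, via $s_{k}(\zeta_{1}z)=\zeta_{1}^{k}s_{k}(z)$, a single normalized eigenfunction; completeness from the spectral theorem for a self-adjoint operator with compact resolvent; and the identification of all eigenfunctions with (\ref{eq8.17}) through the determinant condition $\Delta_{\theta}(\lambda)=0$ of the system (\ref{eq8.9}), using Lemma \ref{lem8.2} to exclude zeros off the lines $l_{s}$. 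The paper's own (implicit) route is the one it signals after Theorem \ref{the9.1}: having computed the resolvent kernel (\ref{eq9.5}) explicitly, it obtains the spectral projections as residues of the resolvent at the zeros of $\Delta_{\theta}$, i.e.\ the classical contour-integration expansion — the alternative you mention in your last paragraph. Your abstract compact-resolvent argument is cleaner and avoids estimates on expanding contours, whereas the paper's explicit route yields the eigenfunction expansion constructively and ties it to formula (\ref{eq9.5}). Two small points you should tighten: (a) the boundary conditions (\ref{eq8.1}) are a Dirichlet-type limiting case of (\ref{eq7.30}) ($y(0)=y(l)=0$ rather than $y''(0)=ib_{0}y(0)$, $y''(l)=ib_{l}y(l)$), so self-adjointness strictly requires repeating the Lagrange-identity computation for these conditions rather than citing Proposition \ref{pro7.3} verbatim — the paper glosses over this too; (b) your Hilbert--Schmidt claim is most economically anchored in the explicit kernel of Theorem \ref{the9.1}, which is a finite combination of values of the entire function $s_{2}$ and hence bounded on $[0,l]^{2}$ for any fixed $\lambda$ with $\lambda^{4}\Delta_{\theta}(\lambda)\neq 0$, rather than in an appeal to general Green's-function theory.
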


    \begin{remark}\label{rem8.3}
    The boundary conditions
       \begin{equation}\label{eq8.18}
       y(0)=ih_{0}y''(0),\; y'(0)=\theta y'(l),\; y(l)=ih_{l}y''(l),\;(h_{j}\in\mathbb{R}, \theta\in \mathbb{T})
       \end{equation}
    for the operation $(-iD)^{3}$ also generate a self-adjoint operator $L_{\theta,h_{0},h_{l}}$. Its eigenfunctions also form an orthonormal basis on the corresponding domain of the operator  $L_{\theta,h_{0},h_{l}}$.
    \end{remark}

    \section{\large{Operator resolvent}}\label{s:9}

    Calculate the resolvent $R_{L_{\theta}}(\lambda^{3})=(L_{\theta}-\lambda^{3}I)^{-1}$.

    Let $y=R_{L_{\theta}}(\lambda^{3})f$, i. e., $L_{\theta}y=\lambda^{3}y+f$. By Lemma \ref{lem6.1}, $y(\lambda,x)$ has the form
      \begin{equation}\label{eq9.1}
      y(\lambda,x)=y_{1}\frac{s_{1}(i\lambda x)}{i\lambda}+
       y_{2}\frac{s_{2}(i\lambda x)}{(i\lambda)^{2}}+
          i\int_{0}^{x}\frac{s_{2}(i\lambda(x-t))}{\lambda^{2}}f(t)dt
      \end{equation}
    and satisfies the boundary condition $y(\lambda,0)=0$. Since
      $$
      y'_{x}(\lambda,x)=y_{1}s_{0}(i\lambda x)+
       y_{2}\frac{s_{1}(i\lambda x)}{i\lambda}-
          \int_{0}^{x}\frac{s_{1}(i\lambda(x-t))}{\lambda}f(t)dt,
      $$
    then from the boundary conditions $y'(0)=\theta y'(l)$, $y(l)=0$ we obtain the system of equations
      $$
      \begin{cases}
      y_{1}(\theta s_{0}(i\lambda l)-1)+y_{2}\theta\frac{s_{1}(i\lambda l)}{i\lambda}=
          \theta\int_{0}^{l}\frac{s_{1}(i\lambda(l-t))}{\lambda}f(t)dt\\
          \\
      y_{1}\frac{s_{1}(i\lambda l)}{i\lambda}+y_{2}\frac{s_{2}(i\lambda l)}{(i\lambda)^{2}}=
             -i\int_{0}^{l}\frac{s_{2}(i\lambda(l-t))}{\lambda^{2}}f(t)dt.
      \end{cases}
      $$
    The determinant of this system coincides with the characteristic function $\Delta_{\theta}(\lambda)$ (see \ref{eq8.8}). Then
      $$
      y_{1}=\frac{\theta}{\lambda^{3}\Delta_{\theta}(\lambda)}\int_{0}^{l}
      \left[s_{2}(i\lambda(l-t))s_{1}(i\lambda l)-s_{1}(i\lambda(l-t))s_{2}(i\lambda l)\right]f(t)dt,
      $$
     \begin{multline}\nonumber
     y_{2}=\\
     \frac{i}{\lambda^{2}\Delta_{\theta}(\lambda)}\int_{0}^{l}
      \left[s_{2}(i\lambda(l-t))(1-\theta s_{0}(i\lambda l))+\theta s_{1}(i\lambda(l-t))s_{1}(i\lambda l)\right]f(t)dt.
     \end{multline}
    Then (\ref{eq9.1}) becomes
      \begin{multline}\label{eq9.2}
      y(\lambda,x)=
      \frac{i}{\lambda^{4}\Delta_{\theta}(\lambda)}\times\\
        \left\{
        \theta s_{1}(i\lambda x)\int_{0}^{l}\left[s_{1}(i\lambda(l-t))s_{2}(i\lambda l)-
        s_{2}(i\lambda(l-t))s_{1}(i\lambda l)\right]f(t)dt-\right.\\
      s_{2}(i\lambda l)\int_{0}^{l}\left[s_{2}(i\lambda(l-t))(1-\theta s_{0}(i\lambda l))+
      \theta s_{1}(i\lambda(l-t))s_{1}(i\lambda l)\right]f(t)dt+\\
      \left.\int_{0}^{x}s_{2}(i\lambda(x-t))\left[s_{2}(i\lambda l)+\theta s_{2}(-i\lambda l)\right]f(t)dt\right\}.
      \end{multline}
    To simplify this expression, we need the lemma
      \begin{lemma}\label{lem9.1}
      Let $s_{k}(z)$ $(k=0,1,2)$ be $3$-hyperbolic functions. Then
        \begin{multline}\label{eq9.3}
        s_{1}(x)s_{1}(l-t)s_{2}(l)- s_{1}(x)s_{2}(l-t)s_{1}(l)+\\
        s_{2}(x)s_{2}(l-t)s_{0}(l)-s_{2}(x)s_{1}(l-t)s_{1}(l)=\\
        s_{2}(-t)s_{2}(x-l)-s_{2}(x-t)s_{2}(-l)
        \end{multline}
      for any $x,l,t\in\mathbb{C}$.
      \end{lemma}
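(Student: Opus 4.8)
The plan is to reduce the threefold–product identity (\ref{eq9.3}) to a pair of twofold–product identities by collecting terms according to their dependence on $x$. First I would regroup the left-hand side of (\ref{eq9.3}) as
\begin{multline*}
\bigl[s_1(l-t)s_2(l)-s_2(l-t)s_1(l)\bigr]\,s_1(x)\\
+\bigl[s_2(l-t)s_0(l)-s_1(l-t)s_1(l)\bigr]\,s_2(x),
\end{multline*}
so that, notably, no $s_0(x)$ term occurs.

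Next I would process the right-hand side. Applying the addition formula (\ref{eq4.6}) to $s_2(x-l)=s_2(x+(-l))$ and to $s_2(x-t)=s_2(x+(-t))$ and multiplying out, the two resulting $s_0(x)$-terms are both equal to $s_0(x)s_2(-t)s_2(-l)$ up to sign and cancel, leaving
\begin{multline*}
\bigl[s_2(-t)s_1(-l)-s_1(-t)s_2(-l)\bigr]\,s_1(x)\\
+\bigl[s_2(-t)s_0(-l)-s_0(-t)s_2(-l)\bigr]\,s_2(x).
\end{multline*}
Since both sides are now of the form $(\cdots)\,s_1(x)+(\cdots)\,s_2(x)$ with coefficients depending only on $l$ and $t$, it suffices to prove the two coefficient identities
\begin{align*}
s_1(l-t)s_2(l)-s_2(l-t)s_1(l) &= s_2(-t)s_1(-l)-s_1(-t)s_2(-l), \tag{I}\\
s_2(l-t)s_0(l)-s_1(l-t)s_1(l) &= s_2(-t)s_0(-l)-s_0(-t)s_2(-l). \tag{II}
\end{align*}

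To establish (I) and (II) I would expand $s_1(l-t)$ and $s_2(l-t)$ by the addition formula (\ref{eq4.6}) with $z=l$, $w=-t$. In (I) the terms carrying $s_0(-t)$ cancel and the remaining coefficients of $s_1(-t)$ and $s_2(-t)$ are $s_0(l)s_2(l)-s_1(l)^2$ and $s_2(l)^2-s_0(l)s_1(l)$; by the opposite-argument relations (\ref{eq4.11}) these equal $-s_2(-l)$ and $s_1(-l)$, which reproduces the right-hand side of (I). Identity (II) is entirely analogous: the terms carrying $s_1(-t)$ cancel, and the coefficients $s_0(l)^2-s_1(l)s_2(l)$ and $s_0(l)s_2(l)-s_1(l)^2$ of $s_2(-t)$ and $s_0(-t)$ are, again by (\ref{eq4.11}), equal to $s_0(-l)$ and $-s_2(-l)$.

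The argument is purely computational, driven by the addition law (\ref{eq4.6}) and the opposite-argument formulas (\ref{eq4.11}); the only point that requires care is the repeated cancellation of the $s_0$-terms — the $s_0(x)$ term on the right-hand side, and the $s_0(\pm t)$ terms inside (I) and (II) — since it is precisely this vanishing that permits the matching of coefficients and hence the reduction to (\ref{eq4.11}).
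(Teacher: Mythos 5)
Your proof is correct and takes essentially the same route as the paper: both arguments rest on expanding via the addition formulas (\ref{eq4.6}) and converting the resulting coefficients in $l$ with the opposite-argument relations (\ref{eq4.11}), with the same cancellations of the $s_{0}$-terms. The only difference is bookkeeping --- you expand both sides and match the coefficients of $s_{1}(x)$ and $s_{2}(x)$ (your identities (I) and (II)), whereas the paper transforms the left-hand side directly into the right-hand side, reassembling with (\ref{eq4.6}) at the final step; the intermediate expressions coincide.
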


      \begin{proof}
      Using (\ref{eq4.6}), we transform the left side of (\ref{eq9.3}):
       \begin{multline}\nonumber
       A=s_{1}(x)s_{2}(l)\left[s_{0}(l)s_{1}(-t)+\underline{s_{1}(l)s_{0}(-t)}+
       s_{2}(l)s_{2}(-t)\right]-\\
         s_{1}(x)s_{1}(l)\left[\underline{s_{0}(l)s_{2}(-t)}+s_{1}(l)s_{1}(-t)+
         s_{2}(l)s_{0}(-t)\right]+\\
       s_{2}(x)s_{0}(l)\left[s_{0}(l)s_{2}(-t)+\underline{\underline{s_{1}(l)s_{1}(-t)}}
       +s_{2}(l)s_{0}(-t)\right]-\\
       s_{2}(x)s_{1}(l)\left[\underline{\underline{s_{0}(l)s_{1}(-t)}}+s_{1}(l)s_{0}(-t)
       +s_{2}(l)s_{2}(-t)\right]=
       \end{multline}
     \begin{multline}\nonumber
     s_{2}(-t)\left[s_{1}(x)\left(s_{2}(l)^{2}-s_{0}(l)s_{1}(l)\right)+
              s_{2}(x)\left(s_{0}(l)^{2}-s_{1}(l)s_{2}(l)\right)\right]+\\
        s_{1}(-t)s_{1}(x))\left(s_{0}(l)s_{2}(l)-s_{1}(l)^{2}\right)+
           s_{0}(-t)s_{2}(x))\left(s_{0}(l)s_{2}(l)-s_{1}(l)^{2}\right).
     \end{multline}
    Using (\ref{eq4.11}), we obtain
      \begin{multline}\nonumber
      A=s_{2}(-t)\left[s_{1}(x)s_{1}(-l)+s_{2}(x)s_{0}(-l)\right]-\\
         s_{1}(-t)s_{1}(x)s_{2}(-l)-s_{0}(-t)s_{2}(x)s_{2}(-l)=\\
          s_{2}(-t)\left[s_{1}(x)s_{1}(-l)+s_{2}(x)s_{0}(-l)\right]-\\
            s_{2}(-l)\left[s_{1}(-t)s_{1}(x)+s_{0}(-t)s_{2}(x)\right].
      \end{multline}
    Applying (\ref{eq4.6}) again, we get
      $$
      A=s_{2}(-t)s_{2}(x-l)-s_{2}(x-t)s_{2}(-l).
      $$
    \end{proof}

    Applying the identity (\ref{eq9.3}) to the integrand in (\ref{eq9.2}), we obtain
      \begin{multline}\label{eq9.4}
      y(\lambda,x)=\\
      \frac{i}{\lambda^{4}\Delta_{\theta}(\lambda)}
       \left\{\int_{0}^{l}\left[\theta s_{2}(i\lambda(x-l))s_{2}(-i\lambda t)-
        \theta s_{2}(i\lambda(x-t))s_{2}(-i\lambda l)-\right.\right.\\
         \qquad\qquad \left.s_{2}(i\lambda x)s_{2}(i\lambda (l-t))\right]f(t)dt+\\
             \left.\int_{0}^{x}s_{2}(i\lambda(x-t))[s_{2}(i\lambda l)+\theta s_{2}(-i\lambda l)]f(t)dt\right\}.\quad
      \end{multline}

    \begin{theorem}\label{the9.1}
    The resolvent of the operator $L_{\theta}$ has the form
      \begin{multline}\label{eq9.5}
      \left(R_{L_{\theta}}(\lambda^{3})f\right)(x)=
      \frac{i}{\lambda^{4}\Delta_{\theta}(\lambda)}\times\\
        \left\{ \int_{0}^{x}s_{2}(i\lambda(x-t))s_{2}(i\lambda l)f(t)dt\right.-
          \theta\int_{x}^{l}s_{2}(i\lambda(x-t))s_{2}(-i\lambda l)f(t)dt+\\
       \left.\int_{0}^{l}\left[\theta s_{2}(i\lambda(x-l))s_{2}(-i\lambda t)-
          s_{2}(i\lambda x)s_{2}(i\lambda (l-t))\right]f(t)dt\right\},
      \end{multline}
    where $\Delta_{\theta}(\lambda)$ is determined by the relation \emph{(\ref{eq8.10})}.
    \end{theorem}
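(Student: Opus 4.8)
The plan is to start from the already-derived expression (\ref{eq9.4}) for $y(\lambda,x)=\left(R_{L_{\theta}}(\lambda^{3})f\right)(x)$, which was obtained by inserting the kernel identity (\ref{eq9.3}) into (\ref{eq9.2}), and simply to reorganize its integral terms into the compact form (\ref{eq9.5}). Since (\ref{eq9.4}) already encodes the full action of the resolvent on $f$, no fresh analytic input is needed: the argument is a bookkeeping rearrangement of the three integral kernels.

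First I would split the integral over $[0,x]$ appearing in the last line of (\ref{eq9.4}), writing the factor $s_{2}(i\lambda(x-t))\bigl[s_{2}(i\lambda l)+\theta s_{2}(-i\lambda l)\bigr]$ as the sum of $s_{2}(i\lambda(x-t))s_{2}(i\lambda l)$ and $\theta\,s_{2}(i\lambda(x-t))s_{2}(-i\lambda l)$. The first of these is already in the target shape and becomes the term $\int_{0}^{x}s_{2}(i\lambda(x-t))s_{2}(i\lambda l)f(t)\,dt$ of (\ref{eq9.5}).

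Next I would collect the two summands carrying the kernel $s_{2}(i\lambda(x-t))s_{2}(-i\lambda l)$: the contribution $-\theta\int_{0}^{l}$ coming from the first integral of (\ref{eq9.4}) together with the contribution $+\theta\int_{0}^{x}$ just produced by the split. Using the elementary identity $\int_{0}^{x}-\int_{0}^{l}=-\int_{x}^{l}$, these merge into the single term $-\theta\int_{x}^{l}s_{2}(i\lambda(x-t))s_{2}(-i\lambda l)f(t)\,dt$ of (\ref{eq9.5}). The two remaining summands of the first integral of (\ref{eq9.4}), namely $\theta\,s_{2}(i\lambda(x-l))s_{2}(-i\lambda t)$ and $-s_{2}(i\lambda x)s_{2}(i\lambda(l-t))$, contain no factor $s_{2}(i\lambda(x-t))$ and carry over unchanged into the closing $\int_{0}^{l}$ term of (\ref{eq9.5}). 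Matching the resulting expression term by term against (\ref{eq9.5}) completes the identification.

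The only delicate point is the sign-and-limits accounting in the combination $-\theta\int_{0}^{l}+\theta\int_{0}^{x}=-\theta\int_{x}^{l}$; everything else is a direct comparison of kernels. Consequently there is no genuine analytic obstacle here, and the whole proof reduces to this careful rearrangement of (\ref{eq9.4}).
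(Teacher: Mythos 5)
Your proposal is correct and coincides with the paper's own (largely implicit) argument: the paper derives (\ref{eq9.4}) via Lemma \ref{lem9.1} and then passes to (\ref{eq9.5}) by exactly the rearrangement you describe, splitting the $\int_{0}^{x}$ term and merging the two $s_{2}(i\lambda(x-t))s_{2}(-i\lambda l)$ contributions via $\int_{0}^{x}-\int_{0}^{l}=-\int_{x}^{l}$. Your sign-and-limits bookkeeping checks out, so nothing is missing.
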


    Formula (\ref{eq9.5}) allows one to find spectral projections onto eigensubspaces of $L_{\theta}$ as a residue of the resolvent at the zeros of the characteristic function $\Delta_{\theta}(\lambda)$.


\begin{thebibliography}{99}

  \bibitem{uj01} V. P. Havin, \emph{Methods and structure of commutative harmonic analysis }~// Commutative harmonic analysis - 1, Itogi Nauki i Tekhniki. Ser. Sovrem. Probl. Mat. Fund. Napr., vol. 15, VINITI, Moscow, 1987, 6--33.

  \bibitem{uj02} E. C. Titchmarsh, \emph{Eigenfunction Expansions Associated with Second Order Differential Equations}~// Oxford Univ. Press, London, 1962.

  \bibitem{uj03} B. M. Levitan, I. S. Sargsyan, \emph{Introduction to Spectral Theory}~// Amer. Math. Soc., Transl. Math. Monographs, Vol. 39, Providence, RI, 1975. (Russian edition: Nauka, Moscow, 1970).

  \bibitem{uj04} E. C. Titchmarsh, \emph{ An Introduction to the Theory of Fourier Integrals}~// Oxford Univ. Press, 1948.

  \bibitem{uj05} M. A. Naimark, \emph{Linear differential operators}~// Nauka, Moscow 1969.

  \bibitem{uj06} V. A. Zolotarev, \emph{Inverse spectral problem for a third-order differential operator with non-local potential}~// Journal of Differential Equations, \textbf{303}, (2021), 456--481.

  \bibitem{uj07} V.A. Zolotarev, \emph{Inverse scattering problem for a third-order operator with non-local potential}~// arXiv:2201.10784v1 [math CA] 26 Jan. 2022, 1--42.

  \bibitem{uj08} J. Kohlenberg, H. Lundmark, J. Szmigielsku, \emph{The inverse spectral problem for the discrete cubic string}~// Inverse Problems, 23(2007) 99--121.

  \bibitem{uj09} A. Degasperis, D. D. Holm, A. N. W. Hone, \emph{A new integrable equation with peakon solutions}~// Theor. Math. Phys., 133:2 (2002) 1463--1474.

  \bibitem{uj10}  H. Lundmark, J. Szmigielsku, \emph{Multi-peakon solutions of the Degasperis-Procesi equation}~// Inverse Problems, 19(2003) 1241--1245.

  \bibitem{uj11} A. Constantin, R. I. Ivanov, J. Lenells, \emph{Inverse scattering transform for the Degasperis-Procesi equation}~// Nonlinearity, 23 (2010) 2559--2575.

  \bibitem{uj12} A. Constantin, R. I. Ivanov, \emph{Dressing method for the Degasperis-Procesi equation}~// Stud. Appl. Math., 138 (2017) 205--226.

  \end{thebibliography}
  \end{document}